\documentclass[12pt,oneside,a4paper]{amsart}

\usepackage[utf8]{inputenc}
\usepackage{thmtools}
\usepackage{mathtools}
\usepackage{amssymb}

\usepackage[english]{babel}
\usepackage{enumerate}
\usepackage[fixlanguage]{babelbib}

\usepackage{mathrsfs}								
\usepackage{bm}                                     

\usepackage{nameref}
\usepackage{hyperref}
\usepackage[capitalize]{cleveref}
\usepackage{tikz}
\usetikzlibrary{cd}

\selectbiblanguage{english}


\declaretheorem[numberwithin=section]{theorem}
\declaretheorem[numbered=no,name=Theorem]{theorem*}
\declaretheorem[numberlike=theorem]{lemma}

\declaretheorem[numberlike=theorem]{proposition}

\declaretheorem[style=definition,numberlike=theorem]{definition}
\declaretheorem[style=definition,numberlike=theorem]{example}
\declaretheorem[style=remark,numberlike=theorem]{remark}

\newcommand{\RR}{\mathbb{R}}

\DeclarePairedDelimiter{\abs}{\lvert}{\rvert}
\DeclarePairedDelimiter{\norm}{\lVert}{\rVert}
\newcommand{\ad}{\operatorname{ad}}

\title[Lifts to central extensions]{Smooth contact lifts to central extensions of Carnot groups}
\date{August 20, 2025}
\author{Eero Hakavuori}
\address{Department of Mathematics and Statistics, P. O. Box 68 (Pietari Kalmin katu 5), FI-00014 University of Helsinki, Finland.}
\email{eero.hakavuori@helsinki.fi}
\author{Susanna Heikkilä}
\address{Department of Mathematics and Statistics, P.O. Box 35 (MaD), FI-40014 University of Jyväskylä, Finland.}
\email{susanna.a.heikkila@jyu.fi}
\author{Toni Ikonen}
\address{Department of Mathematics, University of Fribourg, Chemin du Musée 23, 1700 Fribourg, Switzerland.}
\email{toni.ikonen@unifr.ch}

\keywords{Carnot groups, central extensions, cocycles, contact lift, contact maps, Pansu differential, Pansu pullback, Rumin complex}
\subjclass[2020]{Primary: 35R03. Secondary: 20F65, 22E30, 53C17, 53C24, 57T10}

\begin{document}
\maketitle
\begin{abstract}
We consider the existence problem of lifting a smooth contact map between Carnot groups to a smooth contact map between central extensions of the original groups. Our main result is a necessary and sufficient criterion formulated using the pullback of any de Rham potential of the codomain central extension 2-cocycle: the Rumin differential of the pullback is in a linear image of the domain central extension 2-cocycle. We also show a necessary criterion using the Pansu pullback: the Pansu pullback of the codomain central extension 2-cocycle and a linear image of the domain central extension 2-cocycle are in the same Lie algebra cohomology class. We prove that the latter criterion is sufficient if the domain group is Lipschitz 1-connected, or if the pullback has maximal weight among Lie algebra $2$-cohomology classes.

\end{abstract}

\section{Introduction}

\subsection{Contact equations and rigidity}
A $C^1$-map $f\colon  G_1 \supset  U_1 \to G_2$ between Carnot groups is a \emph{contact map} if it preserves the horizontal distribution. A contact map is \emph{smooth} if it is $C^{\infty}$-regular. We do not require that contact maps have full rank, nor that they are homeomorphisms.

The contact condition leads to highly non-linear \emph{contact equations}. This makes the existence theory for contact maps delicate and leads to rigidity phenomena on Carnot groups. The rigidity is highlighted by a theorem by Pansu \cite{Pansu-1989-metriques_de_carnoth_caratheodory}: if there exists a full-rank contact diffeomorphism (or, more generally, a quasisymmetry) between Carnot groups, then the Carnot groups are isomorphic. Various authors have contributed to related rigidity results, see for instance \cite{Ambrosio-Kirchheim-2000-rectifiable-sets,magnani-2004-unrectifiability-and-rigidity-in-stratified-groups,Wenger-Young-2014-lipschitz-homotopy-groups-heisenberg} for non-full rank and \cite{capogna-cowling-2006-conformality-and-q-harmonicity,ottazzi-warhust-2011-contact-and-1-QC-on-carnot,Kleiner-Muller-Xie-2021-nonrigid_Carnot_groups} for full rank rigidity results.

Despite these rigidity results, there are powerful tools to construct contact maps in special settings, see e.g.\ \cite{koranyi-reimann-1995-foundations-for-the-theory-of-QC-maps-in-the-heisenberg-group} and \cite{gromov-1996-carnot-from-within}. The lifting problem provides a further tool to generate contact maps.

\subsection{Lifting problem}
Let $ V_1\to G_1\overset{ \pi_1}{\to} H_1$ and $ V_2\to G_2\overset{ \pi_2}{\to} H_2$ be central extensions of Carnot groups. Let $ U_1\subset H_1$ and $ \tilde{U}_1\subset G_1$ be open sets such that $ \pi_1( \tilde{U}_1)= U_1$. A map $ F\colon  \tilde{U}_1\to G_2$ is a \emph{lift} of $ f\colon  U_1\to H_2$ if $ \pi_2\circ  F =  f\circ \pi_1$. That is, if the diagram
\begin{center}
    \begin{tikzcd}
         \tilde{U}_1\rar{ F}\dar{ \pi_1}&  G_2\dar{ \pi_2}\\
         U_1\rar{ f}&  H_2
    \end{tikzcd}
\end{center}
commutes. If both $ f$ and $ F$ are contact maps, we say that $ F$ is a \emph{contact lift} of $ f$. The \emph{lifting problem} asks for necessary and sufficient conditions on the existence of a contact lift $ F$ of $ f$.

In this manuscript, we find analytic and algebraic characterizations of the lifting problem.

\subsection{Existence of contact lifts to central extensions}\label{sec-equivalent-conditions}
Central extensions can be understood in terms of $2$-cocycles, i.e., closed left-invariant $2$-forms which determine the Carnot structure of the central extension. Therefore the lifting problem is linked to the associated cocycles. To formulate our main result, let $ V_1\to G_1\overset{ \pi_1}{\to} H_1$ and $ V_2\to G_2\overset{ \pi_2}{\to} H_2$ be fixed central extensions of Carnot groups given by $2$-cocycles $ \rho_1$ and $ \rho_2$, see \cref{sec-preliminary-central-extension} for details.

Consider any smooth potential for the 2-cocycle $ \rho_2$, i.e., a 1-form $ \alpha_2\in\Omega^1( H_2)$ such that $d \alpha_2= \rho_2$. A natural path lifting criterion for the existence of a contact lift can be rephrased in terms of $( f\circ \pi_1)^* \alpha_2$ and the Rumin differential $d_c$ in $G_1$. By parsing $d_c$ through the central extension $ V_1\to G_1\to H_1$, we find our main characterization theorem.
\begin{theorem}\label{thm-lift-iff-potential-pullback-rumin-differential}
    When $ U_1 \subset  H_1$ is a simply connected domain, a smooth contact map $ f\colon H_1\supset U_1\to H_2$ admits a smooth contact lift $ F\colon G_1\supset \pi_1^{-1}( U_1)\to G_2$ if and only if there exists a linear map $L\colon V_1\to V_2$ such that $d_c f^* \alpha_2 = L\circ \pi_{E_0} \rho_1$.
\end{theorem}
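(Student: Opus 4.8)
The plan is to reduce the existence of a contact lift to a horizontal primitive problem on $G_1$, and then to transfer that problem down to $H_1$ through the central extension $V_1\to G_1\to H_1$. I first record the geometry. On each $G_i$ there is a canonical left-invariant $V_i$-valued $1$-form $\beta_i$ dual to the central directions: it annihilates the horizontal distribution $\mathcal{H}_{G_i}$ and satisfies $d\beta_i=\pi_i^*\rho_i$. Since $G_i$ is simply connected and $d(\pi_i^*\alpha_i-\beta_i)=0$, a primitive of $\pi_i^*\alpha_i-\beta_i$ gives a global diffeomorphism $G_i\cong H_i\times V_i$ in which $\beta_i=dw_i+\pi_i^*\alpha_i$, where $w_i$ is the $V_i$-coordinate and $\alpha_1$ is a fixed de Rham potential of $\rho_1$. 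In these coordinates every lift of $f$ has the form $F=(f\circ\pi_1,\phi)$ with $\phi\colon\pi_1^{-1}(U_1)\to V_2$ smooth, and $F^*\beta_2=d\phi+\pi_1^*f^*\alpha_2$. Since $\operatorname{Ann}\mathcal{H}_{G_2}$ is generated by $\pi_2^*(\operatorname{Ann}\mathcal{H}_{H_2})$ and $\beta_2$, and $f$ is already contact, $F$ is a contact lift if and only if $F^*\beta_2$ annihilates $\mathcal{H}_{G_1}$, i.e.\ iff $(\pi_1^*f^*\alpha_2)|_{\mathcal{H}_{G_1}}=-d\phi|_{\mathcal{H}_{G_1}}$. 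As $\pi_1^{-1}(U_1)$ deformation retracts onto the simply connected $U_1$, such a smooth $\phi$ exists if and only if $d_c(\pi_1^*f^*\alpha_2)=0$ in $E_0^2(G_1)$.

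Next I would establish a parsing lemma: for a smooth $V_2$-valued $1$-form $\omega$ on $U_1$, one has $d_c(\pi_1^*\omega)=0$ in $E_0^2(G_1)$ if and only if there is a linear map $L\colon V_1\to V_2$ with $d_c\omega=L\circ\pi_{E_0}\rho_1$. For the "if" direction I would use naturality of $d_c$ under the contact submersion $\pi_1$ together with the identities $\pi_{E_0}\rho_1=d_c\alpha_1$ and $d_c(\pi_1^*\alpha_1)=0$ (the latter because $(\pi_1^*\alpha_1)|_{\mathcal{H}_{G_1}}$ agrees with the restriction of the closed form $\pi_1^*\alpha_1-\beta_1$, as $\beta_1$ annihilates $\mathcal{H}_{G_1}$): then $d_c(\pi_1^*\omega)=\pi_1^*(d_c\omega)=\pi_1^*(L\circ\pi_{E_0}\rho_1)=L\circ d_c(\pi_1^*\alpha_1)=0$. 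For the "only if" direction I would argue with periods along horizontal curves: on the simply connected $\pi_1^{-1}(U_1)$, $d_c(\pi_1^*\omega)=0$ produces a global smooth $\phi$ with $d\phi|_{\mathcal{H}_{G_1}}=(\pi_1^*\omega)|_{\mathcal{H}_{G_1}}$; a horizontal loop $\gamma$ in $U_1$ with $\int_\gamma\alpha_1=0$ lifts to a horizontal \emph{loop} in $G_1$ (its vertical displacement being $-\int_\gamma\alpha_1$), along which $\int_\gamma\omega$ is an integral of $d\phi$ and hence vanishes. Thus the linear map $\gamma\mapsto\int_\gamma\omega$ on horizontal $1$-cycles of $U_1$ factors through the $V_1$-valued linear map $\gamma\mapsto\int_\gamma\alpha_1$, giving $L$ with $\int_\gamma(\omega-L\circ\alpha_1)=0$ for every horizontal loop; since $U_1$ is simply connected this forces $d_c(\omega-L\circ\alpha_1)=0$, i.e.\ $d_c\omega=L\circ d_c\alpha_1=L\circ\pi_{E_0}\rho_1$. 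Applying the parsing lemma with $\omega=f^*\alpha_2$ and combining with the first paragraph proves the theorem. In the "if" direction one can also exhibit the lift directly: solve $d_c\psi=f^*\alpha_2-L\circ\alpha_1$ on the simply connected $U_1$ and set $\phi=-\psi\circ\pi_1+L\circ w_1$; using $\beta_1|_{\mathcal{H}_{G_1}}=0$ one checks $d\phi|_{\mathcal{H}_{G_1}}=-(\pi_1^*f^*\alpha_2)|_{\mathcal{H}_{G_1}}$, so $F=(f\circ\pi_1,\phi)$ is a contact lift.

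I expect the parsing lemma, and above all its "only if" direction, to be the main obstacle: one must identify the horizontal loops of $H_1$ arising as projections of horizontal loops of $G_1$ as exactly those with vanishing $\alpha_1$-period, verify that "vanishing horizontal periods on a simply connected domain" is equivalent to $d_c$-exactness — so that the primitives obtained are genuinely \emph{smooth}, which is what the $d_c$-formulation secures over bare path integration — and check that the factorization through $V_1$ is linear and well defined on $1$-cycles. The supporting facts I would take from the preliminaries are: the structure of $\beta_i$ and of $\operatorname{Ann}\mathcal{H}_{G_i}$ for a central extension of Carnot groups; that the Rumin complex $E_0^\bullet$ computes de Rham cohomology and that $d_c$ is natural under contact maps; and the identity $d_c\alpha_1=\pi_{E_0}\rho_1$ relating a de Rham potential of the cocycle $\rho_1$ to its projected Rumin class.
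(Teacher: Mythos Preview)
Your overall strategy is sound and matches the paper's reduction via horizontal exactness (the paper's Proposition~5.4): a contact lift exists iff $\pi_1^*f^*\alpha_2$ is horizontally exact on $\pi_1^{-1}(U_1)$, equivalently $d_c(\pi_1^*f^*\alpha_2)=0$. Your parsing lemma is the analogue of the paper's Lemma~7.2, but you argue it differently.

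One step is wrong: the Rumin differential is \emph{not} natural under the contact submersion $\pi_1$. Indeed $\pi_1^*(E_0^2(H_1))\not\subset E_0^2(G_1)$ in general --- for instance $\pi_1^*\rho_1^j=-d_0\theta^j\in\operatorname{im}d_0$ even when $\rho_1^j\in E_0^2(H_1)$ --- so the asserted identity $d_c(\pi_1^*\omega)=\pi_1^*(d_c\omega)$ cannot hold as an equality in $E_0^2(G_1)$. The paper's Lemma~7.1 computes precisely this defect. Fortunately your alternative direct construction for the ``if'' direction (solve for $\psi$ downstairs and set $\phi=-\psi\circ\pi_1+L\circ w_1$) is correct and does not use naturality, so the gap is local to your first argument.

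For the ``only if'' direction your period argument is genuinely different from the paper. The paper normalizes $\rho_1$ to have orthonormal components in $E_0$ (via the isomorphism of its Lemma~3.4) and then performs an explicit computation of $\pi_E\pi^*\omega$ (Lemma~7.1), reading off that $d_c(\pi^*\omega)=0$ forces the coefficients $c_j=\langle d_c\omega,\hat\rho^j\rangle$ to be constant. Your route --- factor the $\omega$-period map on horizontal loops through the $\alpha_1$-period map --- is more conceptual and bypasses the Rumin algebra, at the cost of having to justify that the factorization $L$ is $\mathbb{R}$-linear (not merely additive on concatenations) and independent of basepoint. One clean way: observe that $L(v)=\phi(g\exp v)-\phi(g)$ is smooth in $v$ and additive, hence linear; basepoint-independence is the content of the paper's Lemma~6.5. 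Your supporting identity $d_c\alpha_1=\pi_{E_0}\rho_1$ is correct but is not stated in the paper; it holds because for left-invariant closed $\rho_1$ one checks $\pi_E\rho_1=\pi_{E_0}\rho_1$ (write $\rho_1=\pi_{E_0}\rho_1+d_0\mu$ and use $\pi_E\mu=0$ for left-invariant $\mu\in(\ker d_0)^\perp$).
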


Here $\pi_{E_0} \colon \Omega^{2}(  U_1;  V_1 ) \rightarrow E_0^{2}(  U_1;  V_1 )$ is the orthogonal projection to the Rumin complex $E_0(  U_1;  V_1 )$ acting componentwise on vector-valued forms, see \cref{sec-rumin-complex}.
We also note that $d_c  f^{*}(  \alpha_2' -  \alpha_2 ) = 0$ if $ \alpha_2'$ and $ \alpha_2$ are different potentials for $ \rho_2$ so the conclusion of \cref{thm-lift-iff-potential-pullback-rumin-differential} is independent of the specific potential we use.

The existence of lifts also poses a constraint that does not require computation of the Rumin differential but instead relates the Lie algebra cohomology classes of the extension cocycles via the Pansu pullback $ { f}_P^*$ defined in \cite{Kleiner-Muller-Xie-2021-pansu_pullback}.

\begin{theorem}\label{thm-if-lift-cohomology-constant}
If $ U_1 \subset  H_1$ is a domain and a smooth contact map $ f\colon H_1\supset U_1\to H_2$ admits a smooth contact lift $ F\colon G_1\supset \pi_1^{-1}( U_1)\to G_2$, then $ { f}_P^* \rho_2 =  \varphi \circ  \rho_1 + d_0 \omega$ for some graded linear map $ \varphi \colon  V_1 \rightarrow  V_2$ and $ \omega \in \Omega^{1}(  U_1;  V_2 )$.
\end{theorem}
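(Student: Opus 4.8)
The plan is to push the structure equation of the codomain extension through the lift and read off its lowest–weight component. Let $\theta_2\in\Omega^1(G_2;V_2)$ be the left-invariant connection form of $V_2\to G_2\overset{\pi_2}{\to}H_2$, i.e.\ the projection $\mathfrak g_2\to V_2$ made left-invariant; it satisfies the structure equation $d\theta_2=-\pi_2^*\rho_2$. Since $F$ is a lift, $\pi_2\circ F=f\circ\pi_1$, so pulling back gives
\[
d(F^*\theta_2)=-F^*\pi_2^*\rho_2=-\pi_1^*(f^*\rho_2)\qquad\text{on }\pi_1^{-1}(U_1).
\]
Both $F$ and $\pi_1$ are contact, so their ordinary pullbacks preserve the weight filtration of differential forms and induce the Pansu pullbacks on the associated graded, which is the defining property of $f_P^*$ from \cite{Kleiner-Muller-Xie-2021-pansu_pullback}. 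Decomposing the identity above by homogeneous weight and extracting the component carrying the weight of $\rho_2$ (treating each weight component of $\rho_2$ separately if $\rho_2$ is not homogeneous), the bottom terms yield
\[
\pi_1^*(f_P^*\rho_2)=-d_0\bigl(F_P^*\theta_2\bigr)\qquad\text{on }\pi_1^{-1}(U_1),
\]
where $d_0$ is the weight–preserving (fibrewise Chevalley–Eilenberg) part of $d$.

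Next I would descend this to $U_1$. Fix a graded splitting $\mathfrak g_1=\mathfrak h_1\oplus V_1$, with left-invariant connection form $\theta_1\in\Omega^1(G_1;V_1)$, so $d_0\theta_1=-\pi_1^*\rho_1$. Because $F$ lifts $f$, the graded Pansu differential $D_pF\colon\mathfrak g_1\to\mathfrak g_2$ maps $V_1$ into $V_2$ and descends to $D_{\pi_1(p)}f$; hence at each point it restricts to a graded linear map $\varphi_p:=D_pF|_{V_1}\colon V_1\to V_2$, and $(F_P^*\theta_2)_p$ splits as a $\pi_1$-horizontal part plus $\varphi_p\circ(\theta_1)_p$. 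Feeding this into the last display, using that $d_0$ is algebraic together with the two structure equations, and pulling back along local smooth sections of the bundle $\pi_1$, one obtains on $U_1$ an identity of the shape
\[
f_P^*\rho_2=\varphi_\bullet\circ\rho_1+d_0\omega',
\]
for an \emph{a priori} point-dependent family $\varphi_\bullet$ of graded maps and some $\omega'\in\Omega^1(U_1;V_2)$. Equivalently, the morphism of central extensions determined by $D_pF$ gives $(\varphi_p)_*[\rho_1]=(D_{\pi_1(p)}f)^*[\rho_2]=[f_P^*\rho_2|_{\pi_1(p)}]$ in $d_0$-cohomology; in particular this class depends only on $\pi_1(p)$.

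The heart of the matter — and the step I expect to be the main obstacle — is upgrading $\varphi_\bullet$ to a single graded map $\varphi\colon V_1\to V_2$. By the previous display this is exactly the assertion that $q\mapsto[f_P^*\rho_2|_q]$ is locally constant as a map into $d_0$-cohomology, which is a genuine constraint: it can fail for a general contact map $f$, so the argument must use that an honest smooth contact lift exists and not merely pointwise Pansu data. I would establish it by tracking $p\mapsto D_pF|_{V_1}$ along horizontal curves in $\pi_1^{-1}(U_1)$: differentiating the relation ``$D_pF$ is a graded homomorphism'' along a horizontal lift and invoking the contact equations satisfied by the smooth map $F$ shows that the horizontal variation of $\varphi_\bullet$, after composition with $\rho_1$, is $d_0$-exact, so $[f_P^*\rho_2|_q]$ is constant along horizontal curves and hence on the connected set $U_1$. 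Alternatively one can reduce to \cref{thm-lift-iff-potential-pullback-rumin-differential} on a simply connected open cover of $U_1$: the resulting linear maps there coincide modulo $\{L:L\circ\pi_{E_0}\rho_1=0\}$, hence represent a locally constant — so constant — coset, and translating the resulting global Rumin-differential identity back through the extension gives the claim. Once $\varphi$ is fixed, $(\varphi_\bullet-\varphi)\circ\rho_1$ is fibrewise $d_0$-exact with a primitive depending smoothly (indeed linearly) on the point, and absorbing it into $\omega'$ produces $f_P^*\rho_2=\varphi\circ\rho_1+d_0\omega$ with $\omega\in\Omega^1(U_1;V_2)$, as required.
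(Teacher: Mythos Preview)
Your framework is sound up through the pointwise identity: pulling back the structure equation, extracting the weight-zero part, and reading off $f_P^*\rho_2=\varphi_p\circ\rho_1+d_0\omega'$ with $\varphi_p=D_pF|_{V_1}$ is essentially how the paper arrives at the same pointwise statement (it packages this as \cref{lemma-homomorphism-central-extension} applied to the commutative square built from $d_PF$). You have also correctly isolated the crux: promoting $\varphi_p$ to a single graded $\varphi$.

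However, neither of your proposed resolutions of that crux is carried through, and the first one is not obviously viable. Differentiating $f_P^*\rho_2=\varphi_\bullet\circ\rho_1+d_0\omega'$ along a horizontal field $X$ gives $(X\varphi_\bullet)\circ\rho_1 = X(f_P^*\rho_2)-d_0(X\omega')$; you need $X(f_P^*\rho_2)$ to be $d_0$-exact, but all you know is that it lies in $\ker d_0$, and those are different spaces in general. Invoking ``the contact equations satisfied by $F$'' does not close this gap without further argument. Your alternative via \cref{thm-lift-iff-potential-pullback-rumin-differential} is plausible but indirect, and you would still have to translate the Rumin identity $d_c f^*\alpha_2=L\circ\pi_{E_0}\rho_1$ back into the Pansu-pullback identity you want, which is not immediate.

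The paper bypasses the whole issue with a short metric argument that gives \emph{actual} constancy of $\varphi_p$, not just constancy of its $d_0$-cohomology class. After reducing to $\operatorname{rank}(G_2)=\operatorname{rank}(H_2)$ via \cref{lemma-lift-image-abelian-component}, \cref{lemma-lipschitz-lift-is-a-homomorphism-on-fibers} shows that $F(gk)=F(g)\Phi(k)$ for a single Lie group homomorphism $\Phi\colon V_1\to V_2$ independent of $g$: one checks that $g\mapsto F(g)^{-1}F(gk)$ is locally Lipschitz into $V_2$, and since under the rank hypothesis $V_2$ contains no horizontal directions, any such Lipschitz map is locally constant, hence constant on the connected domain. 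Then $\varphi_p=d_PF(p)|_{V_1}=\lim_{\lambda\to0^+}\delta_{1/\lambda}\circ\Phi\circ\delta_\lambda$ is visibly independent of $p$, and the pointwise identity becomes the global one with $\omega=d_0^{-1}(\varphi\circ\rho_1-f_P^*\rho_2)$. This is both shorter and stronger than what you sketch; I would recommend adopting it.
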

Here $d_0 \omega$ is the Lie algebra differential (also known as the Chevalley--Eilenberg differential) of the form $ \omega$, see \cref{sec:left-invariant-d0-cocycle}.  In particular, $ { f}_P^* \rho_2 =  \varphi \circ  \rho_1 + d_0 \omega$ implies that the Lie algebra cohomology class of $ { f}_P^* \rho_2$ is left-invariant.

\subsection{Sufficient conditions for existence of contact lifts}
As in \cref{sec-equivalent-conditions}, let $ V_1\to G_1\overset{ \pi_1}{\to} H_1$ and $ V_2\to G_2\overset{ \pi_2}{\to} H_2$ be central extensions of Carnot groups given by $2$-cocycles $ \rho_1$ and $ \rho_2$, respectively. We present two positive results where the criterion of \cref{thm-if-lift-cohomology-constant} is, in fact, both necessary and sufficient.

The first is when the domain group is Lipschitz $1$-connected.

\begin{theorem}\label{thm-trivial-homotopy-implies-lift}
    Suppose that $ { f}_P^* \rho_2 =  \varphi \circ  \rho_1 + d_0 \omega$ for a graded linear map $ \varphi \colon  V_1 \rightarrow  V_2$ and a 1-form $ \omega \in \Omega^{1}(  U_1;  V_2 )$.
    If $ H_1$ is Lipschitz $1$-connected and $ U_1$ is simply connected, then $ f$ admits a smooth contact lift $ F\colon G_1\supset \pi_1^{-1}( U_1)\to G_2$.
\end{theorem}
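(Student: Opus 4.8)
The plan is to construct the lift $F$ explicitly by horizontal path‑lifting and to reduce its existence to a single loop‑integral identity that the hypothesis is tailored to verify; the criterion of \cref{thm-lift-iff-potential-pullback-rumin-differential} then provides a second, equivalent packaging of the same argument.

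Fix a base point $x_0\in\pi_1^{-1}(U_1)$ and a smooth potential $\alpha_1\in\Omega^{1}(H_1;V_1)$ with $d\alpha_1=\rho_1$. Since $\pi_1\colon G_1\to H_1$ restricts to a principal $V_1$-bundle over the simply connected domain $U_1$, the set $\pi_1^{-1}(U_1)$ is diffeomorphic to $U_1\times V_1$; in particular it is connected, hence horizontally connected, and simply connected. Define $F$ by requiring $F(x_0)$ to be a chosen lift of $f(\pi_1(x_0))$ and, for any horizontal curve from $x_0$ to $x$ in $\pi_1^{-1}(U_1)$, letting $F$ trace the unique horizontal curve in $G_2$ over $f\circ\pi_1$ of that curve starting at $F(x_0)$. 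This prescription carries horizontal curves to horizontal curves by construction, and it is independent of the chosen curve — so $F$ is a well‑defined map — provided the holonomy around every horizontal loop vanishes. Unwinding the Carnot structures on the two central extensions from \cref{sec-preliminary-central-extension}, this amounts exactly to the implication: for every horizontal loop $\gamma$ in $U_1$ with $\oint_\gamma\alpha_1=0$ one has $\oint_\gamma f^*\alpha_2=0$.

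To prove this implication, let $\gamma$ be such a loop. Using Lipschitz $1$-connectedness of $H_1$ together with simple connectedness of $U_1$, fill $\gamma$ by a Lipschitz map $\sigma\colon D^2\to U_1$. Stokes' theorem gives $\oint_\gamma f^*\alpha_2=\int_{D^2}\sigma^*f^*\rho_2$. Since $\sigma$ is Lipschitz into a Carnot group, its differential is almost everywhere valued in the horizontal layer, so $\int_{D^2}\sigma^*\beta$ depends only on the weight-$2$ component of a $2$-form $\beta$; and because $f$ is a contact map, the weight-$2$ components of $f^*\rho_2$ and of the Pansu pullback $f_P^*\rho_2$ coincide — on horizontal bivectors both are given by $(X,Y)\mapsto\rho_2(DfX,DfY)$ — so $\int_{D^2}\sigma^*f^*\rho_2=\int_{D^2}\sigma^*f_P^*\rho_2$ by the integration‑over‑Lipschitz‑chains property of the Pansu pullback from \cite{Kleiner-Muller-Xie-2021-pansu_pullback}. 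Inserting the hypothesis $f_P^*\rho_2=\varphi\circ\rho_1+d_0\omega$, the form $d_0\omega$ has no weight-$2$ component and so integrates to zero against $\sigma$, leaving $\int_{D^2}\sigma^*f_P^*\rho_2=\varphi\bigl(\int_{D^2}\sigma^*\rho_1\bigr)=\varphi\bigl(\oint_\gamma\alpha_1\bigr)=\varphi(0)=0$. Hence $\oint_\gamma f^*\alpha_2=0$, as needed, so $F$ is well defined. Smoothness of $F$ then follows from smoothness of $f$, since the $V_2$-component of $F$ along a horizontal curve solves a linear ODE with smooth coefficients determined by $f^*\alpha_2$; being smooth and carrying horizontal curves to horizontal curves, $F$ is the desired smooth contact lift. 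Equivalently, the loop computation above certifies the criterion of \cref{thm-lift-iff-potential-pullback-rumin-differential} with $L=\varphi$: parsing the Rumin differential $d_c$ through $V_1\to G_1\to H_1$ as in the proof of that theorem reduces $d_c f^*\alpha_2=\varphi\circ\pi_{E_0}\rho_1$ to the same weight-$2$ comparison of $f^*\rho_2$ and $f_P^*\rho_2$, using that $\pi_{E_0}$ annihilates the $d_0$-exact term $d_0\omega$.

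The delicate point — and where the hypothesis is used as genuinely more than the cohomological necessary condition of \cref{thm-if-lift-cohomology-constant} — is the equality $\int_{D^2}\sigma^*f^*\rho_2=\int_{D^2}\sigma^*f_P^*\rho_2$ on a merely Lipschitz filling: one must justify, via almost‑everywhere Pansu differentiability and the integration theory for differential forms against Lipschitz chains into Carnot groups of \cite{Kleiner-Muller-Xie-2021-pansu_pullback}, both that a Lipschitz $2$-chain only pairs with the weight-$2$ part of a form and that the ordinary and Pansu pullbacks of $\rho_2$ share that part because $f$ is contact. Lipschitz $1$-connectedness of $H_1$ is precisely what makes every horizontal loop the boundary of such a Lipschitz chain, hence what converts the cohomological equality $f_P^*\rho_2=\varphi\circ\rho_1+d_0\omega$ into the vanishing of the holonomy obstructing the lift.
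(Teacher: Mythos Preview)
Your overall strategy matches the paper's --- reduce to a loop-integral condition and verify it by filling loops with Lipschitz disks and applying a Pansu--Stokes argument --- but there is a genuine gap and a false intermediate claim.

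The gap is in the sentence ``Using Lipschitz $1$-connectedness of $H_1$ together with simple connectedness of $U_1$, fill $\gamma$ by a Lipschitz map $\sigma\colon D^2\to U_1$.'' Lipschitz $1$-connectedness of $H_1$ gives a $\lambda L$-Lipschitz filling of an $L$-Lipschitz loop, but the image of that filling lies in a ball of radius $\sim\lambda\,\ell(\gamma)$ in $H_1$, not in $U_1$; simple connectedness of $U_1$ gives only a continuous (or smooth) filling, for which your horizontal-bivector and Pansu-pullback identities fail. When $U_1\subsetneq H_1$ and $\gamma$ is long, your filling leaves $U_1$ and $f$, $\omega$, and the hypothesis $f_P^*\rho_2=\varphi\circ\rho_1+d_0\omega$ are no longer available on $\sigma(D^2)$. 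The paper repairs this by first localizing (\cref{lemma-local-lift-to-global-lift,lemma-vertical-invariance-of-horizontal-exactness}) to a ball $B(h,r)$ with $B(h,\tfrac{3\lambda r}{2})\subset U_1$, and then cutting an arbitrary loop into short loops $\gamma_j$ of length at most $3r$, so that each Lipschitz filling stays in $B(h,\tfrac{3\lambda r}{2})\subset U_1$. You need this step.

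Separately, the claim ``the form $d_0\omega$ has no weight-$2$ component'' is false: for instance in $\mathbb{H}_1$ one has $d_0Z^*=-X^*\wedge Y^*$, which is pure weight $2$. The correct reason that $\int_{D^2}\sigma^*(d_0\omega)=0$ is that for a Lipschitz $\sigma$ into a Carnot group one has $\sigma^*=\sigma_P^*$ on forms and $\sigma_P^*(d_0\omega)=d_0(\sigma_P^*\omega)=0$ because $d_0$ vanishes on $\mathbb{R}^2$; this is exactly \cref{lemma-stokes}. Finally, your closing ``equivalently'' paragraph asserting $d_cf^*\alpha_2=\varphi\circ\pi_{E_0}\rho_1$ directly is not justified by what precedes it: the identity $\pi_{E_0}(d_0\omega)=0$ is correct, but going from the loop computation (or from $f_P^*\rho_2=\varphi\circ\rho_1+d_0\omega$) to the Rumin equation requires the nontrivial computation of $\pi_E$ through the extension carried out in \cref{lemm:piE:projection}, or the weight argument of \cref{thm-max-weight-implies-lift}; it is not a mere repackaging.
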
 
    The strategy of the proof is to convert the condition $ { f}_P^* \rho_2 =  \varphi \circ  \rho_1 + d_0 \omega$ on 2-cocycles to the condition in \Cref{thm-lift-iff-potential-pullback-rumin-differential} using homotopies. Lipschitz $1$-connectivity gives control on the diameter  of the area sweeped out by the homotopy. This is crucial when $ U_1$ is a proper subdomain. Recall that $ H_1$ is Lipschitz $1$-connected if there exists $\lambda > 0$ such that, for every $L > 0$, every $L$-Lipschitz map $\gamma \colon \mathbb{S}^1 \rightarrow  H_1$ from the circle $\mathbb{S}^1$ admits a $\lambda L$-Lipschitz extension $u \colon \mathbb{D} \rightarrow  H_1$ to the disk $\mathbb{D}$.

    The Carnot groups $ H_1$ to which \cref{thm-trivial-homotopy-implies-lift} applies include the Euclidean spaces, the Heisenberg groups $\mathbb{H}_n$ for $n \geq 2$ \cite{Allcock:98}, the Allcock groups \cite{Magnani-2010-contact-equations}, and jet spaces $\mathcal{J}^{k}( \mathbb{R}^n )$ for $k \geq 1$ and $n \geq 2$ \cite{Wen:You:10}, and their direct products. 

    The second result is when the 2-cocycle on the target space has large weight in comparison to the non-trivial $2$-cohomology classes in the domain.
\begin{theorem}\label{thm-max-weight-implies-lift}
    Suppose that $ { f}_P^* \rho_2 =  \varphi \circ  \rho_1 + d_0 \omega$ for a graded linear map $ \varphi \colon  V_1 \rightarrow  V_2$ and a 1-form $ \omega \in \Omega^{1}(  U_1;  V_2 )$.
    If the weight of $ \rho_2$ is equal to or greater than the maximal weight of non-trivial $2$-cocycles in the Rumin complex $E_0$ of $ H_1$ and $ U_1$ is simply connected, then $ f$ admits a smooth contact lift $ F\colon G_1\supset \pi_1^{-1}( U_1)\to G_2$.
\end{theorem}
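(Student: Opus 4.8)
The plan is to reduce to the criterion of \cref{thm-lift-iff-potential-pullback-rumin-differential}: since $U_1$ is simply connected, it suffices to produce a linear map $L\colon V_1\to V_2$ with $d_c f^*\alpha_2 = L\circ\pi_{E_0}\rho_1$, and the natural candidate is $L=\varphi$. The first step is to decompose $d_c f^*\alpha_2\in E_0^2(U_1;V_2)$ by weight. As $\rho_2$ is homogeneous of weight $\operatorname{wt}(\rho_2)$ and $f$, being contact, carries the horizontal distribution of $H_1$ into that of $H_2$ and hence respects the associated flags, the ordinary pullback $f^*\rho_2 = d f^*\alpha_2$ has no components of weight below $\operatorname{wt}(\rho_2)$ and its weight-$\operatorname{wt}(\rho_2)$ component is the Pansu pullback $f_P^*\rho_2$; this is the computation already behind \cref{thm-if-lift-cohomology-constant}. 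Running this through the Rumin construction --- and checking that the correction terms in the Rumin lift of $f^*\alpha_2$ also lie in weights $\ge\operatorname{wt}(\rho_2)$ --- shows that $d_c f^*\alpha_2$ has no components of weight below $\operatorname{wt}(\rho_2)$ and that its weight-$\operatorname{wt}(\rho_2)$ component equals $\pi_{E_0}(f_P^*\rho_2)$.

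Because $\pi_{E_0}$ annihilates the image of $d_0$ by the construction of the Rumin complex (\cref{sec-rumin-complex}), the hypothesis $f_P^*\rho_2 = \varphi\circ\rho_1 + d_0\omega$ gives $\pi_{E_0}(f_P^*\rho_2) = \varphi\circ\pi_{E_0}\rho_1$, which is homogeneous of weight $\operatorname{wt}(\rho_2)$ since $\varphi$ is graded. Hence $\Xi := d_c f^*\alpha_2 - \varphi\circ\pi_{E_0}\rho_1 \in E_0^2(U_1;V_2)$ is supported entirely in weights $>\operatorname{wt}(\rho_2)$. Moreover $\Xi$ is $d_c$-exact: $d_c f^*\alpha_2$ is the Rumin differential of the class of $f^*\alpha_2$, while $\pi_{E_0}\rho_1$ is $d_c$-closed --- because $\rho_1$ is closed and left-invariant --- and hence $d_c$-exact, $H_1$ being contractible. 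Everything thus reduces to proving $\Xi = 0$.

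This last step is where the maximal-weight hypothesis is used, and it is the part I expect to be the main obstacle. The difficulty is that $\operatorname{wt}(\rho_2)\ge N$, with $N$ the maximal weight of a non-trivial $2$-cocycle in the Rumin complex $E_0$ of $H_1$, is an algebraic fact about the Lie algebra of $H_1$, whereas $\Xi$ is a $d_c$-exact form over the proper subdomain $U_1$ with values in $V_2$. To bridge this I would show that every $d_c$-exact $2$-form in $E_0^2(U_1;V_2)$ has weight at most $N$. The starting point is that $E_0^1$ consists of horizontal covectors and coincides with the first cohomology of the Lie algebra of $H_1$, so the Rumin differential on $E_0^1$ has vanishing zeroth-order (algebraic) part; a refinement controlling the remaining, strictly weight-raising part of $d_c\colon E_0^1\to E_0^2$ via the weight filtration of $(E_0,d_c)$ should bound the weights occurring among $d_c$-exact $2$-forms by the top weight $N$ carried by a non-trivial $2$-cocycle. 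Granting this, $\Xi = d_c(\pi_{E_0}f^*\alpha_2)$ is $d_c$-exact of weight $>\operatorname{wt}(\rho_2)\ge N$, which forces $\Xi=0$. The two genuinely technical points are therefore the Rumin bookkeeping in the first step and this weight bound; the latter --- transferring the cohomological quantity $N$ to a constraint on $d_c$ over $U_1$ --- is the crux.

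With $\Xi=0$ we get $d_c f^*\alpha_2 = \varphi\circ\pi_{E_0}\rho_1$, so $L=\varphi$ satisfies the hypothesis of \cref{thm-lift-iff-potential-pullback-rumin-differential}, and $f$ admits the desired smooth contact lift $F\colon G_1\supset\pi_1^{-1}(U_1)\to G_2$.
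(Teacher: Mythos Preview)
Your route differs from the paper's in a substantive way. The paper proves the identity
$d_c f^*\alpha_2 = \pi_{E_0}\pi_E f_P^*\rho_2$
by invoking the Kleiner--M\"uller--Xie center-of-mass mollification (the approximation theorem \cite[Theorem~1.3]{Kleiner-Muller-Xie-2022-convolution_trick}), precisely because the Pansu pullback does not commute with $d$; it then runs the same weight bookkeeping you describe on $f_P^*\rho_2$ (your $\pi_{E_0}$ step and the weight bound) to reach $\pi_{E_0}\rho_1$. You instead try to bypass the mollification by asserting that the \emph{ordinary} pullback $f^*\rho_2$ already has weight $\ge\operatorname{wt}(\rho_2)$ with lowest-weight part $f_P^*\rho_2$. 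That claim is equivalent to saying that for a smooth contact map the left-trivialized differential preserves the weight filtration and its associated graded is the Pansu differential. This is plausible (and provable by induction on the layer, using $d\circ f^*=f^*\circ d$ and injectivity of $d_0$ on non-horizontal $1$-forms), but it is \emph{not} ``the computation already behind \cref{thm-if-lift-cohomology-constant}'': that proof assumes a lift $F$ exists and compares the Pansu derivatives of $F$ and $f$ via \cref{lemma-common-derivative-extension-is-necessary}; it never establishes the filtration property for $f$ alone. So your first step needs an independent argument, and as written it is a genuine gap.

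Your final step also reflects a misreading of the hypothesis. In the paper's proof the quantity $N$ is simply $\max\{\operatorname{wt}(\tau):0\neq\tau\in E_0^2\}$, the top weight occurring in $E_0^2$ itself, not the top weight of a nontrivial cohomology class. Consequently, once $\Xi\in E_0^2(U_1;V_2)$ has weight $>\operatorname{wt}(\rho_2)\geq N$, the conclusion $\Xi=0$ is immediate: $E_0^2$ contains no nonzero element of weight exceeding $N$. The $d_c$-exactness of $\Xi$ is irrelevant, and your proposed ``refinement controlling the strictly weight-raising part of $d_c$'' is unnecessary. With the correct reading of $N$ your last step becomes trivial; with your reading of $N$ (cohomologically nontrivial cocycles) the statement you are trying to prove about $d_c$-exact forms is not obviously true and would itself need justification.
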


    \cref{thm-max-weight-implies-lift} utilizes the recent center-of-mass mollification due to Kleiner--Müller--Xie \cite{Kleiner-Muller-Xie-2022-convolution_trick} and standard properties of the Rumin complex. The mollification is used to compensate the fact that in general the Pansu pullback does not commute with the exterior or the Rumin differential.

    \Cref{thm-max-weight-implies-lift} can be applied, for instance, to analyze lifts of contact maps between real filiform groups. We recall that when the step is at least two, the filiform group is not Lipschitz $1$-connected. See \Cref{rem-factorization-of-lifts-filiform} for further discussion.

\subsection{Lifting problem in the literature}
In general, the geometry of both the domain and codomain strongly affect the lifting problem. However, when the domain is one-dimensional, there are no obstructions to existence of contact lifts, and the lifts are well-understood, e.g., by control theory (see \cite[Section 8]{Agrachev-Barilari-Boscain-2020-a-comprehensive-introduction}). When the domain is higher-dimensional, only special cases of the lifting problem have been extensively studied in the literature.

Allcock's seminal contribution in \cite{Allcock:98} to the Plateau problem on higher Heisenberg groups was obtained by lifting a map $\mathbb{D} \rightarrow \mathbb{R}^{2n}$ satisfying a Lagrangian condition to a Legendrian (contact) map $\mathbb{D} \rightarrow \mathbb{H}_n$. The lifting approach has been applied to the study of minimal Lagrangian surfaces \cite{Schoen:Wolfson:01} and in the context of Allcock groups \cite{Magnani-2010-contact-equations}.

The lifting problem has also been used to generate quasisymmetric mappings between Carnot groups. Capogna--Tang \cite{Capogna-Tang-1995-quasiconformal_mappings_heisenberg} studied lifts of symplectomorphisms $\mathbb{R}^{2n} \rightarrow \mathbb{R}^{2n}$ to full-rank contact maps from $\mathbb{H}_n$ onto itself. Later Warhust \cite{War-contact-qc-real-filiform} (in the smooth context) and Xie \cite{Xie-2015-quasiconformal_maps_on_filiform_groups} (without smoothness assumptions) classified quasisymmetric automorphisms of a real filiform Carnot group of step at least three; such a map can be understood as an iterated lift of a bi-Lipschitz map $\mathbb{R}^2 \rightarrow \mathbb{R}^2$. The result is false in step two by the Korányi--Reimann examples \cite{koranyi-reimann-1985-qc-maps-on-the-heisenberg-group,koranyi-reimann-1995-foundations-for-the-theory-of-QC-maps-in-the-heisenberg-group}.

We also mention that Balogh--Hoefer-Isennegger--Tyson \cite{Balogh-Hoefer-Isenegger-Tyson-2006-horizonta_fractals_heisenberg} considered contact lifts of Lipschitz maps $\mathbb{R}^{2} \rightarrow \mathbb{R}^{2}$ in the context of iterated functions systems in the first Heisenberg group. Their methods led to the existence of horizontal BV surfaces in the first Heisenberg group which contrasts earlier results on the non-existence of Lipschitz surfaces by Ambrosio--Kirchheim \cite{Ambrosio-Kirchheim-2000-rectifiable-sets}, Magnani \cite{magnani-2004-unrectifiability-and-rigidity-in-stratified-groups}, and Wenger--Young \cite{Wenger-Young-2014-lipschitz-homotopy-groups-heisenberg}.

\subsection{Structure of the paper}
Sections~\ref{sec:prelim}--\ref{sec:uniqueness-etc} cover the basic setup.
In \cref{sec:prelim}, we recall preliminary notions on Carnot groups, the Rumin complex, and the Pansu pullback.
In \Cref{sec-preliminary-central-extension}, we recall basic facts about central extensions of Lie algebras and Lie groups, and adapt them to the Carnot setting.
In \cref{sec-structure-of-lifts}, we reduce the study of contact lifts to central extensions that preserve the horizontal rank.
In \cref{sec:lifts}, we characterize the existence of contact lifts in terms of horizontal path lifting, and rephrase the path lifting condition using a potential of the central extension 2-cocycle.
In \cref{sec:uniqueness-etc}, we discuss structural properties of contact lifts, including uniqueness and behavior of the lift in the fiber direction.

Sections~\ref{sec:rumin}--\ref{sec:lip1-connected} cover the proofs of our main results.
In \cref{sec:rumin}, we compute how the Rumin differentials of 1-forms transform in central extensions of Carnot groups, and prove Theorem \ref{thm-lift-iff-potential-pullback-rumin-differential}.
In \cref{sec-common-extension}, we study the connection between the Pansu pullback and lifts to central extensions, and prove \cref{thm-if-lift-cohomology-constant,thm-max-weight-implies-lift}.
In \cref{sec:lip1-connected}, we consider the Lipschitz $1$-connected setting, and prove \cref{thm-trivial-homotopy-implies-lift}.

Finally in \cref{sec:examples}, we give examples of Carnot groups and contact maps that do (not) admit contact lifts.

\textbf{Acknowledgements.}
E.H. was supported by the Research Council of Finland (grant 347964 The abnormal curves of Sub-Riemannian geometry). 
S.H. was supported by the Research Council of Finland, project number 360505.
T.I. was supported by the Swiss National Science Foundation grant 212867.
S.H. and T.I. were also supported by the Research Council of Finland, project number 332671.

\section{Preliminaries}\label{sec:prelim}
\subsection{Carnot groups}\label{sec-preliminary-carnot-group}
We recall the basic properties of Carnot groups. We refer the reader to \cite{LeDonne-2017-primer_on_carnot_groups} for further details.

A Lie algebra $ \mathfrak{g}$ is \emph{stratified} if it has a decomposition $ \mathfrak{g} =   \mathfrak{g}^{[1]} \oplus \dots \oplus  \mathfrak{g}^{[s]}$ with $[  \mathfrak{g}^{[1]},\mathfrak{g}^{[k]}] =  \mathfrak{g}^{[k+1]}$ for all $k\geq 1$, where we denote $  \mathfrak{g}^{[k]} = \{0\}$ for $k \geq s+1$. 
A \emph{(sub-Riemannian) Carnot group} is a simply connected nilpotent Lie group $ G$ with a stratified Lie algebra $ \mathfrak{g}$ equipped with an inner product on the horizontal layer $  \mathfrak{g}^{[1]}$. For considerations involving the Rumin complex, we will need an inner product not only on the horizontal layer $  \mathfrak{g}^{[1]}$, but instead on the whole Lie algebra $ \mathfrak{g}$. We always equip the Lie algebra of a Carnot group with an inner product for which the layers $  \mathfrak{g}^{[1]} \oplus \dots \oplus  \mathfrak{g}^{[s]}$ are pairwise orthogonal. The \emph{homogeneous dimension} of $ G$ is $ Q_{ G} \coloneqq \sum_{ k = 1 }^{ s } k \mathrm{dim}(   \mathfrak{g}^{[k]} )$. The \emph{(horizontal) rank} of $G$ is the dimension of the horizontal layer $\mathfrak{g}^{[1]}$.

Given $g \in  G$, we denote by $L_g \colon  G \rightarrow  G$ the \emph{left-translation} $L_g(h) = gh$. We identify $T_{e} G$ with $ \mathfrak{g}$. An absolutely continuous curve $ \gamma\colon[0,1]\to G$ is \emph{horizontal} if for almost every $t\in[0,1]$ its left-trivialized derivative $(L_{ \gamma(t)}^{-1})_*\dot{ \gamma}(t)$ is contained in the horizontal layer $  \mathfrak{g}^{[1]}$. The length of the horizontal curve $ \gamma$ is
\begin{equation*}
    \ell( \gamma) = \int_0^1 \norm{(L_{ \gamma(t)}^{-1})_*\dot{ \gamma}(t)}\,dt,
\end{equation*}
where the norm is the one induced by the inner product on $  \mathfrak{g}^{[1]}$. The sub-Riemannian distance between two points $g,h\in G$ is
\begin{equation*}
    d(g,h) = \inf\{ \ell( \gamma)\;\vert\;  \gamma\colon[0,1]\to G\text{ horizontal},  \gamma(0)=g, \gamma(1)=h \}.
\end{equation*}
By construction, this distance is left-invariant.

Carnot groups also admit a one-parameter group of automorphisms, known as the dilations $ \delta_{\lambda}\colon G\to G$, $\lambda>0$. The associated Lie algebra automorphisms, also denoted $ \delta_{\lambda}\colon \mathfrak{g}\to \mathfrak{g}$, are the linear maps defined on each layer of the stratification as
\begin{equation*}
     \delta_{\lambda}(X) = \lambda^kX,\quad X\in   \mathfrak{g}^{[k]}, \quad 1 \leq k \leq s.
\end{equation*}
The sub-Riemannian distance is 1-homogeneous with respect to these dilations, i.e., $d( \delta_{\lambda}g, \delta_{\lambda}h)=\lambda d(g,h)$.

\subsection{Left-invariant forms, the Lie algebra differential, and cocycles}\label{sec:left-invariant-d0-cocycle}
Let $ \mathfrak{h}$ be a Lie algebra, $ V$ a vector space, and $ \rho\colon \bigwedge^k \mathfrak{h}\to  V$ a vector-valued $k$-form. Here, and in what follows, $ V$ is assumed to be a finite-dimensional vector space. The \emph{Lie algebra differential $ d_0 \rho$ of $ \rho$} is the vector-valued $(k+1)$-form
\begin{align*}
    & d_0  \rho(X_1,\dots,X_{k+1}) \\
    &\quad = \sum_{i<j}(-1)^{i+j}   \rho([X_i,X_j],X_1,\ldots,\hat{X}_i,\ldots,\hat{X}_j,\ldots,X_{k+1}),
\end{align*}
where $\hat{X}_i$ means that $X_i$ is omitted from the list.
We identify $\bigwedge^1  \mathfrak{h} \simeq  \mathfrak{h}$, so vector valued $1$-forms are identified with linear maps $ \mathfrak{h} \rightarrow  V$. 
A \emph{$k$-cocycle} is a $k$-form whose Lie algebra differential vanishes.

When $ H$ is a Lie group with Lie algebra $ \mathfrak{h}$, we will identify $k$-forms $ \rho\colon \bigwedge^k \mathfrak{h}\to  V$ with their left-invariant extensions to differential $k$-forms on $ H$. Under this identification, the Lie algebra differential of $ \rho\colon \bigwedge^k \mathfrak{h}\to  V$ coincides with the exterior differential of the left-invariant differential form. In particular, a 2-cocycle $ \rho\colon \bigwedge^2 \mathfrak{h}\to  V$ can be viewed as a vector-valued left-invariant 2-form $ \rho\in\Omega^2( H; V)$ whose exterior differential vanishes.

\subsection{Rumin complex}\label{sec-rumin-complex}
We recall the basic properties of the Rumin complex introduced in \cite{Rumin-1999-differential_geometry_on_cc_spaces}. See \cite{Rumin-around-heat-decay} or \cite{Franchi-Tripaldi-2015-diff_forms_in_carnot_after_rumin} for more comprehensive presentations.

Let $ G$ be a Carnot group, and recall that there exists an inner product on $ \mathfrak{g}$ such that the layers $  \mathfrak{g}^{[k]}$ of the stratification are pairwise orthogonal. Fixing a basis and using duality, we obtain an inner product on $k$-forms $\bigwedge^k \mathfrak{g}^*$ for each $k\geq 1$. Using the inner product, we may consider the orthogonal projection $\pi_{ \operatorname{im}(d_0) }$ onto $\operatorname{im}(d_0)$ and the orthogonal complement $\operatorname{ker}(d_0)^{\perp}$. We let $d_0^{-1}$ denote the extension of the left-inverse of $d_0|_{ \operatorname{ker}(d_0)^{\perp} }$ satisfying $d_0^{-1} \circ \pi_{ \operatorname{im}(d_0) } = d_{0}^{-1}$.

Let $ U\subset G$ be a domain. The stratification on $ \mathfrak{g}$ induces a grading by weights also on the differential forms $\Omega^*( U)$, see \cite{Franchi-Tripaldi-2015-diff_forms_in_carnot_after_rumin} for the specifics. 
The relevant basic properties we recall are as follows. The weight of a horizontal 1-form is one. Representing a non-zero form in a left-invariant basis of pure weight forms, the weight of the form is the minimum of the weights of the non-zero terms. For a form of pure weight $w$, the Hodge star has weight $ Q_{ G}-w$. A $ V$-valued form has a componentwise representation relative to a basis of $ V$, and its weight is the minimum of the componentwise weights.

The weight preserving component of the de Rham differential $d$ is the $\mathcal{C}^\infty( U)$-linear extension of the Lie algebra differential $d_0$, which we also denote by $d_0$. We also extend $\pi_{ \mathrm{im}(d_0) }$ and $d_0^{-1}$ $\mathcal{C}^{\infty}(  U )$-linearly from $\bigwedge^{*} \mathfrak{g}^{*}$ to all of $\Omega^*( U)$. Consequently, $\ker(d_0)$, $\operatorname{im}(d_0)$, and their orthogonal complements have bases as $\mathcal{C}^{\infty}( U)$-modules consisting of pure weight left-invariant forms. Moreover, the orthogonal projections to these submodules do not decrease weight.

There is a subcomplex of $\Omega^*( U)$ defined by
\begin{equation*}
    E( U) = \mathrm{ker}( d_0^{-1} ) \cap \mathrm{ker}( d_0^{-1} d ),
\end{equation*}
with a projection operator $\pi_{E} \colon \Omega^{*}(  U ) \rightarrow E( U)$ such that $d\pi_E=\pi_Ed$, constructed as follows.

We let $D = d_0^{-1}( d - d_0 )$ and $P = \sum_{ k \geq 0 } (-1)^{k} D^{k}$. Here the summation is, in fact, finite, since the operator $D$ strictly increases the weight of forms, but by nilpotency of $ \mathfrak{g}$, there is an upper bound on the possible weights. The projection operator $\pi_E$ is defined by
\begin{equation}\label{eq-pi-E-formula}
    \pi_E = I - Pd_0^{-1}d - dPd_0^{-1}.
\end{equation}
Since each of the operators $d$, $d_0^{-1}$, and $P$ are weight non-decreasing, so is the projection $\pi_E$.

Next, we define
\begin{equation*}
    E_0( U) = \mathrm{ker}( d_0 ) \cap \mathrm{ker}( d_0^{-1} ) \subset \Omega^{*}(  U )
\end{equation*}
and let $\pi_{E_0}\colon \Omega^*( U)\to E_0( U)$ be the orthogonal projection.
The \emph{Rumin differential} is the map 
\begin{equation}\label{eq-rumin-differential}
    d_c \colon \Omega^{*}(  U ) \rightarrow E_0( U),\quad d_c\omega \coloneqq \pi_{E_0}\pi_{E}d\pi_{E_0}\omega, 
\end{equation}
and the pair $( E_0( U), d_c )$ is the \emph{Rumin complex}. 

By \cite[Theorem 1]{Rumin-1999-differential_geometry_on_cc_spaces}, we have the following result:

\begin{proposition}\label{prop:cohomology}
The pairs $( E( U), d )$ and $( E_{0}( U), d_c )$ are chain complexes. The projections $$\pi_E \colon ( E_0( U), d_c ) \rightarrow ( E( U), d )$$ and $$\pi_{E_0} \colon ( E( U), d ) \rightarrow ( E_0( U), d_c )$$ are mutually inverse chain maps. Furthermore, the cohomologies of these chain complexes are isomorphic to the de Rham cohomology.
\end{proposition}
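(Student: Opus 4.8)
The plan is to read Proposition~\ref{prop:cohomology} as a homological perturbation statement. The de Rham differential splits by weight as $d = d_0 + D'$, where $d_0$ is the weight-preserving part (itself a differential, $d_0^2 = 0$, namely the $\mathcal{C}^\infty(U)$-linear extension of the Lie algebra differential) and $D'$ strictly increases weight; the goal is to transfer the cohomology of $(\Omega^{*}(U), d)$ down to the $d_0$-harmonic forms $E_0(U) = \ker(d_0) \cap \ker(d_0^{-1})$. Two ingredients drive everything. First, the finite-dimensional Hodge theory of $d_0$, extended $\mathcal{C}^{\infty}(U)$-linearly: from the definition of $d_0^{-1}$ one reads off $d_0 d_0^{-1} = \pi_{\operatorname{im}(d_0)}$, $d_0^{-1} d_0 = \pi_{(\ker d_0)^{\perp}}$, $d_0^{-1} d_0^{-1} = 0$, $d_0^{-1}\pi_{\operatorname{im}(d_0)} = d_0^{-1}$, and the orthogonal decomposition $\Omega^{*}(U) = \operatorname{im}(d_0) \oplus E_0(U) \oplus (\ker d_0)^{\perp}$, equivalently $d_0 d_0^{-1} + d_0^{-1} d_0 = I - \pi_{E_0}$. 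Second, $D = d_0^{-1}(d - d_0)$ strictly raises weight and $\mathfrak{g}$ is nilpotent, so $D$ is nilpotent and $P = \sum_{k \geq 0}(-D)^{k}$ is a genuine two-sided inverse of $I + D$.

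\textbf{The complex $(E(U), d)$ and the projection $\pi_E$.} First, $(E(U), d)$ is a subcomplex: for $\omega \in E(U) = \ker(d_0^{-1}) \cap \ker(d_0^{-1}d)$ one has $d_0^{-1}(d\omega) = 0$ and $d_0^{-1}d(d\omega) = d_0^{-1}d^{2}\omega = 0$, so $d\omega \in E(U)$; combined with $d^2 = 0$ this gives the first complex. Next, using the Hodge relations above together with $P(I+D) = (I+D)P = I$ and $d^2 = 0$, one verifies the operator identities $d\pi_E = \pi_E d$ (both sides collapse to $d - dPd_0^{-1}d$), $d_0^{-1}\pi_E = 0$ (this reduces to $d_0^{-1} d P d_0^{-1} = d_0^{-1}$, which follows from $(I+D)P = I$), and $\pi_E|_{E(U)} = \operatorname{id}$ (both correction terms vanish on $E(U)$); hence $\pi_E^{2} = \pi_E$ with image exactly $E(U)$, and $\pi_E$ is an idempotent chain map of $(\Omega^{*}(U), d)$ onto the subcomplex $(E(U), d)$. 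Finally, the defining formula gives $I - \pi_E = (Pd_0^{-1})d + d(Pd_0^{-1})$, so $Pd_0^{-1}$ is a chain homotopy between $\operatorname{id}_{\Omega^{*}(U)}$ and $\pi_E$; therefore the inclusion $(E(U), d) \hookrightarrow (\Omega^{*}(U), d)$ is a chain homotopy equivalence and $H^{*}(E(U), d) \cong H^{*}_{\mathrm{dR}}(U)$.

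\textbf{Passing to $(E_0(U), d_c)$.} Because $\pi_{E_0}d_0^{-1} = 0$ (expand $\pi_{E_0} = I - d_0 d_0^{-1} - d_0^{-1}d_0$ and use $d_0^{-1}d_0^{-1} = 0$, $d_0^{-1}\pi_{\operatorname{im}(d_0)} = d_0^{-1}$) and every summand of $Pd_0^{-1}$ has $d_0^{-1}$ as its leftmost factor, one gets $\pi_{E_0}\pi_E = \operatorname{id}$ on $E_0(U)$; dually, for $\omega \in E(U)$ one computes $\pi_{E_0}\omega = (I + D)\omega$, and then $\pi_E|_{E(U)} = \operatorname{id}$ together with $\pi_E d_0^{-1} = 0$ yields $\pi_E\pi_{E_0} = \operatorname{id}$ on $E(U)$. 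Thus $\pi_E \colon E_0(U) \to E(U)$ and $\pi_{E_0} \colon E(U) \to E_0(U)$ are mutually inverse linear isomorphisms, and a short check using $d\pi_E = \pi_E d$ and the subcomplex property shows they intertwine $d|_{E(U)}$ with $d_c = \pi_{E_0}d\pi_E$ (equivalently $d_c = \pi_{E_0}\pi_E d \pi_{E_0}$ on $E_0(U)$); in particular $d_c^{2} = 0$, so $(E_0(U), d_c)$ is a complex, $\pi_E$ and $\pi_{E_0}$ are mutually inverse chain maps, and $H^{*}(E_0(U), d_c) \cong H^{*}(E(U), d) \cong H^{*}_{\mathrm{dR}}(U)$.

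\textbf{Main obstacle.} The only substantial work is the middle step: proving that the closed-form operator $\pi_E = I - Pd_0^{-1}d - dPd_0^{-1}$ is simultaneously idempotent with image $E(U)$, commutes with $d$, and is chain homotopic to the identity. This is a finite but intricate juggling of the Hodge relations for $d_0$, the geometric series $P = (I+D)^{-1}$, and $d^2 = 0$ — precisely the content of Rumin's Theorem~1 in \cite{Rumin-1999-differential_geometry_on_cc_spaces} — which is why we invoke that reference rather than reproduce the computation here.
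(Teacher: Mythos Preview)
Your proposal is correct. The paper itself gives no proof at all: it simply states the proposition as a direct consequence of \cite[Theorem~1]{Rumin-1999-differential_geometry_on_cc_spaces}. You go further, sketching the homological-perturbation argument that underlies Rumin's result --- the Hodge identities for $d_0$, nilpotence of $D$ giving $P=(I+D)^{-1}$, the verifications $d\pi_E=\pi_E d$, $d_0^{-1}\pi_E=0$, $\pi_E|_{E}=\operatorname{id}$, and the explicit homotopy $I-\pi_E=(Pd_0^{-1})d+d(Pd_0^{-1})$ --- before ultimately citing the same reference for the detailed bookkeeping. One minor inconsistency: your ``main obstacle'' paragraph says the middle step (idempotence, image $E(U)$, commutation with $d$, homotopy) is what you defer to Rumin, yet you already verified each of these in the preceding paragraph; the sketches there are in fact sound, so you have essentially reproduced the proof rather than merely invoked it.
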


We recall some further basic properties of the complex $E_0$ and its projection. Namely, the complex $E_0$ is invariant under the Hodge star operator, and we have $\pi_{E_0}\omega \wedge \eta = \omega \wedge \pi_{E_0}\eta = \pi_{E_0}\omega \wedge \pi_{E_0}\eta$ for $\omega \in \Omega^k( U)$ and $\eta \in \Omega^l( U)$ such that $\omega \wedge \eta$ is top-dimensional, i.e., $k+l = n$ for the topological dimension $n$ of $ U$.

On the level of integrals, the projection $\pi_{E}$ behaves similarly to $\pi_{E_0}$ with respect to the wedge product. Namely, if $\omega \in \Omega^{k}(  U )$ and $\eta \in \Omega^{l}_{c}(  U )$ are such that $\omega \wedge \eta$ is top-dimensional, then
\begin{align*}
    \int_{  U } \pi_{E}\omega \wedge \eta
    =
    \int_{  U } \omega \wedge \pi_{E}\eta
    =
    \int_{  U } \pi_{E}\omega \wedge \pi_{E}\eta.
\end{align*}

We use the notation $\partial_c \eta = ( -1 )^{k+1} d_c\eta$ (resp. $\partial \eta = (-1)^{k+1} d\eta$) when $\eta$ is a form of degree $n-k-1$. Then it holds that
\begin{align*}
    \int_{  U } d_c\omega \wedge \eta
    =
    \int_{  U } \omega \wedge \partial_c\eta
\end{align*}
whenever $\omega \in \Omega^{k}(  U )$ and $\eta \in \Omega^{n-k-1}_{c}(  U )$. For details, see \cite[Section~2 and Proposition~2.8]{Rumin-around-heat-decay}.

\subsection{Gradings and graded maps}\label{sec-preliminary-gradings}
Recall that a \emph{grading} of a vector space $ V$ is a direct sum decomposition $ V= \bigoplus_{ j \geq 1 }  V^{[j]}$. A linear map $L \colon  V_1 \to  V_2$ between graded vector spaces is \emph{graded} if $L(  V_1^{[k]})\subset V_2^{[k]}$ for $k\geq 1$. 

A Lie algebra $ \mathfrak{g}$ is \emph{graded} if it has a grading compatible with the Lie bracket. That is, $[   \mathfrak{g}^{[j]}, \mathfrak{g}^{[k]} ] \subset  \mathfrak{g}^{[j+k]}$ for $k, j \geq 1$. A stratification is a particular case of a grading. Moreover, a graded vector space can be considered a graded Lie algebra when equipped with the trivial Lie bracket.

A simply connected Lie group is graded if its Lie algebra is graded. A Lie group homomorphism $L \colon  G_1 \to  G_2$ between graded simply connected Lie groups is \emph{graded} if the corresponding Lie algebra homomorphism $L_* \colon  \mathfrak{g}_1 \to  \mathfrak{g}_2$ is graded.

\subsection{Pansu differential and pullback}\label{sec-preliminary-pansu-pullback}
Let $ G_1$ and $ G_2$ be two Carnot groups.

Suppose that $ f \colon  U_1 \to  G_2$ is a (locally) Lipschitz map on a domain $ U_1\subset G_1$. By the Pansu--Rademacher Theorem \cite{Pansu-1989-metriques_de_carnoth_caratheodory}, at almost every $g \in  U_1$, the map $ f$ has a \emph{Pansu differential} 
\begin{equation*}
     d_P{f}(g)= \lim_{ \lambda \to 0^{+} }  \delta_{1/\lambda} \circ L^{-1}_{ f(g)}\circ  f\circ L_g\circ \delta_{\lambda}\colon G_1\to G_2,
\end{equation*}
where the convergence is uniform on compact subsets of $ G_1$. The Pansu differential, whenever it exists, is a graded Lie group homomorphism.

For a point $g\in  U_1$ where the Pansu derivative of $ f$ exists, denote 
\begin{equation*}
     D_P{ f}(g) = (L_{ f(g)})_*\circ d_P{f}(g)_*\circ (L_{g^{-1}})_*\colon T_g G_1\to T_{ f(g)} G_2.
\end{equation*}
This leads to a natural adaptation of the usual de Rham pullback by a smooth map as follows.
\begin{definition}[\cite{Kleiner-Muller-Xie-2021-pansu_pullback}]\label{def:pansupullback}
The \emph{Pansu pullback} of a smooth $k$-form $\omega\in\Omega( G_2)$ is the $k$-form $ {f}_P^*\omega$ defined by
\begin{equation*}
    ( {f}_P^*\omega)_g(X_1,\ldots,X_k) = \omega_{ f(g)}( D_P{f}(g)X_1,\ldots,  D_P{f}(g)X_k)
\end{equation*}
for $X_1,\ldots,X_k\in T_g G_1$ and $g\in U_1$.
\end{definition}
The Pansu pullback extends for vector-valued forms naturally.

\begin{remark}
If $ f$ is a graded homomorphism, then $ d_P{ f}(g) =  f$ everywhere and the Pansu pullback coincides with the classical de Rham pullback $ f^{*}$.
\end{remark}

\section{Central extensions}\label{sec-preliminary-central-extension}

\subsection{Central extension of Lie algebras and Lie groups}
In this subsection, we recall standard facts about central extensions of Lie algebras and Lie groups.

\begin{definition}\label{def-central-extension}
    Let $ \mathfrak{h}$ be a Lie algebra and $ \rho\colon \bigwedge^2 \mathfrak{h}\to  V$ a 2-cocycle with values in a vector space $ V$. Denote the Lie bracket of $ \mathfrak{h}$ by $[\cdot,\cdot]_{ \mathfrak{h}}$. 
    The \emph{central extension of $ \mathfrak{h}$ by $ \rho$} is a Lie algebra $ \mathfrak{g}$ with a direct sum decomposition $ \mathfrak{h}\oplus V$ equipped with the Lie bracket
    \begin{equation*}
        [X+A,Y+B]_{ \mathfrak{g}} = [X,Y]_{ \mathfrak{h}} +  \rho(X,Y),\quad X,Y\in \mathfrak{h},\quad A,B\in V.
    \end{equation*}
    The direct sum decomposition $ \mathfrak{g} =  \mathfrak{h}\oplus V$ induces a natural inclusion $\iota_{*} \colon  V \to  \mathfrak{g}$ and a projection $ \pi_{*} \colon  \mathfrak{g} \to  \mathfrak{h}$ which are Lie algebra homomorphisms. The central extension is denoted by $ V \overset{\iota_{*}}{\to}  \mathfrak{g} \overset{ \pi_{*}}{\to}  \mathfrak{h}$.
    
    If $ G$ and $ H$ are simply connected Lie groups with Lie algebras $ \mathfrak{g}$ and $ \mathfrak{h}$, respectively, we also refer to the induced short exact sequence $ V \overset{\iota}{\to}  G\overset{ \pi}{\to} H$ as a central extension of $ H$ by $ \rho$.
\end{definition}
    We often suppress the inclusions maps $ V \to  \mathfrak{g}$ and $ \mathfrak{h} \to  \mathfrak{g}$ to simplify notation.

\begin{remark}\label{rem:graded}
    Central extensions using a 2-cocycle are, in a sense, the converse to considering a section of a surjective Lie algebra homomorphism $ \pi_{*}\colon \mathfrak{g}\to \mathfrak{h}$ such that $\ker( \pi_{*})$ is contained in the center of $ \mathfrak{g}$. Indeed, as observed, for instance in \cite[Section~2]{deGraaf-2007-classification_of_6d_nilpotent}, if $\sigma\colon  \mathfrak{h}\to \mathfrak{g}$ is a linear section of $ \pi_{*}$, then the map $ \rho\colon\bigwedge^2 \mathfrak{h}\to\ker( \pi_{*})$ defined by
    \begin{equation}\label{eq:rem:graded:cocycle}
         \rho(X,Y) = [\sigma(X),\sigma(Y)]_{ \mathfrak{g}} - \sigma( [X,Y]_{ \mathfrak{h}} )
    \end{equation}
    is a 2-cocycle. The central extension of $ \mathfrak{h}$ by $ \rho$ is isomorphic to $ \mathfrak{g}$, with the isomorphism given by
    \begin{equation*}
         \mathfrak{h}\oplus \ker( \pi_{*})\ni X+Y\mapsto \sigma(X) + Y\in \mathfrak{g}.
    \end{equation*}
\end{remark}

\subsection{Central extension of Carnot groups}
    We mainly work with central extensions of Carnot groups. The definition of a central extension needs to be adapted to account for the additional structure. We do this in two parts --- algebraic and metric --- as follows.
\begin{definition}\label{definition-carnot-central-extension}
    A central extension $ V \overset{\iota}{\to}  G\overset{ \pi}{\to} H$ by $ \rho$ is \emph{stratified} if the following properties hold.
    \begin{enumerate}[(i)]
        \item The Lie algebras $ \mathfrak{h}$ and $ \mathfrak{g}$ are stratified and the vector space $ V$ is graded.
        \item The inclusion $\iota_{*} \colon  V \to  \mathfrak{g}$ and the projection $ \pi_{*} \colon  \mathfrak{g} \to  H$ are graded with respect to the gradings from (i).
    \end{enumerate}
    A stratified central extension $ V \overset{\iota}{\to}  G\overset{ \pi}{\to} H$ by $ \rho$ is a \emph{central extension of Carnot groups} if the following hold.
    \begin{enumerate}[(i)] \addtocounter{enumi}{2}
        \item The Lie groups $ G$ and $ H$ are Carnot groups and $ V$ is an inner product space.
        \item The inclusion $\iota_{*} \colon  V \to  \mathfrak{g}$ is an isometry and $ \pi_{*} \colon  \mathfrak{g} \to  \mathfrak{h}$ is a submetry with respect to the inner product structures from (iii).
    \end{enumerate}
\end{definition}
A linear map $ \pi_{*} \colon  \mathfrak{g} \to  \mathfrak{h}$ between inner product spaces is a \emph{submetry} if the restriction of $ \pi_{*}$ to $\operatorname{ker}( \pi_{*})^{\perp}$ is an isometry onto $ \mathfrak{h}$.

The definition of central extension of Carnot groups is motivated by the following adaptation of \Cref{rem:graded} to the Carnot setting.

\begin{remark}\label{rem:centralextension:carnot}
    Consider Carnot groups $ G$ and $ H$ together with a graded homomorphism $ \pi \colon  G \to  H$ for which $ \pi_{*} \colon  \mathfrak{g} \to  \mathfrak{h}$ is a submetry and $\ker(  \pi_{*} )$ is contained in the center of $ \mathfrak{g}$.
    
    Equip $ \mathfrak{h} \oplus \ker( \pi_{*})$ with the direct sum of the inner products on $ \mathfrak{h}$ and $\ker( \pi_{*})$.  Next, consider the isometric graded section $\sigma \colon  \mathfrak{h}\to \ker( \pi_{*})^{\perp} \subset  \mathfrak{g}$ of $ \pi_{*} \colon  \mathfrak{g} \to  \mathfrak{h}$. Then, defining $ \rho$ by \eqref{eq:rem:graded:cocycle}, we obtain a central extension $\ker( \pi_{*}) \to  \mathfrak{h} \oplus \ker( \pi_{*}) \to  \mathfrak{h}$ by $ \rho$, where
    \begin{equation*}
         \mathfrak{h}\oplus \ker( \pi_{*}) \ni X+Y\mapsto \sigma(X) + Y\in \ker( \pi_{*})^{\perp} \oplus \ker( \pi_{*}) =  \mathfrak{g}
    \end{equation*}
    is a graded isomorphism that is an isometry. In particular, the corresponding central extension of Lie groups by $ \rho$ is a central extension of Carnot groups.
\end{remark}   

We observe that the construction of the sub-Riemannian distance guarantees that $ \pi \colon  G \to  H$ is a metric submetry. That is, $ \pi(B(g,r)) = B( \pi(g),r)$ for all balls $B(g,r)\subset G$.

\subsection{Basic results about central extensions}

We start this subsection by considering Lie algebra homomorphisms in the context of central extensions.
\begin{lemma}\label{lemma-homomorphism-central-extension}
    Let $ V_1 \to  \mathfrak{g}_1\to \mathfrak{h}_1$ and $ V_2 \to  \mathfrak{g}_2 \to  \mathfrak{h}_2$ be central extensions by 2-cocycles $ \rho_1$ and $ \rho_2$, respectively. Let $L \colon  \mathfrak{h}_1 \rightarrow  \mathfrak{h}_2$ be a Lie algebra homomorphism and let $ \varphi\colon  V_1\to V_2$ be a linear map.
    \begin{enumerate}[(i)]
    \item\label{lemma-homomorphism-central-extension-existence}
    There exists a Lie algebra homomorphism $ \psi\colon  \mathfrak{g}_1\to \mathfrak{g}_2$ such that the diagram
    \begin{center}
		\begin{tikzcd}
			 V_1 \rar\dar{ \varphi}&  \mathfrak{g}_1\rar{ \pi_1}\dar{ \psi}&  \mathfrak{h}_1\dar{L}\\
			 V_2 \rar&  \mathfrak{g}_2\rar{ \pi_2}&  \mathfrak{h}_2
		\end{tikzcd}
    \end{center}
    commutes if and only if there exists a linear map $\mu \colon  \mathfrak{h}_1 \rightarrow  V_2$ such that $ \varphi\circ \rho_1 - L^{*} \rho_2 =  d_0\mu$. When such a $\mu$ exists, a homomorphism $ \psi$ is given by
    \begin{equation}\label{eq-generic-homomorphism-extension}
         \psi(X+Y) = L(X) + \Big(\mu(X)+ \varphi(Y)\Big) \in  \mathfrak{h}_2\oplus V_2= \mathfrak{g}_2
    \end{equation}
    for $X+Y \in  \mathfrak{h}_1 \oplus  V_1= \mathfrak{g}_1$.
    \item\label{lemma-homomorphism-central-extension-graded-uniqueness}
    When the central extensions are stratified and $ \varphi$ and $L$ are graded, a graded homomorphism $ \psi$ is unique up to a graded linear map $\theta \colon  \mathfrak{h}_1 \rightarrow  V_2$ with $[ \mathfrak{h}_1, \mathfrak{h}_1]\subset\ker\theta$. In particular, if $ \mathfrak{h}_{2}^{[1]}= \mathfrak{g}_{2}^{[1]}$, then the graded homomorphism $ \psi$ is unique.
    \end{enumerate}
\end{lemma}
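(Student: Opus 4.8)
The plan is to prove part (i) by a direct computation, translating the commutativity of the diagram and the homomorphism property of $\psi$ into the equation $\varphi \circ \rho_1 - L^* \rho_2 = d_0 \mu$, and then to derive part (ii) from the description of $\psi$ in part (i).

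For part (i), first I would argue that commutativity of the diagram forces $\psi$ to have the form $\psi(X + Y) = L(X) + \mu(X) + \varphi(Y)$ for some linear map $\mu \colon \mathfrak{h}_1 \to V_2$: commutativity of the right square gives $\pi_2 \circ \psi = L \circ \pi_1$, so the $\mathfrak{h}_2$-component of $\psi(X+Y)$ is $L(X)$; commutativity of the left square gives $\psi|_{V_1} = \varphi$, so the $V_2$-component of $\psi(Y)$ is $\varphi(Y)$; writing the $V_2$-component of $\psi(X)$ as $\mu(X)$ then gives the stated form, and $\mu$ is linear because $\psi$, $L$, $\varphi$ are. Conversely, any map of this form makes the diagram commute. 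It remains to check when such a $\psi$ is a Lie algebra homomorphism. I would expand $\psi([X+A, Y+B]_{\mathfrak{g}_1})$ using the bracket formula from \Cref{def-central-extension}, namely $[X+A,Y+B]_{\mathfrak{g}_1} = [X,Y]_{\mathfrak{h}_1} + \rho_1(X,Y)$, obtaining
\begin{equation*}
    \psi([X+A,Y+B]_{\mathfrak{g}_1}) = L([X,Y]_{\mathfrak{h}_1}) + \mu([X,Y]_{\mathfrak{h}_1}) + \varphi(\rho_1(X,Y)),
\end{equation*}
and compare it with
\begin{equation*}
    [\psi(X+A), \psi(Y+B)]_{\mathfrak{g}_2} = [L(X), L(Y)]_{\mathfrak{h}_2} + \rho_2(L(X), L(Y)),
\end{equation*}
where I used that $V_2$ is central and that the $\mathfrak{h}_2$-components of $\psi(X+A)$ and $\psi(Y+B)$ are $L(X)$ and $L(Y)$. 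Since $L$ is already a homomorphism, $L([X,Y]_{\mathfrak{h}_1}) = [L(X), L(Y)]_{\mathfrak{h}_2}$, so equality of the two expressions reduces to
\begin{equation*}
    \varphi(\rho_1(X,Y)) - \rho_2(L(X), L(Y)) = -\mu([X,Y]_{\mathfrak{h}_1}) = (d_0\mu)(X,Y),
\end{equation*}
using the definition of $d_0$ on $1$-forms from \cref{sec:left-invariant-d0-cocycle}, that is, $(d_0\mu)(X,Y) = -\mu([X,Y]_{\mathfrak{h}_1})$. Recognizing $\rho_2(L(X),L(Y)) = (L^*\rho_2)(X,Y)$ finishes part (i). The only mildly delicate point is getting the sign in $d_0\mu$ to match the convention in the displayed formula for $d_0$ with $k=1$; this is a bookkeeping check rather than a real obstacle.

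For part (ii), suppose the extensions are stratified and $\varphi$, $L$ are graded. If $\psi, \psi'$ are two graded homomorphisms making the diagram commute, then by part (i) both have the form in \eqref{eq-generic-homomorphism-extension} with maps $\mu$ and $\mu'$, so $\theta \coloneqq \mu - \mu' \colon \mathfrak{h}_1 \to V_2$ satisfies $\psi - \psi' = \theta \circ \pi_{1*}$ (viewing $\theta$ as vanishing on $V_1$), and $d_0\theta = (d_0\mu) - (d_0\mu') = 0$, i.e. $\theta([\mathfrak{h}_1,\mathfrak{h}_1]) = 0$, equivalently $[\mathfrak{h}_1,\mathfrak{h}_1] \subset \ker\theta$. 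Gradedness of $\psi$ and $\psi'$ forces $\theta = \psi - \psi'$ restricted appropriately to be graded. Conversely, given a graded $\theta$ with $[\mathfrak{h}_1,\mathfrak{h}_1] \subset \ker\theta$ and a graded homomorphism $\psi$ as in part (i), the map $\psi + \theta\circ\pi_{1*}$ is again a graded homomorphism with the same $\varphi$ and $L$, since $d_0\theta = 0$ means the homomorphism condition is preserved; this gives the claimed description of the ambiguity. For the final assertion, if $\mathfrak{h}_2^{[1]} = \mathfrak{g}_2^{[1]}$, then $V_2$ has no component in weight $1$, so $V_2 = \bigoplus_{j \geq 2} V_2^{[j]}$. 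A graded $\theta \colon \mathfrak{h}_1 \to V_2$ with $[\mathfrak{h}_1,\mathfrak{h}_1] \subset \ker\theta$ must then vanish: on the horizontal layer $\mathfrak{h}_1^{[1]}$ it maps into $V_2^{[1]} = \{0\}$, and on $\mathfrak{h}_1^{[k]}$ for $k \geq 2$ we have $\mathfrak{h}_1^{[k]} \subset [\mathfrak{h}_1,\mathfrak{h}_1]$ by stratification (since $\mathfrak{h}_1^{[k]} = [\mathfrak{h}_1^{[1]}, \mathfrak{h}_1^{[k-1]}]$), so $\theta$ vanishes there too. Hence $\theta = 0$ and $\psi$ is unique.

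I expect the main (though still modest) obstacle to be the care needed in part (ii) to pin down exactly what "unique up to a graded linear map $\theta$" means — namely identifying the affine space of graded lifts with a coset of $\{\theta \colon \mathfrak{h}_1 \to V_2 \text{ graded}, [\mathfrak{h}_1,\mathfrak{h}_1]\subset\ker\theta\}$ — and in keeping straight the identifications $\mathfrak{g}_i = \mathfrak{h}_i \oplus V_i$ that suppress the inclusion maps. The computational core in part (i) is routine once the bracket formula and the $d_0$-convention are substituted correctly.
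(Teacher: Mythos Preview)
Your proposal is correct and follows essentially the same approach as the paper's proof: both reduce commutativity of the diagram to the form \eqref{eq-generic-homomorphism-extension}, compute the bracket identity to obtain $\varphi\circ\rho_1 - L^*\rho_2 = d_0\mu$, and for part (ii) compare two graded lifts via $\theta = \mu - \mu'$, using $V_2^{[1]}=0$ together with $[\mathfrak{h}_1,\mathfrak{h}_1]\subset\ker\theta$ and stratification to force $\theta=0$. Your write-up is slightly more detailed (you spell out the converse direction in (ii) and the layer-by-layer vanishing of $\theta$), but the argument is the same.
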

\begin{proof}
    \eqref{lemma-homomorphism-central-extension-existence} The linear maps $ \psi$ for which the diagram in the claim commutes necessarily have the form \eqref{eq-generic-homomorphism-extension}, where $\mu \colon  \mathfrak{h}_1 \rightarrow  V_2$ is a linear map. We need to check when $ \psi$ is a Lie algebra homomorphism. 
    
    If $Z,Z' \in  \mathfrak{h}_1 \oplus  V_1$, then
    \begin{align*}
         \psi([Z,Z']_{ \mathfrak{g}_1})
        &=
         \psi( [ \pi_1(Z), \pi_1(Z')]_{ \mathfrak{h}_1} +  \rho_1(  \pi_1(Z),  \pi_1(Z')) )
        \\
        &=
        L([ \pi_1(Z),  \pi_1(Z')]_{ \mathfrak{h}_1}) + \mu([ \pi_1(Z),  \pi_1(Z')]_{ \mathfrak{h}_1})
        \\
        &\quad+  \varphi\circ  \rho_1 (  \pi_1(Z),  \pi_1(Z'))
    \end{align*}
    and
    \begin{align*}
        [  \psi(Z),  \psi(Z') ]_{ \mathfrak{g}_2}
        =
         [ L( \pi_1(Z)), L( \pi_1(Z')) ]_{ \mathfrak{h}_2} +    \rho_2 ( L( \pi_1(Z)), L( \pi_1(Z')) ).
    \end{align*}
    Since $L$ is by assumption a Lie algebra homomorphism, the $ \mathfrak{h}_2$ components of the above expressions agree. Comparing the $ V_2$ components and writing $\mu([\cdot,\cdot]_{ \mathfrak{h}_1})$ as $-d_0\mu(\cdot,\cdot)$, we observe that $ \psi$ is a Lie algebra homomorphism if and only if $-d_0\mu +  \varphi\circ  \rho_1 = L^* \rho_2$.

    \eqref{lemma-homomorphism-central-extension-graded-uniqueness} 
    Suppose $ \psi$ and $ \psi'$ are two graded homomorphisms from \eqref{lemma-homomorphism-central-extension-existence}. By \eqref{eq-generic-homomorphism-extension}, their difference is
    \begin{equation*}
         \psi(X,Y)- \psi'(X,Y) = \mu(X)-\mu'(X)\in V_2.
    \end{equation*}
    The difference $\theta=\mu-\mu'\colon \mathfrak{h}_1\to V_2$ is a graded linear map with $d_0\theta = d_0\mu-d_0\mu' = 0$, so $[ \mathfrak{h}_1, \mathfrak{h}_1]\subset\ker\theta$. If, furthermore, $ \mathfrak{h}_{2}^{[1]}= \mathfrak{g}_{2}^{[1]}$, then $ V_{2}^{[1]} = 0$. Then the graded assumption implies $ \mathfrak{h}_{1}^{[1]}\subset\ker\theta$ and thus $\theta=0$, so the graded homomorphism $ \psi$ is unique.
\end{proof}

Given a basis $v_1,\ldots,v_m$ of $ V$, we may decompose the 2-cocycle $ \rho\colon \bigwedge^2 \mathfrak{h}\to  V$ into 2-cocycles $ \rho^j\colon \bigwedge^2 \mathfrak{h}\to \RR$ such that
\begin{equation*}
     \rho(X,Y) = \sum_{j=1}^{m} \rho^j(X,Y)v_j,\quad X,Y\in  \mathfrak{h}.
\end{equation*}
We denote $ \rho=\sum_{j=1}^{m}v_j \rho^j$ for brevity.

Within the proofs of our main results, it will be convenient to assume that the cocycles $ \rho^j$ are either pairwise orthogonal in the Rumin complex, see \cref{sec-rumin-complex}, or are contained in the Rumin complex $E_0$ themselves. For this purpose, we recall a limited form of \cite[Lemma~3]{deGraaf-2007-classification_of_6d_nilpotent}:

\begin{lemma}\label{lemma-carnot-central-extension-isomorphism}
    Let $v_1,\ldots,v_m$ be a basis of $ V$ and let $ V\to  G\to H$ be a central extension by a cocycle $ \rho=\sum_{j=1}^{m}v_j \rho^j\colon \bigwedge^2 \mathfrak{h}\to  V$. Suppose that $\tilde{ \rho} = \sum_{j=1}^{m}v_j\tilde{ \rho}^j\colon \bigwedge^2 \mathfrak{h}\to  V$ is another cocycle and $ \omega^1,\ldots, \omega^m\colon \bigwedge^1 \mathfrak{h}\to \RR$ are 1-forms for which the equality
    \begin{equation*}
        \operatorname{span}\{ \rho^{1},\ldots, \rho^{m}\} = \operatorname{span}\{\tilde{ \rho}^{1}+d_0 \omega^{1},\ldots,\tilde{ \rho}^{m}+d_0 \omega^{m}\}
    \end{equation*}
    of $\mathbb{R}$-linear spans holds in the space of $2$-forms. Then a central extension $ V\to\tilde{ G}\to H$ by $\tilde{ \rho}$ is isomorphic to the central extension $ V \to  G \to  H$ by $ \rho$ via a commutative diagram
    \begin{center}
    \begin{tikzcd}
         V\dar{ \varphi}\rar& G\rar\dar{ \psi}& H\dar{\operatorname{id}}\\
         V\rar&\tilde{ G}\rar& H
    \end{tikzcd}
    \end{center}
\end{lemma}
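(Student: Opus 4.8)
The plan is to deduce the statement from \cref{lemma-homomorphism-central-extension}. Apply that lemma with $\mathfrak{h}_1=\mathfrak{h}_2=\mathfrak{h}$, with $L=\operatorname{id}_{\mathfrak{h}}$, with $V_1=V_2=V$, and with the two central extensions being the one by $\rho$ (with total Lie algebra $\mathfrak{g}$, the Lie algebra of $G$) and the one by $\tilde{\rho}$ (with total Lie algebra $\tilde{\mathfrak{g}}$, the Lie algebra of $\tilde{G}$). Since $\operatorname{id}^{*}\tilde{\rho}=\tilde{\rho}$, producing a homomorphism $\psi\colon\mathfrak{g}\to\tilde{\mathfrak{g}}$ sitting in the required commutative diagram over a linear map $\varphi\colon V\to V$ on the fiber is, by the existence criterion in \cref{lemma-homomorphism-central-extension}, equivalent to finding a linear map $\mu\colon\mathfrak{h}\to V$ with $\varphi\circ\rho-\tilde{\rho}=d_0\mu$. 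Moreover, by formula \eqref{eq-generic-homomorphism-extension} such a $\psi$ has the block-lower-triangular shape $\psi(X+Y)=X+\bigl(\mu(X)+\varphi(Y)\bigr)$, so $\psi$ is a linear isomorphism as soon as $\varphi$ is. Hence it suffices to produce an \emph{invertible} $\varphi$ together with a $\mu$ satisfying the cocycle identity; passing to the simply connected Lie groups $G,\tilde G,V$ at the end then upgrades the Lie algebra isomorphism to the asserted isomorphism of central extensions.

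To construct $\varphi$ and $\mu$, put $W=\operatorname{span}\{\rho^1,\dots,\rho^m\}$, which by hypothesis equals $\operatorname{span}\{\tilde{\rho}^1+d_0\omega^1,\dots,\tilde{\rho}^m+d_0\omega^m\}$. Since each $\tilde{\rho}^l+d_0\omega^l$ lies in $W$, choose scalars $A_{lk}$ with $\tilde{\rho}^l+d_0\omega^l=\sum_k A_{lk}\rho^k$, and define $\varphi\colon V\to V$ by $\varphi(v_k)=\sum_l A_{lk}v_l$ and $\mu=\sum_l v_l\omega^l\colon\mathfrak{h}\to V$. Expanding componentwise, $\varphi\circ\rho=\sum_l v_l\bigl(\sum_k A_{lk}\rho^k\bigr)=\sum_l v_l\bigl(\tilde{\rho}^l+d_0\omega^l\bigr)=\tilde{\rho}+d_0\mu$, so the required identity holds for \emph{any} choice of the $A_{lk}$.

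The only point requiring care is that the $A_{lk}$ can be chosen so that $\varphi$ is invertible; this is not automatic when $\rho^1,\dots,\rho^m$ are linearly dependent. Consider the two linear maps $R,\widetilde R\colon\RR^m\to W$ with $R(e_k)=\rho^k$ and $\widetilde R(e_k)=\tilde{\rho}^k+d_0\omega^k$; both are surjective, which is exactly the span hypothesis. The requirement ``$\tilde{\rho}^l+d_0\omega^l=\sum_k A_{lk}\rho^k$ for all $l$'' says precisely that the linear map $\Phi\colon\RR^m\to\RR^m$ represented by the transpose of the matrix $(A_{lk})$ satisfies $R\circ\Phi=\widetilde R$, and $\varphi$ is invertible if and only if $\Phi$ is. Choose direct sum decompositions $\RR^m=\ker R\oplus C$ and $\RR^m=\ker\widetilde R\oplus\widetilde C$ with $R|_C$ and $\widetilde R|_{\widetilde C}$ isomorphisms onto $W$; this is possible because $\dim C=\dim\widetilde C=\dim W$ and $\dim\ker R=\dim\ker\widetilde R=m-\dim W$. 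Define $\Phi$ to be $(R|_C)^{-1}\circ\widetilde R|_{\widetilde C}$ on $\widetilde C$ and an arbitrary linear isomorphism $\ker\widetilde R\to\ker R$ on $\ker\widetilde R$. Then $\Phi$ is a linear automorphism of $\RR^m$ and $R\circ\Phi=\widetilde R$, so this $\Phi$ yields an invertible $\varphi$.

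With $\varphi$ and $\mu$ in hand, the existence part of \cref{lemma-homomorphism-central-extension} provides a Lie algebra isomorphism $\psi\colon\mathfrak{g}\to\tilde{\mathfrak{g}}$ lying over $\operatorname{id}_{\mathfrak{h}}$ and restricting to $\varphi$ on $V$. As $G$, $\tilde G$, and the fiber $V$ are simply connected, $\psi$ and $\varphi$ integrate to Lie group isomorphisms, producing the claimed commutative diagram. I expect no genuinely hard step here: beyond the routine verification of the cocycle identity and the application of \cref{lemma-homomorphism-central-extension}, the only part needing attention is the invertibility of $\varphi$ in the degenerate case, handled by the splitting argument above.
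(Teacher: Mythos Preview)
Your proof is correct and follows essentially the same approach as the paper: both arguments reduce to finding an invertible $\varphi$ and a $1$-form $\mu$ with $\varphi\circ\rho-\tilde{\rho}=d_0\mu$, then invoke \cref{lemma-homomorphism-central-extension}, and both handle the possible degeneracy (when the $\rho^j$ are linearly dependent) by a kernel/complement splitting of the two surjections $\RR^m\to W$. Your construction of $\Phi$ via explicit complements $C,\widetilde C$ is exactly the paper's short-exact-sequence argument written out concretely.
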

\begin{proof}
    By the assumption on spanned subspaces, there exists an invertible matrix $A=(a_{ij})\in\RR^{m\times m}$ such that $\tilde{ \rho}^i + d_0 \omega^i = \sum_{j=1}^ma_{ij} \rho^j$. Indeed, denote $W = \operatorname{span}\{ \rho^{1},\ldots, \rho^{m}\}$ and consider the two maps $E_1,E_2\colon\RR^m\to W$ defined by $E_1(e_i) =  \rho^i$ and $E_2(e_i) = \tilde{ \rho}^i+d_0 \omega^i$, using the standard basis of $\mathbb{R}^m$. Fixing any isomorphism $L\colon \ker E_1\to\ker E_2$, there exists an isomorphism $A \colon \mathbb{R}^{m} \rightarrow \mathbb{R}^m$ with the short exact sequence of a pair
    \begin{center}
        \begin{tikzcd}
                0\rar& \ker E_1 \rar\dar{L}& \RR^m\dar{A} \rar{E_1}& W\dar{\operatorname{id}}\rar& 0 \\
                0\rar& \ker E_2 \rar& \RR^m \rar{E_2}& W\rar& 0 
        \end{tikzcd}
    \end{center}
    If we let $ \varphi\colon V\to V$ be the linear map whose matrix in the basis $v_1,\ldots,v_m$ is $A$, we have $ \varphi\circ \rho - \tilde{ \rho} = d_0 \omega$ for $ \omega=\sum_{j=1}^m v_j  \omega^j$ and the map 
    \begin{equation*}
         \psi\colon \mathfrak{g}\to\tilde{ \mathfrak{g}},\quad  \psi(X+Y) = X + ( \omega(X) +  \varphi(Y)), \quad X\in \mathfrak{h}, \; Y\in V
    \end{equation*}
    is a Lie algebra homomorphism by \cref{lemma-homomorphism-central-extension}. Since $ \varphi$ is invertible, $ \psi$ is an isomorphism of Lie groups.
\end{proof}

\begin{remark}\label{remark-carnot-central-extension-isomorphism-tilting}
In our applications of \cref{lemma-carnot-central-extension-isomorphism}, the extension $ V \overset{\iota}{\to}  G\overset{ \pi}{\to} H$ will be a central extension of Carnot groups in which case we will define a Carnot structure on $\tilde{ G}$ through the isomorphism $ \psi \colon  G \to \tilde{ G}$ given by the lemma. More precisely, the Lie algebra $\tilde{ \mathfrak{g}}= \mathfrak{h} \oplus  V$ has a projection $\tilde{ \pi}_{*} \colon \tilde{ \mathfrak{g}} \to  \mathfrak{h}$. Consider $\iota_{*} \colon  V \to  \mathfrak{g}$ and the right-inverse $\sigma \colon  \mathfrak{h} \to \iota_{*}( V)^{\perp}$ of $ \pi_{*} \colon  \mathfrak{g} \to  \mathfrak{h}$. Define an inner product and grading on $\tilde{ \mathfrak{g}}$ for which $\tilde{\iota}_{*} \coloneqq  \psi \circ \iota_{*} \colon  V \to \tilde{ \mathfrak{g}}$ and $\tilde{\sigma} \coloneqq  \psi \circ \sigma \colon  \mathfrak{h} \to \tilde{ \mathfrak{g}}$ are graded isometries with orthogonal images. Then $\tilde{ \pi}_{*}$ is a submetry with $\ker( \tilde{ \pi}_{*} )^{\perp} = \operatorname{im}( \tilde{\sigma} )$, and the central extension $ V \overset{\tilde{\iota}}{\to} \tilde{ G} \overset{\tilde{ \pi}}{\to}  H$ by $\tilde{ \rho}$ is a central extension of Carnot groups.
\end{remark}

The following lemma allows us to generate further central extensions of Carnot groups from graded linear maps. We use the construction in \Cref{lemma-existence-of-lift-in-intermediate-extension-implies-lift} as a simplifying tool for the proofs of our main results.
\begin{lemma}\label{lemm:centralextension:morphism}
    Let $ V_1\to G_1\to H_1$ and $ V_2\to G_2\to H_2$ be central extensions of Carnot groups by $2$-cocycles $ \rho_1$ and $ \rho_2$, respectively, and let $ \varphi \colon  V_1 \to  V_2$ be a graded linear map. Then $ \hat{\rho}_1\coloneqq \varphi\circ \rho_1$ is a $2$-cocycle and defines a central extension $ \operatorname{im}( \varphi) \to  \hat{\mathfrak{g}}_1 \to  \mathfrak{h}_1$ by $ \hat{\rho}_1$. The central extension $\operatorname{im}( \varphi) \to  \hat{G}_1\to H_1$ by $ \hat{\rho}_1$ is a central extension of Carnot groups when equipped with the following structure.
    \begin{enumerate}
        \item The stratification of the Lie algebra is given by $  \hat{\mathfrak{g}}_1^{[j]} =  \mathfrak{h}_1^{[j]} \oplus  \varphi(V_1^{[j]})$ for $j \geq 1$.
        \item The inner product on $ \hat{\mathfrak{g}}_1$ is the direct sum of the inner products on  $ \mathfrak{h}_1$ and $\operatorname{im}( \varphi) \subset  \mathfrak{g}_2$.
    \end{enumerate}
    Moreover, the map $ \psi \colon  G_1 \to  \hat{G}_1$ given by
    \begin{equation*}
         \psi_{*}\colon \mathfrak{g}_1= \mathfrak{h}_1\oplus V_1\to \hat{\mathfrak{g}}_1= \mathfrak{h}_1\oplus\operatorname{im}( \varphi),\quad  \psi_{*}(X+Y) = X +  \varphi(Y),
	\end{equation*}
    is a surjective graded homomorphism.
\end{lemma}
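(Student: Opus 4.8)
The plan is to verify the assertions in the order stated, using the homomorphism $\psi_*$ to transport the Carnot structure of $\mathfrak g_1$ onto $\hat{\mathfrak g}_1$. The cocycle claim is immediate: $d_0$ is $\mathbb{R}$-linear in the form and its defining formula only post-composes the form with brackets, so $d_0(\varphi\circ\rho_1)=\varphi\circ d_0\rho_1=0$, and the values of $\varphi\circ\rho_1$ lie in $\operatorname{im}(\varphi)$. Thus $\hat\rho_1$ is an $\operatorname{im}(\varphi)$-valued $2$-cocycle and \cref{def-central-extension} produces the central extension $\operatorname{im}(\varphi)\to\hat{\mathfrak g}_1\to\mathfrak h_1$, with $\hat{\mathfrak g}_1=\mathfrak h_1\oplus\operatorname{im}(\varphi)$ and the stated bracket. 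Since $\varphi$ is graded, $\operatorname{im}(\varphi)=\bigoplus_j\varphi(V_1^{[j]})$ with $\varphi(V_1^{[j]})\subseteq V_2^{[j]}$, so $\hat{\mathfrak g}_1^{[j]}\coloneqq\mathfrak h_1^{[j]}\oplus\varphi(V_1^{[j]})$ is a genuine vector-space grading of $\hat{\mathfrak g}_1$; that it is in fact a stratification is addressed next.

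I would then study the linear map $\psi_*\colon\mathfrak g_1\to\hat{\mathfrak g}_1$, $\psi_*(X+Y)=X+\varphi(Y)$, from the statement. Applying \cref{lemma-homomorphism-central-extension}\eqref{lemma-homomorphism-central-extension-existence} with $L=\operatorname{id}_{\mathfrak h_1}$, the map $\varphi\colon V_1\to\operatorname{im}(\varphi)$, and $\mu=0$ — the required identity $\varphi\circ\rho_1-\operatorname{id}^{*}\hat\rho_1=0$ holding trivially because $\hat\rho_1=\varphi\circ\rho_1$ — shows $\psi_*$ is a Lie algebra homomorphism; it is plainly surjective, and, recalling that the splitting of a central extension of Carnot groups is grading-compatible, i.e.\ $\mathfrak g_1^{[j]}=\mathfrak h_1^{[j]}\oplus V_1^{[j]}$ (cf.\ \cref{rem:centralextension:carnot}), it maps $\mathfrak g_1^{[j]}$ \emph{onto} $\hat{\mathfrak g}_1^{[j]}$. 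Pushing the stratification of $\mathfrak g_1$ forward along $\psi_*$ then yields $[\hat{\mathfrak g}_1^{[i]},\hat{\mathfrak g}_1^{[j]}]=\psi_*([\mathfrak g_1^{[i]},\mathfrak g_1^{[j]}])\subseteq\hat{\mathfrak g}_1^{[i+j]}$ and $[\hat{\mathfrak g}_1^{[1]},\hat{\mathfrak g}_1^{[k]}]=\psi_*([\mathfrak g_1^{[1]},\mathfrak g_1^{[k]}])=\psi_*(\mathfrak g_1^{[k+1]})=\hat{\mathfrak g}_1^{[k+1]}$ for all $k\geq1$, so $\{\hat{\mathfrak g}_1^{[j]}\}$ is a stratification and $\hat{\mathfrak g}_1$ is in particular nilpotent. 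I take $\hat G_1$ to be the simply connected Carnot group with Lie algebra $\hat{\mathfrak g}_1$ and with the inner product of item~(2) on $\hat{\mathfrak g}_1^{[1]}$.

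It then remains to check the four conditions of \cref{definition-carnot-central-extension} and the properties of $\psi$. Items (i)--(ii) hold since $\hat{\mathfrak g}_1^{[j]}=\mathfrak h_1^{[j]}\oplus\varphi(V_1^{[j]})$ exhibits both the inclusion $\operatorname{im}(\varphi)\hookrightarrow\hat{\mathfrak g}_1$ and the projection $\hat{\mathfrak g}_1\to\mathfrak h_1$ as graded. For (iii)--(iv), with the direct-sum inner product the layers $\mathfrak h_1^{[i]}\oplus\varphi(V_1^{[i]})$ are pairwise orthogonal — the $\mathfrak h_1^{[i]}$ are pairwise orthogonal in $\mathfrak h_1$, the $\varphi(V_1^{[i]})\subseteq\mathfrak g_2^{[i]}$ are pairwise orthogonal in $\mathfrak g_2$, and $\mathfrak h_1\perp\operatorname{im}(\varphi)$ by construction — so $\hat G_1$ is a Carnot group; the inclusion $\operatorname{im}(\varphi)\hookrightarrow\hat{\mathfrak g}_1$ is an isometry onto its image, and the projection $\hat{\mathfrak g}_1\to\mathfrak h_1$ is a submetry because its kernel $\operatorname{im}(\varphi)$ has orthogonal complement $\mathfrak h_1$, which it maps identically onto $\mathfrak h_1$. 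Finally, as $G_1$ is simply connected, $\psi_*$ integrates to a unique Lie group homomorphism $\psi\colon G_1\to\hat G_1$, which is graded because $\psi_*$ is and surjective because $\psi_*$ is, via $\psi\circ\exp_{G_1}=\exp_{\hat G_1}\circ\psi_*$ and the surjectivity of $\exp$ for simply connected nilpotent groups. The one step requiring real care is the grading-compatibility $\mathfrak g_1^{[j]}=\mathfrak h_1^{[j]}\oplus V_1^{[j]}$ used above: it is what makes $\psi_*$ graded and lets the stratification transfer, and it is exactly what the graded isometric section in \cref{rem:centralextension:carnot} supplies, with everything else being routine bookkeeping.
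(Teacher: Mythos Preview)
Your proof is correct and follows essentially the same route as the paper: both appeal to \cref{lemma-homomorphism-central-extension} with $L=\operatorname{id}$ and $\mu=0$ to see that $\psi_*$ is a Lie algebra homomorphism, and then verify the Carnot central extension axioms of \cref{definition-carnot-central-extension}. The only cosmetic difference is that you establish the stratification by pushing $\mathfrak g_1^{[k+1]}=[\mathfrak g_1^{[1]},\mathfrak g_1^{[k]}]$ forward along the surjective graded homomorphism $\psi_*$, whereas the paper checks directly that $\rho_1(\mathfrak h_1^{[1]},\mathfrak h_1^{[j]})\subset V_1^{[j+1]}$; these are two phrasings of the same computation.
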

\begin{proof}
    The fact that $ \hat{\rho}_1$ is a $2$-cocycle follows from the linearity of $ \varphi$. By \cref{lemma-homomorphism-central-extension}, the map $ \psi_{*}$ is a Lie algebra homomorphism.
    
    We observe that $\rho_1( \mathfrak{h}_1^{[1]}, \mathfrak{h}_1^{[j]} ) \subset V_1^{[j+1]}$ which immediately implies that $[\hat{\mathfrak{g}}_1^{[1]}, \hat{\mathfrak{g}}_1^{[j]}] = \hat{\mathfrak{g}}_1^{[j+1]}$ for $j \geq 1$. Thus (1) indeed defines a stratification. The requirements on the inner products are built-in to (2). The fact that $\psi_{*}$ is graded and surjective is clear. In conclusion, the central extension $\operatorname{im}( \varphi) \to  \hat{G}_1\to H_1$ by $ \hat{\rho}_1$ is a central extension of Carnot groups.
\end{proof}

\section{Abelian factors in contact lifts}\label{sec-structure-of-lifts}
In order to simplify later proofs, we show that any abelian Carnot group factors added by the central extensions are irrelevant for our existence results for contact lifts.

For this section, we consider central extensions of Carnot groups $ V_1\to G_1\to H_1$ and $ V_2\to G_2\to H_2$ and a smooth contact map $ f\colon U_1\to H_2$ from a domain $ U_1\subset H_1$.
Let $ W_1\subset V_1$ be the horizontal component of the central extension $ V_1\to G_1\to H_1$ so that the extended Carnot group can decomposed as a direct product of Carnot groups $ G_1 \simeq \tilde{G}_1\times  W_1$, where $\tilde{G}_1= G_1/ W_1$. We denote similarly on the codomain the horizontal component $ W_2\subset V_2$ and the quotient $\tilde{G}_2= G_2/ W_2$.

\subsection{Elimination of abelian factors from the codomain}\label{sec-abelian-factorization:codomain}

On the codomain, the contact lift condition imposes no restrictions on the abelian factor.

\begin{lemma}\label{lemma-lift-image-abelian-component}
    If there exists a smooth contact lift $ F\colon G_1\supset \pi_1^{-1}( U_1)\to G_2$ of $ f$, then there also exist a smooth contact lift $\tilde{ F}\colon \pi_1^{-1}( U_1)\to\tilde{G}_2$ of $ f$ and a smooth map $h\colon \pi_1^{-1}( U_1)\to  W_2$ such that $ F$ is the pointwise product $ F = \tilde{ F}\cdot h$.
    Conversely, if $\tilde{ F}\colon \pi_1^{-1}( U_1)\to\tilde{G}_2$ is a smooth contact lift of $ f$, then the pointwise product $ F = \tilde{ F}\cdot h$ is a smooth contact lift of $ f$ for every smooth map $h\colon \pi_1^{-1}( U_1)\to  W_2$.
\end{lemma}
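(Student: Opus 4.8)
\emph{Proof proposal.}

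The plan is to exploit the direct product splitting $G_2 \simeq \tilde{G}_2 \times W_2$: the abelian factor $W_2$ is invisible both to the projection $\pi_2$ and to the contact condition, so a lift of $f$ into $G_2$ decomposes into a factor in $\tilde{G}_2$, which carries all the constraints, and a factor in $W_2$, which is completely free.

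First I would record the structural facts about the splitting. Since the $2$-cocycle $\rho_2$ respects weights, $\rho_2(\mathfrak{h}_2^{[i]}, \mathfrak{h}_2^{[j]}) \subset V_2^{[i+j]}$, so it never takes values in $W_2 = V_2^{[1]}$; hence $W_2$ is a graded central ideal of $\mathfrak{g}_2$ lying entirely in the horizontal layer, which is what produces the splitting $G_2 \simeq \tilde{G}_2 \times W_2$. From this I extract: (a) a graded quotient homomorphism $q \colon G_2 \to \tilde{G}_2 = G_2/W_2$ and a graded projection $p \colon G_2 \to W_2$ onto the abelian factor; (b) the factorization $\pi_2 = \tilde{\pi}_2 \circ q$, where $\tilde{\pi}_2 \colon \tilde{G}_2 \to H_2$ is the projection of the induced central extension of $H_2$ (valid because $W_2 \subset \ker\pi_2$); (c) the identity $\mathfrak{g}_2^{[1]} = \tilde{\mathfrak{g}}_2^{[1]} \oplus W_2$ on horizontal layers under $\mathfrak{g}_2 \simeq \tilde{\mathfrak{g}}_2 \oplus W_2$, i.e., the whole abelian factor is horizontal.

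Next I would set up the correspondence. Given a smooth $F \colon \pi_1^{-1}(U_1) \to G_2$, put $\tilde{F} = q \circ F$ and $h = p \circ F$; conversely, given smooth $\tilde{F} \colon \pi_1^{-1}(U_1) \to \tilde{G}_2$ and $h \colon \pi_1^{-1}(U_1) \to W_2$, put $F = \tilde{F} \cdot h$, the pointwise product in $G_2$ after including the two factors. Under $G_2 \simeq \tilde{G}_2 \times W_2$ these operations are mutually inverse, and two equivalences remain to be checked. For the lifting condition: $\pi_2 \circ F = f \circ \pi_1$ iff $\tilde{\pi}_2 \circ \tilde{F} = f \circ \pi_1$, which is immediate from $\pi_2 = \tilde{\pi}_2 \circ q$ and $q \circ F = \tilde{F}$ (the $W_2$-factor is killed by $q$). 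For the contact condition: $F$ is contact iff $\tilde{F}$ is contact. Here I would compute the left-trivialized derivative of $F = \tilde{F} \cdot h$ at a point $g$; since $\tilde{F}$ and $h$ take values in commuting direct factors, it equals $\big((L_{\tilde{F}(g)^{-1}})_* d\tilde{F}(g),\ (L_{h(g)^{-1}})_* dh(g)\big) \in \tilde{\mathfrak{g}}_2 \oplus W_2$, and by (c) the second entry always lies in the horizontal layer, so the left-trivialized derivative of $F$ is horizontal exactly when that of $\tilde{F}$ is. In particular any smooth $h$ into $W_2$ is automatically contact as a map into $G_2$ and never spoils contactness of the product.

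Combining the two equivalences yields both directions of the lemma: from a smooth contact lift $F$ we obtain the smooth contact lift $\tilde{F} = q \circ F$ together with the smooth map $h = p \circ F$ satisfying $F = \tilde{F} \cdot h$; and from a smooth contact lift $\tilde{F}$ and an arbitrary smooth $h \colon \pi_1^{-1}(U_1) \to W_2$, the product $F = \tilde{F} \cdot h$ is a smooth contact lift of $f$. The only step that is not pure bookkeeping is the derivative computation for the contact equivalence, where one has to transport the left-trivialization correctly through the direct-product identification and use that $W_2$ lies entirely in the horizontal layer $\mathfrak{g}_2^{[1]}$.
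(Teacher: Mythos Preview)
Your proposal is correct and follows essentially the same approach as the paper: use the direct product splitting $G_2 \simeq \tilde{G}_2 \times W_2$ to decompose $F$ into its two factors, then observe that the $W_2$-component is irrelevant both for the lift condition (since $W_2 \subset \ker\pi_2$) and for the contact condition (since $W_2$ is entirely horizontal, equivalently an abelian Carnot group). Your treatment is a bit more explicit about why the splitting exists and about the left-trivialized derivative computation, but the argument is the same.
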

\begin{proof}
    Since $ G_2$ is a direct product of the Carnot groups $\tilde{G}_2$ and $ W_2$, any map $ F\colon G_1\supset \pi_1^{-1}( U_1)\to G_2$ may be written as a pointwise product $ F = \tilde{ F}\cdot h$ for some maps $\tilde{ F}\colon \pi_1^{-1}( U_1)\to\tilde{G}_2$ and $h\colon \pi_1^{-1}( U_1)\to  W_2$.

    For a direct product of Carnot groups, the horizontal distribution is the sum of the horizontal distributions. Hence $ F$ is contact if and only if both $\tilde{ F}$ and $h$ are contact. Since $ W_2$ is an abelian Carnot group, the contact property for $h$ trivially holds, so $ F$ is contact if and only if $\tilde{ F}$ is contact.

    Finally, we observe that $ W_2\subset V_2\subset\ker \pi_2$, so $ \pi_2 \circ  F = \tilde{ \pi}_2\circ \tilde{ F}$. Hence $ F$ is a lift of $ f$ if and only if $\tilde{ F}$ is a lift of $ f$.
\end{proof}

\subsection{Elimination of abelian factors from the domain}
On the domain, any abelian factor in a contact lift can be quotiented away when the central extension on the codomain does not increase horizontal rank.

\begin{lemma}\label{lemma-lift-domain-abelian-component}
    If there exists a smooth contact lift $\tilde{ F}\colon G_1\supset \pi_1^{-1}( U_1)\to\tilde{G}_2$ of $ f$, then $\tilde{ F}(gx) = \tilde{ F}(g)$ for all $g\in \pi_1^{-1}( U_1)$ and $x\in W_1$, and the induced quotient map $\tilde{\tilde{ F}}\colon \tilde{G}_1\supset\tilde{ \pi}_1^{-1}( U_1)\to\tilde{G}_2$ is a smooth contact lift of $ f$.
    Conversely, if $\tilde{\tilde{ F}}\colon\tilde{ \pi}_1^{-1}( U_1)\to\tilde{G}_2$ is a smooth contact lift of $ f$, then composing with the quotient map $ \pi\colon  G_1\to\tilde{G}_1$ defines a smooth contact lift $\tilde{ F} = \tilde{\tilde{ F}}\circ \pi|_{ \pi_1^{-1}( U_1) }$ of $ f$.
\end{lemma}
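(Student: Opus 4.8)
The plan is to prove the two implications separately. The converse is a routine composition argument, so the substance lies in the forward implication: one must show that a contact lift $\tilde{ F}$ with values in $\tilde{G}_2$ is automatically constant along the $ W_1$-fibers of $ \pi_1$. The key --- and the only genuinely nontrivial --- point is that $\tilde{G}_2 =  G_2/ W_2$ has been set up so that its central extension $\ker \tilde{ \pi}_{2*} \to \tilde{ \mathfrak{g}}_2 \to  \mathfrak{h}_2$ contributes nothing to the horizontal layer; concretely $\tilde{ \mathfrak{g}}_2^{[1]} \cap \ker \tilde{ \pi}_{2*} = 0$, because the horizontal component has been quotiented away and layers of different weight are orthogonal (see \cref{definition-carnot-central-extension}). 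This forces any horizontal curve in $\tilde{G}_2$ that stays inside a single coset of $\ker \tilde{ \pi}_2$ to be constant, and that observation drives the whole argument.

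\emph{Forward implication.} Fix $g \in  \pi_1^{-1}( U_1)$ and $X \in  V_1^{[1]}$, the horizontal layer of $ V_1$, which satisfies $ V_1^{[1]} \subset  \mathfrak{g}_1^{[1]}$ and integrates to $ W_1 = \exp(  V_1^{[1]} )$. The curve $t \mapsto g\exp(tX)$ is horizontal in $ G_1$ since its left-trivialized derivative is the constant $X \in  \mathfrak{g}_1^{[1]}$, so $t \mapsto \tilde{ F}(g\exp(tX))$ is a horizontal curve in $\tilde{G}_2$ by the contact property. On the other hand $ W_1 \subset  V_1 = \ker  \pi_1$, so $ \pi_1(g\exp(tX)) =  \pi_1(g)$; in particular the curve remains in $ \pi_1^{-1}( U_1)$, and $\tilde{ \pi}_2( \tilde{ F}(g\exp(tX)) ) =  f(  \pi_1(g) )$ is constant, i.e.\ the image curve stays in one coset of $\ker \tilde{ \pi}_2$. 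Its left-trivialized derivative therefore lies in $\tilde{ \mathfrak{g}}_2^{[1]} \cap \ker \tilde{ \pi}_{2*} = 0$, so the curve is constant: $\tilde{ F}(g\exp(tX)) = \tilde{ F}(g)$. As $\exp \colon  V_1^{[1]} \to  W_1$ is onto, this gives $\tilde{ F}(gx) = \tilde{ F}(g)$ for all $x \in  W_1$.

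\emph{Descent and lift property.} Using the product decomposition $ G_1 \simeq \tilde{G}_1 \times  W_1$, the quotient $ \pi \colon  G_1 \to \tilde{G}_1$ is a coordinate projection with a section $s \colon \tilde{G}_1 \to  G_1$, $\tilde{g} \mapsto (\tilde{g}, e)$, which is a graded homomorphism; and $ W_1 \subset \ker  \pi_1$ gives $ \pi_1 = \tilde{ \pi}_1 \circ  \pi$ and $ \pi^{-1}( \tilde{ \pi}_1^{-1}( U_1) ) =  \pi_1^{-1}( U_1)$, so in particular $s$ maps $\tilde{ \pi}_1^{-1}( U_1)$ into $ \pi_1^{-1}( U_1)$. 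By the $ W_1$-invariance just proved, $\tilde{ F}$ descends to a map $\tilde{ \tilde{ F}} \colon \tilde{ \pi}_1^{-1}( U_1) \to \tilde{G}_2$ with $\tilde{ F} = \tilde{ \tilde{ F}} \circ  \pi$, and then $\tilde{ \tilde{ F}} = \tilde{ F} \circ s$ is smooth and contact, being the composition of a smooth contact map with a graded homomorphism. Finally $\tilde{ \pi}_2 \circ \tilde{ \tilde{ F}} \circ  \pi = \tilde{ \pi}_2 \circ \tilde{ F} =  f \circ  \pi_1 =  f \circ \tilde{ \pi}_1 \circ  \pi$, and surjectivity of $ \pi$ yields $\tilde{ \pi}_2 \circ \tilde{ \tilde{ F}} =  f \circ \tilde{ \pi}_1$, so $\tilde{ \tilde{ F}}$ is a smooth contact lift of $ f$.

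\emph{Converse implication.} Given a smooth contact lift $\tilde{ \tilde{ F}} \colon \tilde{ \pi}_1^{-1}( U_1) \to \tilde{G}_2$, set $\tilde{ F} = \tilde{ \tilde{ F}} \circ  \pi|_{ \pi_1^{-1}( U_1)}$. Since $ \pi$ is a graded homomorphism of Carnot groups, it is smooth and contact, hence so is $\tilde{ F}$; and $\tilde{ \pi}_2 \circ \tilde{ F} = \tilde{ \pi}_2 \circ \tilde{ \tilde{ F}} \circ  \pi =  f \circ \tilde{ \pi}_1 \circ  \pi =  f \circ  \pi_1$ shows $\tilde{ F}$ is a lift of $ f$. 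The main obstacle throughout is the grading bookkeeping isolated in the first paragraph; the remaining steps are just functoriality of contact maps under homomorphisms and quotients.
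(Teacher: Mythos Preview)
Your proof is correct and follows essentially the same approach as the paper: both argue that the horizontal curve $t\mapsto g\exp(tX)$ maps under $\tilde F$ to a horizontal curve in $\tilde G_2$ whose $\tilde\pi_2$-projection is constant, and then use that $\tilde{\mathfrak g}_2^{[1]}\cap\ker\tilde\pi_{2*}=0$ (equivalently, $\operatorname{rank}\tilde G_2=\operatorname{rank} H_2$) to conclude the curve is constant. The only cosmetic difference is that you realize $\tilde{\tilde F}$ as $\tilde F\circ s$ via an explicit section of the product decomposition, whereas the paper argues directly that the derivative of $\tilde F$ factors through the quotient; both are equally valid.
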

\begin{proof}
    The converse claim is immediate since the quotient projection $ \pi\colon  G_1\to\tilde{G}_1$ is contact and the composition of contact maps is contact.

    For the other claim, suppose that we have a contact lift $\tilde{ F}\colon G_1\supset \pi_1^{-1}( U_1)\to\tilde{G}_2$ of $ f$. Fix $g\in \pi_1^{-1}( U_1)$ and $x\in W_1$, and consider the horizontal line segment 
    \begin{equation*}
        \gamma\colon[0,1]\to  \pi_1^{-1}( U_1)\subset G_1,\quad \gamma(t) = g\cdot tx.
    \end{equation*}
    Since $\tilde{ F}$ is a contact lift of $ f$, and by construction $\operatorname{rank}\tilde{G}_2=\operatorname{rank} H_2$, the horizontal curve $\tilde{ F}\circ\gamma$ in $\tilde{G}_2$ is the unique lift of the horizontal curve $ f\circ \pi_1\circ \gamma$ starting from the point $\tilde{ F}\circ\gamma(0)$. 
    However, the projection is the constant curve $ f\circ \pi_1\circ\gamma(t) =  f(g)$, so also the lift $\tilde{ F}\circ\gamma$ is a constant curve. Hence
    \begin{equation*}
        \tilde{ F}(gx) = \tilde{ F}\circ\gamma(1)=\tilde{ F}\circ\gamma(0)=\tilde{ F}(g).
    \end{equation*}
    Consequently, there exists a unique map $\tilde{\tilde{ F}}\colon \tilde{G}_1\supset\tilde{ \pi}_1^{-1}( U_1)\to\tilde{G}_2$ such that $\tilde{ F} = \tilde{\tilde{ F}}\circ \pi$.

    Since the quotient projection $ \pi\colon  G_1\to\tilde{G}_1$ is surjective, $\tilde{\tilde{ F}}$ is a lift of $ f$. The map $\tilde{\tilde{ F}}$ is contact since also the derivative of $\tilde{ F}$ factors through the quotient projection $ \pi\colon  G_1\to\tilde{G}_1$ and the horizontal distribution of the direct product $ G_1 \simeq \tilde{G}_1\times W_1$ contains the horizontal distribution of $\tilde{G}_1$.
\end{proof}

\section{Contact lifts via path lifting}\label{sec:lifts}
    This section has three subsections. The first subsection connects central extensions and horizontal lifts of curves. The second subsection rephrases the existence of contact lifts in terms of horizontal curves. The third subsection reformulates the results of the second subsection in terms of horizontal exactness.

\subsection{A potential of an extension cocycle}
    The subsequent lemma constructs a potential for a cocycle defining a central extension. 
\begin{lemma}\label{lemma-explift-of-horizontal-curve}
    Let $ V\overset{\iota}{\to} G\overset{ \pi}{\to} H$ be a central extension of Carnot groups by $ \rho$ such that $\operatorname{rank}( G)=\operatorname{rank}( H)$.
    There exists a 1-form $ \alpha\in\Omega^1( H; V)$ such that
    \begin{enumerate}[(i)]
        \item\label{lemma-explift-enum-alpha-is-potential} $d \alpha =  \rho$, and
        \item\label{lemma-explift-enum-exp-alpha-integral-gives-missing-component} if $ \gamma\colon[0,1]\to  G$ is a horizontal curve starting from $ \gamma(0)= e_{ G}$, then
        \begin{equation*}
            \log_{ G}( \gamma(1)) = \log_{ H}( \pi\circ \gamma(1)) + \int_{ \pi\circ \gamma} \alpha \in  \mathfrak{h}\oplus V =  \mathfrak{g}.
        \end{equation*}
    \end{enumerate}
\end{lemma}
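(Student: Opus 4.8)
The plan is to exhibit $\pi\colon G\to H$ as a principal $V$-bundle, endow it with the tautological $V$-valued connection $1$-form that annihilates the stratified horizontal layer, and define $\alpha$ as minus the pullback of this form by the exponential section; then \eqref{lemma-explift-enum-exp-alpha-integral-gives-missing-component} is precisely the parallel transport formula for this connection.

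Let $\sigma\colon \mathfrak{h}\to\ker(\pi_{*})^{\perp}\subset\mathfrak{g}$ be the isometric graded section of $\pi_{*}$ provided by \cref{definition-carnot-central-extension}, and identify $\mathfrak{h}$ with $\sigma(\mathfrak{h})$ and $V$ with $\iota_{*}(V)$ inside $\mathfrak{g}$. The rank hypothesis enters as follows: since $\pi_{*}$ is graded and surjective, $\pi_{*}(\mathfrak{g}^{[1]})$ generates $\mathfrak{h}$ and hence equals $\mathfrak{h}^{[1]}$, so $\operatorname{rank}(G)=\operatorname{rank}(H)$ forces $\pi_{*}|_{\mathfrak{g}^{[1]}}\colon \mathfrak{g}^{[1]}\to\mathfrak{h}^{[1]}$ to be a linear isomorphism; consequently $\iota_{*}(V)\cap\mathfrak{g}^{[1]}=0$ and $\mathfrak{g}^{[1]}=\sigma(\mathfrak{h}^{[1]})\subset\sigma(\mathfrak{h})$. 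Now let $\theta\in\Omega^{1}(G;V)$ be the left-invariant extension of the projection $\mathfrak{g}=\sigma(\mathfrak{h})\oplus\iota_{*}(V)\to V$. By left-invariance, $d\theta(X,Y)=-\theta([X,Y]_{\mathfrak{g}})$ on left-invariant vector fields, so the bracket of \cref{def-central-extension} yields $d\theta=-\pi^{*}\rho$. Moreover $\theta$ is a principal connection for $\pi\colon G\to H$: it restricts to the identity on the fibre directions $(L_{g})_{*}\iota_{*}(V)$, and $R_{z}^{*}\theta=L_{z}^{*}\theta=\theta$ for $z\in\exp_{G}(\iota_{*}(V))=\ker\pi$, since $\ker\pi$ is central. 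Setting $s\coloneqq\exp_{G}\circ\sigma\circ\log_{H}\colon H\to G$, which is smooth and satisfies $\pi\circ s=\operatorname{id}_{H}$, I put $\alpha\coloneqq-s^{*}\theta\in\Omega^{1}(H;V)$; then $d\alpha=-s^{*}d\theta=s^{*}\pi^{*}\rho=\rho$, which is \eqref{lemma-explift-enum-alpha-is-potential}.

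For \eqref{lemma-explift-enum-exp-alpha-integral-gives-missing-component}, let $\gamma\colon[0,1]\to G$ be horizontal with $\gamma(0)=e_{G}$, and put $\eta\coloneqq\pi\circ\gamma$. Since $\pi(s(\eta(t)))=\eta(t)=\pi(\gamma(t))$ and $\ker\pi=\exp_{G}(\iota_{*}(V))$, there is a unique smooth $w\colon[0,1]\to V$ with $\gamma(t)=s(\eta(t))\cdot\exp_{G}(\iota_{*}(w(t)))$ and $w(0)=0$. The containment $\mathfrak{g}^{[1]}\subset\ker\theta_{e}$ gives $\theta(\dot{\gamma})\equiv 0$; expanding $\dot{\gamma}$ by the product rule and using centrality of $\exp_{G}(\iota_{*}(w(t)))$ together with $R_{z}^{*}\theta=\theta$, one obtains $0=\theta(\dot{\gamma}(t))=(s^{*}\theta)_{\eta(t)}(\dot{\eta}(t))+\dot w(t)$, hence $\dot w(t)=\alpha_{\eta(t)}(\dot{\eta}(t))$ and $w(1)=\int_{\pi\circ\gamma}\alpha$. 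Finally, since $\iota_{*}(w(1))$ is central, the Baker--Campbell--Hausdorff formula gives $\gamma(1)=s(\eta(1))\exp_{G}(\iota_{*}(w(1)))=\exp_{G}\bigl(\sigma(\log_{H}\eta(1))+\iota_{*}(w(1))\bigr)$, so $\log_{G}(\gamma(1))=\sigma(\log_{H}\eta(1))+\iota_{*}(w(1))$, which under the identifications $\mathfrak{h}\cong\sigma(\mathfrak{h})$ and $V\cong\iota_{*}(V)$ is exactly $\log_{H}(\pi\circ\gamma(1))+\int_{\pi\circ\gamma}\alpha$.

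I expect the main difficulty to be bookkeeping rather than conceptual: fixing sign and orientation conventions so that $d\theta=-\pi^{*}\rho$ holds verbatim, and carefully justifying the product-rule identity $\theta(\dot{\gamma})=(s^{*}\theta)_{\eta}(\dot{\eta})+\dot w$, i.e.\ that along the $V$-bundle $G\to H$ the $\theta$-component of a curve's velocity splits into the section's contribution plus $\dot w$. Both are routine once the principal-bundle picture is in place; the only genuine input is the rank equality, which is what places the sub-Riemannian horizontal layer inside $\ker\theta_{e}$ and thereby makes horizontal curves $\theta$-horizontal.
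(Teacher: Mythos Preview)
Your proof is correct and constructs exactly the same form as the paper: both define $\theta$ as the left-invariant extension of the projection $\mathfrak{g}\to V$, both use $d\theta=-\pi^{*}\rho$, and your $\alpha=-s^{*}\theta$ coincides with the paper's $\alpha$ (indeed, pulling the paper's defining relation $\pi^{*}\alpha=dx-\theta$ back by your section $s$ gives $\alpha=d(x\circ s)-s^{*}\theta=-s^{*}\theta$, since $x\circ s\equiv 0$).

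The packaging differs: the paper works directly with the $V$-coordinate $x=\pi_{V}\circ\log_{G}$, defines $\alpha$ implicitly by showing $dx-\theta$ descends through $\pi$, and then obtains \eqref{lemma-explift-enum-exp-alpha-integral-gives-missing-component} in one line as $x(\gamma(1))=\int_{\gamma}dx=\int_{\gamma}(\theta+\pi^{*}\alpha)=\int_{\pi\circ\gamma}\alpha$. You instead frame $\theta$ as a principal connection, define $\alpha$ explicitly via the exponential section, and recover \eqref{lemma-explift-enum-exp-alpha-integral-gives-missing-component} as the parallel-transport equation $\dot w=\alpha(\dot\eta)$; your $w(t)$ is precisely the paper's $x(\gamma(t))$. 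The paper's route is marginally shorter for this particular lemma, while your principal-bundle framing is more portable and makes the role of the rank hypothesis (placing $\mathfrak{g}^{[1]}$ inside $\ker\theta_{e}$) slightly more transparent.
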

\begin{proof}
    The direct sum decomposition of the vector space $ \mathfrak{g}= \mathfrak{h}\oplus V$ defines a projection $ \pi_{ V} \colon  \mathfrak{g}\to V$. Consider the map $x=  \pi_{ V} \circ\log_{ G} \colon  G\to  V$,
    which gives the $ V$-component of exponential coordinates in $ G$. The projection $ \pi_{ V}$ can also be viewed as a $ V$-valued 1-form on $ \mathfrak{g}$. Let $ \theta\in\Omega^1( G; V)$ be its left-invariant extension.
    We claim that 
    \begin{equation}\label{eq-cocycle-potential-implicit-exp-coord-definition}
        dx- \theta= \pi^* \alpha
    \end{equation}
    for some 1-form $ \alpha\in\Omega^1( H; V)$, and that this 1-form $ \alpha$ has the required properties.

    To see that such a form $ \alpha$ exists, it suffices to verify that $(dx- \theta)(Y) = 0$ for any left-invariant vector field $Y$ with $Y( e_{ G}) \in\ker \pi =  V$. Since $ V$ is central, we have
    \begin{equation*}
        \log_{ G}(\exp(X)\exp(tY)) = X + tY
        \quad\text{for any $X \in  \mathfrak{g}$.}
    \end{equation*}
    Consequently, at any point $g=\exp(X)$, we have
    \begin{equation*}
        (d\log_{ G})_{g}(Y(g)) = \frac{d}{dt}\left.\log_{ G}(g\exp(tY))\right\vert_{t=0} = Y.
    \end{equation*}
    By the chain rule, we deduce
    \begin{equation*}
        dx(Y(g)) = d(  \pi_{ V} \circ \log_{ G} )(Y(g)) =  \pi_{ V} \circ  d\log_{ G}(Y(g)) =  \theta(Y(g)),
    \end{equation*}
    proving the required identity $(dx- \theta)(Y) = 0$. Hence \eqref{eq-cocycle-potential-implicit-exp-coord-definition} defines a 1-form $ \alpha\in\Omega^1( H; V)$.
    
    Property \eqref{lemma-explift-enum-alpha-is-potential} for $ \alpha$ follows immediately from \eqref{eq-cocycle-potential-implicit-exp-coord-definition} and the observation that $d \theta = - \pi^* \rho$.

    For property \eqref{lemma-explift-enum-exp-alpha-integral-gives-missing-component}, we first observe that
    \begin{equation*}
        \log_{ G}( \gamma(1)) = \log_{ H}( \pi\circ \gamma(1)) + x( \gamma(1))
    \end{equation*}
    by the definition of the map $x$. By \eqref{eq-cocycle-potential-implicit-exp-coord-definition}, we have $dx= \theta +  \pi^* \alpha$. Since $ \gamma$ is horizontal, we have $ \theta(\dot{ \gamma})=0$ almost everywhere. The initial point of the curve is the identity, so
    \begin{equation*}
        x( \gamma(1)) = x( \gamma(1)) - x( \gamma(0))
        = \int_{ \gamma}dx = \int_{ \gamma} \pi^* \alpha = \int_{ \pi\circ \gamma} \alpha,
    \end{equation*}
    proving the claim.
\end{proof}

\begin{remark}
    An explicit expression for the 1-form $ \alpha$ in \cref{lemma-explift-of-horizontal-curve} can be calculated from the derivative of the exponential map. The resulting expression evaluated at a point $g=\exp(X)\in G$, for a left-invariant vector field $Y$, has the form
    \begin{equation*}
         \alpha_g(Y(g)) =  \rho(X,\zeta(\ad X)Y),
    \end{equation*}
    where $\zeta$ is the analytic function $\zeta(z) = \frac{1}{1-e^{-z}} - \frac{1}{z}$. Here, nilpotency of $\ad X$ allows us to interpret $\zeta(\ad X)$ using a finite part of the power series expansion of $\zeta$.
\end{remark}

For the rest of this section, we fix central extensions of Carnot groups $ V_1\to G_1\to H_1$ and $ V_2\to G_2\to H_2$ by 2-cocycles $ \rho_1$ and $ \rho_2$, respectively.
\subsection{Preservation of closed horizontal curves}
In this subsection, we connect the existence of contact lifts to a condition on closed horizontal curves.

Given a Carnot group $ H$, a point $h \in  H$, and a domain $ U\subset H$, we denote by $ \Gamma_{\mathrm{LIP}}(h, U)$ the collection of closed Lipschitz curves $ \gamma \colon [0,1] \rightarrow  U$ based at $ \gamma(0) = h =  \gamma(1)$.

For a central extension $ V \to  G \to H$ of Carnot groups by a 2-cocycle $ \rho \in \Omega^{2}( H;  V)$, we denote by $ \Gamma^{\rho}_{\mathrm{LIP}}(h, U) \subset  \Gamma_{\mathrm{LIP}}(h, U)$ the subcollection which admit a horizontal lift to $ G$ that is closed. By \cref{lemma-explift-of-horizontal-curve}, and representing $ G$ as a direct product $ G\simeq \tilde{ G} \times W$, where $ W\subset V$ is the horizontal component of the central extension $V \to G \to H$, the subcollection can be characterized as
\begin{equation*}
     \Gamma^{\rho}_{\mathrm{LIP}}(h, U) = \{ \gamma\in \Gamma_{\mathrm{LIP}}(h, U): \int_{ \gamma} \alpha = 0 \},
\end{equation*}
where $ \alpha \in \Omega^{1}(  H;  V )$ satisfies $d \alpha= \rho$. Note that the collection is independent of the specific potential since $\int_{ \gamma} dy=0$ for any closed curve $ \gamma\in \Gamma_{\mathrm{LIP}}(h, U)$ and any exact 1-form $dy\in\Omega^1( H; V)$.

\begin{lemma}\label{lem-lift-iff-loops-to-loops}
    Let $ U_1\subset H_1$ and $ \tilde{U}_1 \subset  \pi_1^{-1}(  U_1 )\subset G_1$ be domains. A smooth contact map $ f\colon U_1\to H_2$ admits a smooth contact lift $ F\colon \tilde{U}_1\to G_2$ if and only if 
    \begin{equation}\label{eq-loops-to-loops}
        (  f \circ  \pi_1 )\left(  \Gamma_{\mathrm{LIP}}(g, \tilde{U}_1) \right) \subset \Gamma_{\mathrm{LIP}}^{ \rho_2}( f( \pi_1(g)), H_2)
    \end{equation}
    for some $g \in  \tilde{U}_1$.
\end{lemma}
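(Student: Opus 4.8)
The statement is an equivalence, so there are two directions to establish, and the natural bridge between "contact lift exists" and "closed horizontal curves go to closed horizontal curves" is the uniqueness of horizontal lifts of curves in a central extension that preserves horizontal rank. First I would reduce to the rank-preserving case: by \cref{lemma-lift-image-abelian-component} and \cref{lemma-lift-domain-abelian-component}, the abelian (horizontal) factors $W_1 \subset V_1$ and $W_2 \subset V_2$ can be split off on both sides without affecting the existence of a smooth contact lift, and one checks that the condition \eqref{eq-loops-to-loops} is likewise insensitive to these factors — on the codomain because $W_2$ imposes no constraint on closedness of lifts (the integral $\int_\gamma \alpha_2$ only sees the non-abelian part), and on the domain because quotienting by $W_1$ does not change which loops in $\tilde U_1$ are Lipschitz loops through a given basepoint, up to the obvious correspondence. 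So I may assume $\operatorname{rank}(G_i) = \operatorname{rank}(H_i)$, which is exactly the hypothesis needed for \cref{lemma-explift-of-horizontal-curve} and for horizontal lifts of curves to be unique.

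For the forward direction, suppose $F \colon \tilde U_1 \to G_2$ is a smooth contact lift of $f$. Fix any $g \in \tilde U_1$ and let $\gamma \in \Gamma_{\mathrm{LIP}}(g, \tilde U_1)$. Then $\pi_1 \circ \gamma$ is a closed Lipschitz loop in $U_1$ based at $\pi_1(g)$, and $F \circ \gamma$ is a Lipschitz horizontal curve in $G_2$ (contact maps are locally Lipschitz and send horizontal curves to horizontal curves) which projects under $\pi_2$ to $f \circ \pi_1 \circ \gamma$; moreover $F\circ\gamma$ is closed because $\gamma$ is. Hence $F \circ \gamma$ is a closed horizontal lift of $f \circ \pi_1 \circ \gamma$, which by the characterization of $\Gamma^{\rho_2}_{\mathrm{LIP}}$ via $\int_{(\cdot)} \alpha_2$ means precisely that $f \circ \pi_1 \circ \gamma \in \Gamma^{\rho_2}_{\mathrm{LIP}}(f(\pi_1(g)), H_2)$. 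This gives \eqref{eq-loops-to-loops} for that $g$.

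For the converse — the substantive direction — assume \eqref{eq-loops-to-loops} holds for some basepoint $g_0 \in \tilde U_1$. I would build $F$ by horizontal path lifting: fix $F(g_0) \in \pi_2^{-1}(f(\pi_1(g_0)))$ arbitrarily, and for any $g \in \tilde U_1$ choose a horizontal path $\sigma$ in $\tilde U_1$ from $g_0$ to $g$ (possible since $\tilde U_1$ is a domain in the Carnot group $G_1$, hence horizontally path-connected), push it forward to the horizontal path $f \circ \pi_1 \circ \sigma$ in $H_2$, lift that uniquely to a horizontal path in $G_2$ starting at $F(g_0)$ (uniqueness of horizontal lifts uses rank preservation), and declare $F(g)$ to be its endpoint. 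The main obstacle, and the heart of the argument, is well-definedness: one must show the endpoint does not depend on the choice of $\sigma$. Given two such paths, their concatenation (run backwards) is a closed Lipschitz horizontal loop at $g_0$, i.e.\ an element of $\Gamma_{\mathrm{LIP}}(g_0, \tilde U_1)$ after reparametrization; hypothesis \eqref{eq-loops-to-loops} says its image under $f \circ \pi_1$ lies in $\Gamma^{\rho_2}_{\mathrm{LIP}}$, i.e.\ lifts to a closed loop in $G_2$, which forces the two endpoints of the two lifted paths to agree. (Here one also uses that $\pi_1$ sends horizontal curves to horizontal curves and that the contact map $f$ does too, so $f\circ\pi_1\circ\sigma$ is genuinely horizontal and Lipschitz.) Once $F$ is well-defined it is automatically a lift of $f$ (since $\pi_2 \circ F = f \circ \pi_1$ holds endpoint-by-endpoint along each lifted path) and automatically contact (its horizontal lift construction makes $D F$ carry horizontal vectors to horizontal vectors). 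The remaining point is smoothness of $F$: this is a local question, and near any point one can choose the connecting paths $\sigma$ to depend smoothly on the endpoint — e.g.\ prepend a smoothly varying horizontal path from $g$ to a nearby point — so that $F$ is locally expressed as a smooth function of $g$ via the smooth dependence of solutions of the lifting ODE on parameters, using that $f$ is smooth. I would expect the cleanest packaging of this last step to reuse the potential $\alpha_2$ from \cref{lemma-explift-of-horizontal-curve}: writing $G_2 \simeq \tilde G_2 \times W_2$ and reducing to the rank-preserving situation, the "missing $V_2$-component" of $F(g)$ is literally $\int_{f\circ\pi_1\circ\sigma} \alpha_2$ plus the analogous $H_2$-data, which depends smoothly on $g$ because $f \circ \pi_1$ and the integration are smooth in the endpoint; the $\tilde G_2$-component of $F$ is forced by the lift condition $\pi_2\circ F = f\circ\pi_1$ together with a smooth choice of section, hence also smooth.
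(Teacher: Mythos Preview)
Your approach matches the paper's: the forward direction is identical, and the converse constructs $F$ by horizontal path lifting from a fixed basepoint, with well-definedness supplied by \eqref{eq-loops-to-loops}. Two remarks are worth making.

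First, the domain-side reduction via \cref{lemma-lift-domain-abelian-component} is unnecessary and does not quite apply as stated---that lemma is formulated for the full cylinder $\pi_1^{-1}(U_1)$, not an arbitrary $\tilde U_1$, and in any case the only rank hypothesis you actually use (unique horizontal lifts into $G_2$, and \cref{lemma-explift-of-horizontal-curve} for the potential $\alpha_2$) concerns the \emph{codomain} extension. The paper accordingly reduces only on the codomain via \cref{lemma-lift-image-abelian-component}.

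Second, your smoothness sketch is the right idea, but both versions you offer bottom out in ``choose $\sigma$ smoothly in the endpoint,'' which is exactly the point needing justification. The paper makes this precise through control theory: it works on the space of $L^2$ controls $u$, defines $F$ implicitly via the endpoint maps $\operatorname{End}_g$ on $G_1$ and $\operatorname{End}_p$ on $G_2$ (with $p\in\pi_2^{-1}(f(\pi_1(g)))$ fixed), shows that \eqref{eq-loops-to-loops} forces $\operatorname{End}_p\circ(f\circ\pi_1)_*$ to factor through $\operatorname{End}_g$, then observes by a perturbation argument that at every point of $\tilde U_1$ some control has full-rank $d\operatorname{End}_g$, and applies the implicit function theorem to obtain a smooth local right inverse $\Psi$ of $\operatorname{End}_g$. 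Locally $F = \operatorname{End}_p \circ (f\circ\pi_1)_* \circ \Psi$ is then a composition of smooth maps. This is the rigorous packaging of your ``smoothly varying horizontal path to nearby points.''
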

\begin{proof}
If $ F \colon  \tilde{U}_1 \rightarrow  G_2$ is a smooth contact lift of $ f \colon  U_1 \rightarrow  H_2$ and $ \gamma \in  \Gamma_{\mathrm{LIP}}(g, \tilde{U}_1)$ is a closed horizontal curve based at an arbitrary $g\in  \tilde{U}_1$, then $ F \circ  \gamma$ is a closed horizontal curve in $ G_2$ based at $ F(g)$ whose projection to $ H_2$ is the closed horizontal curve $ \pi_2\circ F \circ  \gamma =  f \circ  \pi_1 \circ  \gamma$ based at $ f(  \pi_1(g) )$. Thus \eqref{eq-loops-to-loops} follows.

For the converse direction, we first observe that by \cref{lemma-lift-image-abelian-component}, it suffices to consider the case $\operatorname{rank}( G_2)=\operatorname{rank}( H_2)$.
Suppose that \eqref{eq-loops-to-loops} holds for some $g \in  \tilde{U}_1$. We will construct the lift $ F$ by a standard path lifting argument seen, for example, in \cite[Theorem~5.3]{Capogna-Tang-1995-quasiconformal_mappings_heisenberg}. We formulate the argument using control theory. We refer the reader to \cite[Section 8]{Agrachev-Barilari-Boscain-2020-a-comprehensive-introduction} for background on the endpoint map.

Fix a basepoint $p \in  \pi_2^{-1}(  f( \pi_1(g)) )$.
We construct a smooth contact lift $ F \colon  \tilde{U}_1 \rightarrow  G_2$ of $ f$ with $ F(g) = p$. Recall that for every control $u \in L^{2}( [0,1],   \mathfrak{g}_1^{[1]})$, there exists a corresponding trajectory $ \gamma_u \colon [0,1] \rightarrow  G_1$ obtained as the unique solution of the Cauchy problem
\begin{align*}
    \dot{ \gamma}_u(t) = ( L_{ \gamma_{u}(t)} )_{*}u(t),
    \quad
     \gamma_u(0) = g.
\end{align*}
This defines the endpoint map 
\begin{equation*}
    \operatorname{End}_g\colon L^{2}( [0,1]; \mathfrak{g}_1^{[1]})\to G_1,\quad \operatorname{End}_g(u) =  \gamma_u(1),
\end{equation*}
which by \cite[Proposition~8.5]{Agrachev-Barilari-Boscain-2020-a-comprehensive-introduction} is smooth.
We will also consider the two analogously defined endpoint maps 
\begin{align*}
    \operatorname{End}_{ f( \pi_1(g))}&\colon L^2([0,1]; \mathfrak{h}_2^{[1]})\to H_2,\\
    \operatorname{End}_{p}&\colon L^2([0,1]; \mathfrak{g}_2^{[1]})\to G_2.
\end{align*}
We have $  \mathfrak{h}_2^{[1]}= \mathfrak{g}_2^{[1]}$, so these latter two endpoint maps have the same domain. We note that $\operatorname{End}_{ f( \pi_1(g))} =  \pi_2 \circ \operatorname{End}_{p}$ by construction. 

Let $W\subset L^2([0,1]; \mathfrak{g}_{1}^{[1]})$ be the open subset of controls whose trajectories satisfy $ \gamma_u([0,1])\subset \tilde{U}_1$. Since $ \tilde{U}_1$ is a domain, $\operatorname{End}_p(W) =  \tilde{U}_1$. The smooth contact map $ f\circ \pi_1\colon \tilde{U}_1\to H_2$ induces a smooth map $( f\circ \pi_1)_*\colon W\to L^2([0,1]; \mathfrak{h}_{2}^{[1]})$ given by
\begin{equation*}
    (( f\circ \pi_1)_*u)(t) = (L_{ f\circ \pi_1\circ \gamma_u(t)}^{-1})_*\frac{d}{dt}( f\circ \pi_1\circ \gamma_u(t)).
\end{equation*}
We define $\Phi \coloneqq \operatorname{End}_{p} \circ ( f\circ \pi_1)_* \colon W \rightarrow  G_2$. Assumption \eqref{eq-loops-to-loops} implies that the value $\Phi(u)$ depends only on $\operatorname{End}_{g}(u)$ for $u \in W$. Thus there exists a uniquely defined map $ F \colon  \tilde{U}_1 \rightarrow  G_2$ for which $ F \circ \left. \operatorname{End}_g \right\vert_{W} = \Phi$.

By a perturbation argument, we see that, for any $h\in \tilde{U}_1$, there exists a control $u \in W \cap \operatorname{End}_{g}^{-1}( h )$ such that the differential $(d\operatorname{End}_{g})_u$ has full rank. So, by the implicit function theorem, every $h\in \tilde{U}_1$ has an open neighborhood $W\subset \tilde{U}_1$ such that a smooth right inverse $\Psi\colon V\to W$ of $\operatorname{End}_g$ exists. Therefore $\left. F\right\vert_{W} = \Phi \circ \Psi$ is smooth. It follows that $ F$ is smooth. Moreover, $ \pi_2 \circ  F =  f \circ  \pi_1$. Indeed, if $ \gamma \colon [0,1] \to  \tilde{U}_1$ is a horizontal curve, then $ F \circ  \gamma$ is a horizontal lift of $ f \circ  \pi_1 \circ  \gamma$ by the construction, the equality $\operatorname{End}_{ f( \pi_1(g))} =  \pi_2 \circ \operatorname{End}_{p}$, and the assumption \eqref{eq-loops-to-loops}, and thus the lifting property follows. It also follows that $ F$ is contact. That is, $ F$ is a smooth contact lift of $ f$.
\end{proof}

\subsection{Horizontal exactness}
In this subsection, we reformulate the path lifting result, \Cref{lem-lift-iff-loops-to-loops}, using horizontal exactness. This leads to the following definition.
\begin{definition}
    Let $ U$ be an open set in a Carnot group and $ V$ a vector space. A $1$-form $ \omega \in \Omega^{1}(  U;  V )$ is \emph{horizontally exact} if there exists a smooth function $x\colon  U\to V$ such that the horizontal components of $dx$ and $\omega$ are identical.
\end{definition}

\begin{proposition}\label{prop-lift-iff-potential-pullback-lift-exact}
    Let $ U_1 \subset  H_1$ and $ \tilde{U}_1\subset G_1$ be domains with $ \pi_1( \tilde{U}_1)= U_1$. Let $ \alpha_2\in\Omega^1( H_2; V_2)$ be a potential of $ \rho_2$. A smooth contact map $ f\colon H_1\supset U_1\to H_2$ admits a smooth contact lift $ F\colon \tilde{U}_1\to G_2$ if and only if $( f\circ \pi_1)^* \alpha_2$ is horizontally exact in $ \tilde{U}_1$.
\end{proposition}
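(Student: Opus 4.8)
The plan is to combine the path-lifting criterion of \Cref{lem-lift-iff-loops-to-loops} with the description of the loop class $ \Gamma^{ \rho_2}_{\mathrm{LIP}}$ by integration of a potential, and to recognise what remains as a horizontal Poincar\'e lemma. Write $ \omega \coloneqq ( f\circ \pi_1)^{*} \alpha_2 \in \Omega^{1}( \tilde{U}_1;  V_2)$; since $ f\circ \pi_1$ is smooth, the change of variables $\int_{ f\circ \pi_1\circ \gamma} \alpha_2 = \int_{ \gamma} \omega$ holds for every Lipschitz curve $ \gamma$ in $ \tilde{U}_1$. By the description $ \Gamma^{ \rho_2}_{\mathrm{LIP}}(h, H_2) = \{ \beta \in  \Gamma_{\mathrm{LIP}}(h, H_2) : \int_{ \beta} \alpha_2 = 0\}$ recalled before \Cref{lem-lift-iff-loops-to-loops}, the inclusion \eqref{eq-loops-to-loops} with basepoint $g$ is equivalent to $\int_{ \gamma} \omega = 0$ for all $ \gamma\in \Gamma_{\mathrm{LIP}}(g, \tilde{U}_1)$. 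Hence \Cref{lem-lift-iff-loops-to-loops} reduces the proposition to the claim that $ \omega$ is horizontally exact in $ \tilde{U}_1$ if and only if $\int_{ \gamma} \omega = 0$ for all $ \gamma \in  \Gamma_{\mathrm{LIP}}(g, \tilde{U}_1)$ for some $g\in \tilde{U}_1$. Two potentials of $ \rho_2$ differ by a closed, hence exact, $1$-form on the contractible group $ H_2$, so both this condition and horizontal exactness of $ \omega$ are independent of the chosen $ \alpha_2$, which makes the reduction consistent.

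The implication from horizontal exactness is immediate: if $x\colon \tilde{U}_1\to V_2$ is smooth and the horizontal parts of $dx$ and $ \omega$ agree, then for a closed horizontal Lipschitz curve $ \gamma$ in $ \tilde{U}_1$ we get $\int_{ \gamma} \omega=\int_{ \gamma}dx=x( \gamma(1))-x( \gamma(0))=0$, using that $\dot{ \gamma}$ is horizontal almost everywhere; thus \eqref{eq-loops-to-loops} holds at every basepoint and \Cref{lem-lift-iff-loops-to-loops} supplies the lift. Here $ f\circ \pi_1\circ \gamma$ is a closed horizontal Lipschitz curve in $ H_2$ because $ \pi_1$ is a $1$-Lipschitz graded homomorphism and $ f$ is smooth, hence contact and locally Lipschitz. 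It remains to prove the converse.

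So suppose $ f$ admits a smooth contact lift; then, by the reduction above, $\int_{ \gamma} \omega = 0$ for all $ \gamma \in  \Gamma_{\mathrm{LIP}}(g, \tilde{U}_1)$ for a fixed $g$. Since $ \tilde{U}_1$ is open and connected, every $h\in \tilde{U}_1$ is joined to $g$ by a horizontal Lipschitz curve inside $ \tilde{U}_1$, and concatenating one such curve with the reverse of another gives a loop at $g$; therefore
\begin{equation*}
    x(h)\coloneqq \int_{ \sigma} \omega,\qquad \sigma\text{ a horizontal Lipschitz curve in } \tilde{U}_1\text{ from }g\text{ to }h,
\end{equation*}
is well defined. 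By construction $x( \gamma(1))-x( \gamma(0))=\int_{ \gamma} \omega$ for every horizontal Lipschitz curve $ \gamma$ in $ \tilde{U}_1$; applying this to the horizontal curves $t\mapsto h\exp(tX)$ with $X$ in the horizontal layer $ \mathfrak{g}_1^{[1]}$ of $ G_1$ shows that, once $x$ is known to be smooth, the left-invariant horizontal derivatives of $x$ equal $ \omega$ evaluated on the corresponding left-invariant horizontal fields, i.e.\ the horizontal parts of $dx$ and $ \omega$ coincide. The remaining and principal point is the smoothness of $x$. This follows by repeating the endpoint-map argument from the proof of \Cref{lem-lift-iff-loops-to-loops}: with $\operatorname{End}_g$ the smooth endpoint map and $W$ the open set of controls whose trajectories stay in $ \tilde{U}_1$ (so that $\operatorname{End}_g(W)= \tilde{U}_1$), the assignment $u\mapsto\int_{ \gamma_u} \omega$ is a smooth function on $W$ which, by the vanishing of the loop integrals, depends only on $\operatorname{End}_g(u)$ and hence factors as $x\circ\operatorname{End}_g|_W$; since $\operatorname{End}_g$ admits smooth local right inverses (the same perturbation plus implicit function theorem argument as in \Cref{lem-lift-iff-loops-to-loops}), $x$ is smooth. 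Thus $ \omega$ is horizontally exact, completing the proof; the only real obstacle is this smoothness argument, everything else being bookkeeping with \Cref{lem-lift-iff-loops-to-loops} and the change-of-variables formula.
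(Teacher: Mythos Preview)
Your argument is correct, and it takes a genuinely different route from the paper for the direction ``lift $\Rightarrow$ horizontal exactness''. The paper first reduces to the case $\operatorname{rank}( G_2)=\operatorname{rank}( H_2)$ via \Cref{lemma-lift-image-abelian-component}, then writes down the primitive explicitly as $y = x\circ F$, where $x$ is the $ V_2$-component of exponential coordinates on $ G_2$; the identity $dy =  F^*\theta_2 + ( f\circ \pi_1)^*\alpha_2$ from \eqref{eq-cocycle-potential-implicit-exp-coord-definition} and the contact property of $ F$ finish the job in one line. You instead prove a standalone horizontal Poincar\'e lemma: vanishing of all horizontal loop integrals of $\omega$ implies horizontal exactness, by defining the primitive through path integration and establishing its smoothness via the endpoint-map factoring from \Cref{lem-lift-iff-loops-to-loops}. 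Your route is more symmetric (both implications pass through the loop criterion) and does not need the rank reduction, but it re-invokes the control-theoretic machinery; the paper's route is shorter for this direction because the primitive is handed to you by the lift itself.
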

\begin{proof}
We consider a direct product $ G_2 \simeq \tilde{ G}_2 \times   V_2^{[1]}$ for $\mathrm{rank}( \tilde{ G}_2 ) = \mathrm{rank}(  H_2 )$. Note that the $2$-cocycle $\rho_2$ has a potential $ \alpha_2$ taking values in $( V_2^{[1]} )^{\perp} =  V_2^{[2]} \oplus \dots \oplus  V_2^{[s]}$ because $\rho_2$ has no $V_2^{[1]}$-components. Since two potentials differ by an exact term, it suffices to consider this specific potential. Using this observation, the claim reduces to the case $\mathrm{rank}(  G_2 ) = \mathrm{rank}(  H_2 )$ by \Cref{lemma-lift-image-abelian-component}. After this reduction, we may also consider the potential used in \cref{lemma-explift-of-horizontal-curve}.

Suppose $(  f \circ  \pi_1 )^{*} \alpha_2$ is horizontally exact in $ \tilde{U}_1$. That is, suppose that there exists a smooth $y \colon  \tilde{U}_1 \rightarrow  V_2$ such that the horizontal components of $dy$ coincide with the horizontal components of $(  f \circ  \pi_1 )^{*} \alpha_2$. Whenever $ \gamma \colon [0,1] \rightarrow  \tilde{U}_1$ is a closed horizontal curve, we deduce that
\begin{align*}
    0 = \int_{ \gamma} dy = \int_{  \gamma } (  f \circ  \pi_1 )^{*} \alpha_2 = \int_{  f \circ  \pi_1 \circ  \gamma }  \alpha_2.
\end{align*}
By \cref{lem-lift-iff-loops-to-loops}, $ f$ admits a smooth contact lift.

For the converse direction, suppose that $ F \colon  \tilde{U}_1 \rightarrow  G_2$ is a smooth contact lift of $f$.

Let $x\colon G_2\to  V_2$ be the $ V_2$-component of exponential coordinates on $ G_2$ and let $y=x\circ F$. By \eqref{eq-cocycle-potential-implicit-exp-coord-definition} and the lift condition, we have
\begin{equation}\label{eq-vertical-component-derivative}
    dy =  F^*dx 
    =  F^* \theta_2 +  F^* \pi_2^* \alpha_2
    =  F^* \theta_2 + ( f\circ \pi_1)^* \alpha_2,
\end{equation}
where $ \theta_2\in\Omega^1( G_2; V_2)$ is the left-invariant extension of the projection $ \mathfrak{g}_2\to V_2$; here the projection is understood as a $ V_2$-valued $1$-form.
Since $ F$ is contact and $\mathrm{rank}(  G_2 ) = \mathrm{rank}(  H_2 )$, $ F^* \theta_2$ has no horizontal component. Hence \eqref{eq-vertical-component-derivative} implies that $( f\circ \pi_1)^* \alpha_2$ is horizontally exact. The equivalence follows.
\end{proof}

\section{Structure of contact lifts}\label{sec:uniqueness-etc}
This section has three subsections. The first subsection concerns the uniqueness of lifts up to left-translations. The second subsection shows that the lifting problem in simply connected domains is local. The third subsection shows that a lift is a homomorphism in the fiber direction of the projection.

In this section, we consider central extensions of Carnot groups $ V_1\to G_1\to H_1$ and $ V_2\to G_2\to H_2$ by 2-cocycles $ \rho_1$ and $ \rho_2$, respectively.

\subsection{Uniqueness of lifts}
When the codomain central extension preserves horizontal rank, smooth contact lifts are unique up to a left-translation by an element of $ V_2$.
\begin{lemma}\label{lemma-uniqueness-of-lifts}
    Suppose $\operatorname{rank}( H_2)=\operatorname{rank}( G_2)$ and let $ U_1\subset  H_1$ be a domain.
    If $ F_1, F_2\colon G_1\supset \pi_1^{-1}( U_1)\to  G_2$ are smooth contact lifts of a smooth contact map $ f\colon H_1\supset U_1\to H_2$, then there exists $k\in V_2$ such that $ F = k F_2$.
\end{lemma}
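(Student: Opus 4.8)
The plan is to observe that two lifts of $f$ differ fiberwise by a $V_2$-valued function, and then to show this function is constant using horizontal connectedness of $\pi_1^{-1}(U_1)$ together with uniqueness of horizontal lifts in a rank-preserving central extension. (Note that the displayed conclusion should read $F_1 = kF_2$.)

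First, since $F_1$ and $F_2$ are lifts of $f$ we have $\pi_2\circ F_1 = f\circ\pi_1 = \pi_2\circ F_2$, so for every $g\in\pi_1^{-1}(U_1)$ the elements $F_1(g)$ and $F_2(g)$ lie in the same fiber of $\pi_2$; hence $k(g)\coloneqq F_1(g)F_2(g)^{-1}\in\ker\pi_2 = V_2$, and $g\mapsto k(g)$ is smooth because the group operations are. As $V_2$ is central, $F_1 = kF_2 = F_2 k$, so it remains to show $k$ is constant. For this I would fix a basepoint $g_0\in\pi_1^{-1}(U_1)$, set $k_0\coloneqq k(g_0)$, and note that $k_0F_2$ is again a smooth contact lift of $f$: it is smooth, left translation by the central element $k_0$ is an isometry of $G_2$ and hence contact, and $\pi_2\circ(k_0F_2) = \pi_2\circ F_2 = f\circ\pi_1$ since $k_0\in\ker\pi_2$. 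By construction $k_0F_2$ agrees with $F_1$ at $g_0$.

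Next I would use that $U_1$ is a domain and $\pi_1$ is a fiber bundle with connected ($\cong\mathbb{R}^{\dim V_1}$) fibers, so $\pi_1^{-1}(U_1)$ is a connected open subset of $G_1$, hence horizontally path-connected: every $g\in\pi_1^{-1}(U_1)$ can be joined to $g_0$ by a horizontal curve $\gamma\colon[0,1]\to\pi_1^{-1}(U_1)$. Then $F_1\circ\gamma$ and $(k_0F_2)\circ\gamma$ are horizontal curves in $G_2$ with the same projection $f\circ\pi_1\circ\gamma$ to $H_2$ and the same initial point $F_1(g_0)=(k_0F_2)(g_0)$. Since $\operatorname{rank}(G_2)=\operatorname{rank}(H_2)$, a horizontal curve in $G_2$ is the unique horizontal lift of its $\pi_2$-projection with prescribed initial point — this is exactly the uniqueness statement already invoked in the proof of \Cref{lemma-lift-domain-abelian-component} — so $F_1\circ\gamma = (k_0F_2)\circ\gamma$, and evaluating at $t=1$ gives $F_1(g)=k_0F_2(g)$. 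As $g$ is arbitrary, $F_1 = k_0F_2$, which is the claim with $k=k_0$.

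The one point that needs care, and the only mildly technical step, is the uniqueness of horizontal lifts used above. I would argue it as follows: the condition $\operatorname{rank}(G_2)=\operatorname{rank}(H_2)$, together with $\pi_{2*}$ being graded, forces $V_2^{[1]}=0$, i.e.\ $V_2\cap\mathfrak{g}_2^{[1]}=\{0\}$. If $\sigma_1,\sigma_2$ are horizontal curves in $G_2$ with $\pi_2\circ\sigma_1=\pi_2\circ\sigma_2$ and $\sigma_1(0)=\sigma_2(0)$, then $a\coloneqq\sigma_1\sigma_2^{-1}$ is a curve in the central abelian subgroup $V_2$; by the product rule for the left-trivialized (logarithmic) derivative, in which the adjoint term vanishes because $V_2$ is central, the left-trivialized velocity of $a$ equals the difference of the left-trivialized velocities of $\sigma_1$ and $\sigma_2$, both of which lie in $\mathfrak{g}_2^{[1]}$; hence this velocity lies in $V_2\cap\mathfrak{g}_2^{[1]}=\{0\}$, so $a$ is constant, and $a(0)=e$ gives $\sigma_1=\sigma_2$. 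Everything else in the argument is formal.
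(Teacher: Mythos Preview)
Your proof is correct and follows essentially the same approach as the paper: both use that the equal-rank condition forces the (left-trivialized) horizontal derivatives of $F_1$ and $F_2$ to agree, and then invoke horizontal path-connectedness of $\pi_1^{-1}(U_1)$ to conclude the lifts differ by a constant in $V_2$. The paper compresses your path-lifting argument into the single sentence ``the horizontal derivatives of the smooth maps $F_1$ and $F_2$ agree everywhere,'' whereas you unpack this via uniqueness of horizontal lifts of curves and verify that uniqueness explicitly; this is the same content, just presented with more detail.
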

\begin{proof}
    Since $\operatorname{rank}( H_2)=\operatorname{rank}( G_2)$, the lift condition $ \pi_2\circ F_1= f\circ \pi_1 =  \pi_2\circ F_2$ implies that the horizontal derivatives of the smooth maps $ F_1$ and $ F_2$ agree everywhere. Since the set $ U_1$ is open and connected, it is horizontally path connected, so it follows that $ F_1= k F_2$ for some constant element $k\in G_2$. The lift condition further implies that $k\in\ker \pi_2 =  V_2$.
\end{proof}

\subsection{Lifts in simply connected domains}
The lifting problem on simply connected domains is local in the sense formulated in the subsequent lemmas.

\begin{lemma}\label{lemma-vertical-invariance-of-horizontal-exactness}
    Let $ U_1 \subset  H_1$ and $ \tilde{U}_1\subset G_1$ be simply connected domains with $ \pi_1( \tilde{U}_1)= U_1$. Then a smooth contact map $ f\colon H_1\supset U_1\to H_2$ admits a smooth contact lift $ F\colon \tilde{U}_1\to G_2$ if and only if it admits a smooth contact lift $ F\colon \pi_1^{-1}( U_1)\to G_2$.
\end{lemma}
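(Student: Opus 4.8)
The plan is to reduce everything to the horizontal exactness criterion of \Cref{prop-lift-iff-potential-pullback-lift-exact} and then show that horizontal exactness of the relevant $1$-form on $\tilde U_1$ propagates to all of $\pi_1^{-1}( U_1)$ by a patching argument exploiting simple connectivity. One implication is immediate: since $\pi_1(\tilde U_1)= U_1$ we have $\tilde U_1\subset \pi_1^{-1}( U_1)$, so restricting a smooth contact lift $ F\colon \pi_1^{-1}( U_1)\to G_2$ to $\tilde U_1$ gives a smooth contact lift of $ f$ on $\tilde U_1$.

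For the converse, fix a potential $ \alpha_2\in\Omega^1( H_2; V_2)$ of $ \rho_2$ and set $ \omega\coloneqq( f\circ \pi_1)^* \alpha_2=\pi_1^*( f^* \alpha_2)$. By \Cref{prop-lift-iff-potential-pullback-lift-exact} applied to the pair $( U_1,\tilde U_1)$, the assumption yields a smooth $y\colon\tilde U_1\to V_2$ whose horizontal component equals that of $ \omega$. The key point is that $ \omega$ is invariant under the right translations $R_v$ by elements $v$ of the central subgroup $\ker \pi_1\cong V_1$: indeed $\pi_1\circ R_v=\pi_1$, and since $\ker \pi_1$ is central these $R_v$ coincide with left translations, hence preserve both the horizontal distribution and its orthogonal complement. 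Therefore $\{\tilde U_1\cdot v: v\in V_1\}$ is an open cover of $\pi_1^{-1}( U_1)$ by diffeomorphic copies of $\tilde U_1$, and on each piece $\tilde U_1\cdot v$ the function $y_v\coloneqq y\circ R_v^{-1}$ is smooth with the horizontal component of $dy_v$ equal to that of $ \omega$. On a connected component of an overlap $\tilde U_1\cdot v\cap\tilde U_1\cdot v'$ the difference $y_v-y_{v'}$ has vanishing horizontal differential, hence is constant, because every connected open subset of a Carnot group is horizontally path connected. Now $\pi_1^{-1}( U_1)$ is simply connected: in exponential coordinates adapted to the splitting $ \mathfrak{g}_1= \mathfrak{h}_1\oplus V_1$ the map $\pi_1$ becomes the projection $ \mathfrak{h}_1\times V_1\to \mathfrak{h}_1$, so $\pi_1^{-1}( U_1)$ is diffeomorphic to the product of a diffeomorphic copy of $ U_1$ with the vector space $V_1$. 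A standard monodromy argument then lets me modify the $y_v$ by constants so that they patch to a single smooth $\hat y\colon\pi_1^{-1}( U_1)\to V_2$ whose horizontal differential agrees with that of $ \omega$; equivalently, the gluing obstruction is the class of the transition constants in $H^1(\pi_1^{-1}( U_1); V_2)=0$. Thus $ \omega$ is horizontally exact on $\pi_1^{-1}( U_1)$, and \Cref{prop-lift-iff-potential-pullback-lift-exact} applied to the pair $( U_1,\pi_1^{-1}( U_1))$ — admissible since $\pi_1^{-1}( U_1)$ is a domain surjecting onto $ U_1$ — produces the desired lift on $\pi_1^{-1}( U_1)$.

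The step I expect to require the most care is the patching: one must verify that the local primitives $y_v$ differ by honest constants (this is exactly where horizontal path connectedness of connected open sets enters), that $\pi_1^{-1}( U_1)$ is genuinely simply connected (clearest from the product description in adapted exponential coordinates), and that simple connectivity kills the gluing obstruction. The two invocations of \Cref{prop-lift-iff-potential-pullback-lift-exact} and the $R_v$-invariance of $ \omega$ are routine.
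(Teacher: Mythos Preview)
Your argument is correct. Both your proof and the paper's reduce via \Cref{prop-lift-iff-potential-pullback-lift-exact} to showing that horizontal exactness of $\omega=(f\circ\pi_1)^*\alpha_2$ on $\tilde U_1$ implies horizontal exactness on $\pi_1^{-1}(U_1)$, and both use the $V_1$-invariance of $\omega$ together with simple connectivity of $\pi_1^{-1}(U_1)$. The difference is in how that passage is executed: the paper observes that on a simply connected domain horizontal exactness is equivalent to being Rumin closed, and Rumin closedness is a local condition that propagates from $\tilde U_1$ to the full cylinder by $V_1$-invariance; this is a two-line argument once the Rumin machinery is in place. You instead construct explicit local primitives $y_v$ on the translates $\tilde U_1\cdot v$, verify that the transition data are locally constant, and kill the \v{C}ech $1$-cocycle using $H^1(\pi_1^{-1}(U_1);V_2)=0$. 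Your route is more elementary in that it never mentions the Rumin complex, at the cost of spelling out the patching; the paper's route is shorter but leans on the Rumin cohomology isomorphism (\Cref{prop:cohomology}). One small point worth making explicit in your write-up: the map $\check H^1(\mathcal U;\underline{V_2})\to H^1(\pi_1^{-1}(U_1);\underline{V_2})$ is injective for any open cover $\mathcal U$, which is what justifies the ``standard monodromy'' step without requiring the translates to form a good cover.
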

\begin{proof}
    By \cref{prop-lift-iff-potential-pullback-lift-exact}, it suffices to check that $( f\circ \pi_1)^* \alpha_2$ is horizontally exact in $ \tilde{U}_1$ if and only if it is horizontally exact in the full cylinder $ \pi_1^{-1}( U_1)$. Since both domains are simply connected, horizontal exactness is equivalent to being Rumin closed. The claim follows since the form $( f\circ \pi_1)^* \alpha_2$ is invariant under translations by $ V_1$ and being Rumin closed is a local property.
\end{proof}

\begin{lemma}\label{lemma-local-lift-to-global-lift}
    Let $U_1\subset  H_1$ be a simply connected domain and let $ f\colon H_1\supset U_1\to H_2$ be a smooth contact map.
    Suppose that, for every $g\in  U_1$, there exists an open neighborhood $ U_g\subset U_1$ for which the restriction $\left. f\right|_{ U_g}\colon U_g\to  H_2$ admits a smooth contact lift $ F_g\colon \pi_1^{-1}( U_g)\to  G_2$. Then the unrestricted map $ f\colon U_1\to H_2$ admits a smooth contact lift $ F\colon \pi_1^{-1}( U_1)\to  G_2$.
\end{lemma}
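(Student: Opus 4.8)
The plan is to reduce, via \cref{prop-lift-iff-potential-pullback-lift-exact}, the whole statement to a local-to-global principle for horizontal exactness of a single vector-valued $1$-form, and then to run that principle over the cylinder $\pi_1^{-1}(U_1)$. Fix a potential $\alpha_2\in\Omega^1(H_2;V_2)$ of $\rho_2$ and set $\omega=(f\circ\pi_1)^*\alpha_2\in\Omega^1(\pi_1^{-1}(U_1);V_2)$. After shrinking each $U_g$ we may assume it is a ball, hence a simply connected domain, while $f|_{U_g}$ still admits the lift $F_g$ over $\pi_1^{-1}(U_g)$. By \cref{prop-lift-iff-potential-pullback-lift-exact} applied to $f|_{U_g}$ with $\tilde U_1=\pi_1^{-1}(U_g)$, the form $\omega$ is horizontally exact on $\pi_1^{-1}(U_g)$, hence Rumin closed there, i.e.\ $d_c\omega=0$ on $\pi_1^{-1}(U_g)$ (for this implication simple connectedness is not even needed, since the Rumin complex is a complex by \cref{prop:cohomology}; one could also invoke the equivalence used in the proof of \cref{lemma-vertical-invariance-of-horizontal-exactness}).

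Being Rumin closed is a local condition and the sets $\pi_1^{-1}(U_g)$, $g\in U_1$, form an open cover of $\pi_1^{-1}(U_1)$, so $d_c\omega=0$ on all of $\pi_1^{-1}(U_1)$. Next I would note that $\pi_1^{-1}(U_1)$ is itself a simply connected domain: in the exponential coordinates of $G_1$ and $H_1$ the homomorphism $\pi_1$ becomes the linear projection $\mathfrak{h}_1\oplus V_1\to\mathfrak{h}_1$, so $\pi_1^{-1}(U_1)$ is diffeomorphic to $\log_{H_1}(U_1)\times V_1$, the product of the simply connected domain $\log_{H_1}(U_1)$ with the vector space $V_1$. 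Applying the equivalence ``Rumin closed $\Leftrightarrow$ horizontally exact on simply connected domains'' from the proof of \cref{lemma-vertical-invariance-of-horizontal-exactness} (whose nontrivial direction rests on the identification of the degree-$1$ cohomology of the Rumin complex with de Rham cohomology in \cref{prop:cohomology}, which vanishes here), we conclude that $\omega$ is horizontally exact on $\pi_1^{-1}(U_1)$. A final application of \cref{prop-lift-iff-potential-pullback-lift-exact}, now with $\tilde U_1=\pi_1^{-1}(U_1)$, then yields the sought smooth contact lift $F\colon\pi_1^{-1}(U_1)\to G_2$ of $f$.

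The only step requiring care is the passage from the local hypothesis to the global conclusion. It succeeds precisely because horizontal exactness on a simply connected domain is detected by the vanishing of the pointwise quantity $d_c\omega$ together with the cohomological vanishing supplied by \cref{prop:cohomology}; without simple connectedness of $U_1$ the local lifts need not patch, the obstruction being a cohomology class in $H^1(U_1;V_2)$. The remaining ingredients---that $\omega$ is horizontally exact locally by \cref{prop-lift-iff-potential-pullback-lift-exact}, and that $\pi_1^{-1}$ of a simply connected domain is again a simply connected domain---are routine.
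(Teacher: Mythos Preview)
Your argument is correct and follows essentially the same route as the paper's proof: translate the lifting problem to horizontal exactness of $(f\circ\pi_1)^*\alpha_2$ via \cref{prop-lift-iff-potential-pullback-lift-exact}, pass from local horizontal exactness to global Rumin closedness, and then use simple connectedness of $\pi_1^{-1}(U_1)$ together with \cref{prop:cohomology} to upgrade back to global horizontal exactness. The only cosmetic differences are that the paper first reduces to $\operatorname{rank}(G_2)=\operatorname{rank}(H_2)$ via \cref{lemma-lift-image-abelian-component} (which is not actually needed once one invokes \cref{prop-lift-iff-potential-pullback-lift-exact} directly), and that you spell out why $\pi_1^{-1}(U_1)$ is simply connected, which the paper simply asserts.
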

\begin{proof}
    By \cref{lemma-lift-image-abelian-component}, it suffices to consider the case when $\operatorname{rank}( H_2)=\operatorname{rank}( G_2)$.
    Let $h =  f \circ  \pi_1|_{  \pi_1^{-1}( U_1) }$ and let $ \alpha_2$ be a potential for $ \rho_2$. By our assumption, for every $g \in  U_1$, the form $h^{*} \alpha_2$ is horizontally exact in the domain $ \pi_1^{-1}(  U_{g} )$ by \cref{prop-lift-iff-potential-pullback-lift-exact}. In particular, the horizontal projection $\pi_{E_0}h^{*} \alpha_2$ is Rumin exact in $ \pi_1^{-1}(  U_g )$, and thus Rumin closed in $ \pi_1^{-1}(  U_1 )$ by a covering argument. Since the Rumin cohomology is isomorphic to the de Rham one and $ \pi_1^{-1}(  U_1 )$ is simply connected, it follows that $\pi_{E_0}h^{*} \alpha_2$ is Rumin exact. Equivalently, $h^{*} \alpha_2$ is horizontally exact. Reapplying \cref{prop-lift-iff-potential-pullback-lift-exact} proves that $ f$ admits a lift into $ \pi_1^{-1}( U_1)$. 
\end{proof}

\begin{remark}
    \cref{lemma-local-lift-to-global-lift} can also be proved using more classical topological arguments in the vein of {\v{C}}ech cohomology without invoking \cref{prop-lift-iff-potential-pullback-lift-exact}. In the subsequent argument, we lose no generality in assuming that $\{ U_{g} \}_{ g \in U_1 }$ is a good cover of $U_1$, i.e. the finite intersections of $\{ U_g \}_{ g \in U_1 }$ are contractible. It suffices for us that $U_{g} \cap U_{g'}$ is connected.
    
    \cref{lemma-lift-image-abelian-component} allows us to reduce to the case $\operatorname{rank}( H_2)=\operatorname{rank}( G_2)$. By the uniqueness of lifts given in \cref{lemma-uniqueness-of-lifts}, if $ U_g\cap U_{g'}\neq\emptyset$ for some $g,g'\in U_1$, then there exists an element $\phi(g,g')\in V_2$ such that $ F_g=\phi(g,g') F_{g'}$ on the intersection $ \pi_1^{-1}( U_g\cap U_{g'})$.
    Then, for any threefold intersection $ U_g\cap U_{g'}\cap U_{g''}\neq\emptyset$, we obtain a 1-cocycle type compatibility condition
    \begin{equation}\label{eq-threefold-intersection-transition-compatibility}
        \phi(g,g'') = \phi(g,g') + \phi(g',g'').
    \end{equation}

    Using \eqref{eq-threefold-intersection-transition-compatibility} as definition of $\phi(g,g'')$ when $\phi(g,g')$ and $\phi(g',g'')$ are already defined, we extend $\phi$ to a map $\phi\colon U_1\times U_1\to  V_2$ 
    using finite chains of open sets $ U_{g_i}$. Since $ U_1$ is simply connected, this extension is well defined, and by construction satisfies \eqref{eq-threefold-intersection-transition-compatibility} for all $g,g',g''\in U_1$. Then, by fixing a basepoint $p\in  U_1$, we may define a map
    \begin{equation*}
         F\colon \pi_1^{-1}( U_1)\to G_2,\quad
         F(x) = \phi(p, \pi_1(x)) F_{ \pi_1(x)}(x).
    \end{equation*}
    Condition \eqref{eq-threefold-intersection-transition-compatibility} implies that $ F$ is a left-translation of $ F_g$ by an element in $ V_2$ in each $ U_g$ and hence a smooth contact lift of $ f$.
\end{remark}

\subsection{The fiber component of a lift}
    When the horizontal ranks on the codomain side coincide, a smooth contact lift is a homomorphism in the fiber direction of the projection.
\begin{lemma}\label{lemma-lipschitz-lift-is-a-homomorphism-on-fibers}
    Let $ V_1\to G_1\to H_1$ and $ V_2\to G_2\to H_2$ be central extensions of Carnot groups such that $\operatorname{rank}( G_2)=\operatorname{rank}( H_2)$.
    Let $ U_1\subset H_1$ be a domain and $ F\colon \pi_1^{-1}( U_1)\to G_2$ a smooth contact lift of a smooth contact map $ f\colon U_1\to H_2$.
    Then there exists a Lie group homomorphism $ \Phi\colon  V_1\to V_2$ such that $ F(gk)= F(g) \Phi(k)$ for all $g\in \pi_1^{-1}( U_1)$ and $k\in V_1$.
\end{lemma}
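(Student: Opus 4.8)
The plan is to leverage the uniqueness of contact lifts up to left-translation, \cref{lemma-uniqueness-of-lifts}, together with the fact that $V_1 = \ker(\pi_1)$ is central in $G_1$ and $V_2 = \ker(\pi_2)$ is central in $G_2$. Fix $k \in V_1$. Since $k$ is central, the right translation $R_k \colon g \mapsto gk$ coincides with the left translation $L_k$, so it is a smooth contact diffeomorphism of $G_1$; moreover $\pi_1(gk) = \pi_1(g)$, so $R_k$ maps the cylinder $\pi_1^{-1}(U_1)$ onto itself. Hence $F \circ R_k$ is again a smooth contact map on $\pi_1^{-1}(U_1)$, and $\pi_2(F(gk)) = f(\pi_1(gk)) = f(\pi_1(g))$ shows that it is another smooth contact lift of the \emph{same} map $f$. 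Because $\operatorname{rank}(G_2) = \operatorname{rank}(H_2)$, \cref{lemma-uniqueness-of-lifts} applies to $F_1 = F \circ R_k$ and $F_2 = F$ and yields a (unique) element, which I will call $\Phi(k) \in V_2$, with $F(gk) = \Phi(k) F(g)$ for all $g \in \pi_1^{-1}(U_1)$; since $\Phi(k)$ is central in $G_2$, this is equivalent to $F(gk) = F(g)\Phi(k)$.

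It then remains to check that $\Phi \colon V_1 \to V_2$ is a Lie group homomorphism. For the algebraic part, I would apply the displayed identity twice: for $k_1, k_2 \in V_1$ and any $g \in \pi_1^{-1}(U_1)$ one gets $F(g)\Phi(k_1 k_2) = F(g(k_1k_2)) = F((gk_1)k_2) = F(gk_1)\Phi(k_2) = F(g)\Phi(k_1)\Phi(k_2)$, and cancelling $F(g)$ gives $\Phi(k_1 k_2) = \Phi(k_1)\Phi(k_2)$ (note $V_1$, being central, is abelian, so this is consistent with commutativity of $V_2$). For smoothness, fix any $g_0 \in \pi_1^{-1}(U_1)$; since $g_0 k \in \pi_1^{-1}(U_1)$ for every $k \in V_1$, the identity $\Phi(k) = F(g_0 k)\, F(g_0)^{-1}$ holds on all of $V_1$, exhibiting $\Phi$ as a smooth map (a composition of $F$, group multiplication and inversion). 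A smooth group homomorphism between Lie groups is automatically a Lie group homomorphism, which finishes the argument.

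I do not anticipate any serious obstacle; the proof is essentially bookkeeping built on \cref{lemma-uniqueness-of-lifts}. The one point requiring minor care is matching the two translation conventions: \cref{lemma-uniqueness-of-lifts} produces a lift that is a \emph{left} translation of $F$, whereas the statement is phrased via a \emph{right} translation by $\Phi(k)$, and reconciling the two uses precisely that $\Phi(k) \in V_2$ is central in $G_2$. The remaining ingredients — that $L_k = R_k$ for central $k$, that left translations of a Carnot group are smooth contact diffeomorphisms, and that $R_k$ preserves the cylinder $\pi_1^{-1}(U_1)$ — are all standard.
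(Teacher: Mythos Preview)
Your proof is correct. It differs from the paper's argument in one structural respect: to show that the element $\tilde\Phi(g,k)\in V_2$ is independent of $g$, you invoke the uniqueness result \cref{lemma-uniqueness-of-lifts} directly (applied to the two lifts $F$ and $F\circ R_k$), whereas the paper argues from scratch by showing that $g\mapsto\tilde\Phi(g,k)$ is locally Lipschitz into $V_2$ via a triangle-inequality estimate, and then uses that $V_2$ contains no horizontal directions to conclude it is locally constant. Your route is shorter because \cref{lemma-uniqueness-of-lifts} has already packaged exactly this rigidity; the paper's direct argument is more self-contained and makes the metric mechanism (Lipschitz map into a purely vertical subgroup must be constant) explicit. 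The verification that $\Phi$ is a homomorphism and is smooth is identical in both approaches.
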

\begin{proof}
    By the lift assumption, we have
    \begin{equation*}
         \pi_2\circ  F(gk) =  f\circ \pi_1(gk) =  f\circ \pi_1(g) =  \pi_2\circ F(g).
    \end{equation*}
    Thus, for each $g\in  \pi_1^{-1}(  U_1 )$ and $k\in V_1$, there exists some $\tilde{ \Phi}(g,k)\in V_2$ such that $ F(gk)= F(g)\tilde{ \Phi}(g,k)$. We claim that $\tilde{ \Phi}(g,k)$ is constant in $g\in \pi_1^{-1}( U_1)$. That is, there is a well defined map $ \Phi\colon  V_1\to V_2$ such that $\tilde{ \Phi}(g,k) =  \Phi(k)$ for $g\in  \pi_1^{-1}(  U_1 )$ and $k\in  V_1$. By assumption, $ U_1$ is connected, so it suffices to prove that $g\mapsto \tilde{ \Phi}(g,k)$ is locally constant for fixed $k\in V_1$. 

    Let $B'$ be a ball that is compactly contained in $\pi_{1}^{-1}( U_1 )$. Let $B$ be a ball with the same center and half the radius, and let $L$ be an upper bound for the operator norm of the horizontal differential of $F$ on $B'$. Let $B''$ be a left-translation of $B'$ by $k$. Since $f \circ \pi_1$ is invariant under the action of $V_1$, it holds that $L$ is an upper bound for the operator norm of the horizontal differential of $f \circ \pi_1$ on $B''$. Since $F$ is a contact lift of $f$ and $\operatorname{rank}( G_2 ) = \operatorname{rank}( H_2 )$, it follows that the operator norm of the horizontal differential of $F$ is also bounded by $L$ on $B''$. Thus $F$ is $L$-Lipschitz on the left-translation of $B$ by $k$. We tacitly use this in the subsequent argument.
    
    Let $g,h\in B$.
    Since the extensions are central, elements of $ V_1$ commute with elements of $ G_1$, and similarly for $V_2$. By left-invariance of the distance, we compute that
    \begin{align*}
        d(\tilde{ \Phi}(g,k),\tilde{ \Phi}(h,k)) &= d( F(g)\tilde{ \Phi}(g,k), F(g)\tilde{ \Phi}(h,k))
        \\&\leq d( F(g)\tilde{ \Phi}(g,k), F(h)\tilde{ \Phi}(h,k))
        \\&+ d( F(h)\tilde{ \Phi}(h,k), F(g)\tilde{ \Phi}(h,k))
        \\&= d( F(gk), F(hk))
        + d( F(h), F(g))
        \\&\leq Ld(gk,hk) + Ld(g,h)
        \\&= 2Ld(g,h).
    \end{align*}
    That is, we have shown that $g\mapsto \tilde{ \Phi}(g,k)$ is a Lipschitz map $B\to V_2\subset  G_2$. By the equal rank assumption, $ V_2$ does not contain any horizontal directions. Hence the only possible Lipschitz maps $B\to V_2$ are constants. Consequently, $g\mapsto\tilde{ \Phi}(g,k)$ is locally constant for $k \in V_1$.
    
    Next, we show that $ \Phi\colon  V_1\to V_2$ is a group homomorphism.
    Let $k_1,k_2\in  V_1$ and let $g\in  \pi_1^{-1}( U_1)$. Applying the definition of $\tilde{ \Phi}$ with the pair $(g,k_1k_2)$ shows
    \begin{equation*}
         F(gk_1k_2) =  F(g) \Phi(k_1k_2),
    \end{equation*}
    whereas applying the definition with the pairs $(gk_1,k_2)$ and $(g,k_1)$ yields
    \begin{equation*}
         F(gk_1k_2) =  F(gk_1) \Phi(k_2)
        =  F(g) \Phi(k_1) \Phi(k_2).
    \end{equation*}
    Canceling out $ F(g)$ implies that $ \Phi$ is a group homomorphism.

    Finally, we observe that $ \Phi(k) =  F(g)^{-1} F(gk)$ for any $g\in G_1$ and $k\in V_1$, so the smoothness of $ \Phi$ follows.
\end{proof}

\section{Lifts and the Rumin complex}\label{sec:rumin}
In this section, we prove \cref{thm-lift-iff-potential-pullback-rumin-differential}. Since the Rumin differential does not commute with the pullback, we need some preliminary results about the Rumin differential of a 1-form in the context of central extensions.
We recall that for a left-invariant $2$-cocycle $ \rho$, we have that $ \rho \in \mathrm{im}(d_0)^{\perp}$ implies $ \rho \in E_0 \cap E$, and $ \rho \in E_0$ implies $ \rho \in \mathrm{im}(d_0)^{\perp}$.

\begin{lemma}\label{lemm:piE:projection}
    Let $ V\overset{\iota}{\to} G\overset{ \pi}{\to} H$ be a central extension of Carnot groups given by a 2-cocycle $ \rho\in\Omega^2( H; V)$, and let $ \theta\in\Omega^1( G; V)$ be the left-invariant 1-form extending the orthogonal projection $ \mathfrak{g}= \mathfrak{h}\oplus V\to V$. Let $v_1, \dots, v_m$ be a basis of $ V$ and decompose $ \theta = \sum_{j=1}^mv_j \theta^j$ and $ \rho = \sum_{j=1}^mv_j \rho^j$. Suppose that $ \rho^1,\ldots, \rho^m$ are all in $E_0(H)$ and are pairwise orthogonal and that $| \rho^j| \in \{0,1\}$ for $1 \leq j \leq m$. Let $ U \subset  H$ be an open set. Then every $1$-form $ \omega\in E_{0}^1( U)$ satisfies
    \begin{equation*}
        \pi_{E}  \pi^{*} \omega
        =
         \pi^{*} \pi_{E} \omega
        +
        \sum_{ j = 1 }^{ m } 
        \left( \langle d_c \omega,  \rho^j \rangle \circ \pi\right)\pi_{E_0^{\perp}} \theta^j,
    \end{equation*}
    where $\pi_{E_0^{\perp}}$ is the orthogonal projection to $E_0^{\perp}(  \pi^{-1}(  U ) )$.
\end{lemma}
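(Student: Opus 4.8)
The plan is to compute $\pi_E \pi^* \omega$ directly from the defining formula \eqref{eq-pi-E-formula}, $\pi_E = I - P d_0^{-1} d - d P d_0^{-1}$, and to track how the pullback $\pi^*$ fails to commute with the building-block operators $d$, $d_0^{-1}$, and $P$. Since $\pi$ is a graded homomorphism, $\pi^*$ commutes with the \emph{de Rham} differential $d$; the failure to commute with $\pi_E$ therefore comes entirely from the weight-lowering part, i.e.\ from the relation between $d$ and $d_0$ on the extension. The key local computation is the following: for $\omega \in E_0^1(U)$, the pulled-back form $\pi^*\omega$ is again left-invariant-in-the-fiber-direction, $\pi^*\omega \in E_0^1(\pi^{-1}(U))$ is false in general, but $d(\pi^*\omega) = \pi^*(d\omega)$ and one can split $d\omega = d_0\omega + (d - d_0)\omega$ on $H$ and then push the identity $d\theta^j = -\pi^*\rho^j$ (from the proof of \Cref{lemma-explift-of-horizontal-curve}, equation \eqref{eq-cocycle-potential-implicit-exp-coord-definition}, together with $d\theta = -\pi^*\rho$) through the formula. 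The term $\sum_j (\langle d_c\omega, \rho^j\rangle \circ \pi)\,\pi_{E_0^\perp}\theta^j$ is exactly the discrepancy produced this way.

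**Key steps, in order.** First, I would record the basic facts: $\pi^* d = d \pi^*$ (graded homomorphism), $\pi^* d_0 = d_0 \pi^*$ on left-invariant forms (this is \Cref{sec:left-invariant-d0-cocycle}, since $\pi_*$ is a Lie algebra homomorphism), and the structural identity $d\theta^j = -\pi^*\rho^j$, so that for a left-invariant form on $H$ pulled back to $G$ the operator $D = d_0^{-1}(d - d_0)$ acquires an extra contribution only through the interaction with the $\theta^j$. Second, since $\rho^j \in E_0(H)$ with $|\rho^j| \in \{0,1\}$, I would use the recalled facts $\rho^j \in E_0 \cap E$ and $\rho^j \in \operatorname{im}(d_0)^\perp$ to simplify: $d_0^{-1}\rho^j = 0$ and $\pi_{E_0}\rho^j = \rho^j$. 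Third, I would apply \eqref{eq-pi-E-formula} to $\pi^*\omega$. The term $d_0^{-1} d \pi^* \omega = d_0^{-1}\pi^* d\omega$; decomposing $d\omega$ into its weight-preserving part $d_0\omega$ and the higher-weight parts, and separately analyzing $\pi^*(d_0\omega)$ versus the genuine pullback, the fiber components generate exactly the $\theta^j$-terms. Here $d_c\omega = \pi_{E_0}\pi_E d\omega$ enters because $\langle d\omega, \rho^j\rangle = \langle \pi_{E_0}\pi_E d\omega, \rho^j\rangle = \langle d_c\omega, \rho^j\rangle$ by the orthogonality of the $\rho^j$ in $E_0$ and the fact that $\pi_E, \pi_{E_0}$ are self-adjoint-like with respect to the relevant pairings (this uses that $\rho^j \in E_0 \cap E$). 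Fourth, I would collect the remaining terms and recognize them as $\pi^* \pi_E \omega = \pi^*(\omega - P d_0^{-1} d\omega - d P d_0^{-1}\omega)$, using $d_0^{-1}\omega = 0$ since $\omega \in E_0^1 \subset \ker d_0^{-1}$, so $\pi_E\omega = \omega - P d_0^{-1} d\omega$ and the whole right-hand side reorganizes into the claimed formula.

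**Main obstacle.** I expect the delicate point to be the bookkeeping of \emph{which} weight components survive and precisely identifying the coefficient $\langle d_c\omega, \rho^j\rangle$. The operator $P = \sum_k (-1)^k D^k$ is an infinite-looking sum, and after pullback $D$ acquires $\theta^j$-dependent pieces, so one must argue that only the \emph{first-order} term in this expansion contributes to the discrepancy — higher powers $D^k$ for $k \geq 2$ applied to the $\theta^j$-correction either vanish by the weight/degree constraints ($|\rho^j|\in\{0,1\}$ keeps things in low weight) or get killed by $\pi_{E_0^\perp}$ being applied only to the leftover $\theta^j$. Establishing that the correction is \emph{linear} in $d_c\omega$ and lands exactly in $\pi_{E_0^\perp}\theta^j$ (rather than a more complicated expression involving $\pi_E\theta^j$ or $P\theta^j$) is the crux; I would handle it by carefully using $d\theta^j = -\pi^*\rho^j$ with $\rho^j$ already Rumin-closed ($d_c\rho^j$ makes sense; in fact $\rho^j\in E_0\cap E$ means $d\rho^j = 0$ since $\rho^j$ is a closed left-invariant form), which collapses the nested applications of $D$. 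A secondary technical check is that all identities, proved first for left-invariant forms on $\bigwedge^*\mathfrak{g}^*$, extend $\mathcal C^\infty(U)$-linearly to $\Omega^*(U)$ as recalled in \Cref{sec-rumin-complex} — this is routine but must be stated.
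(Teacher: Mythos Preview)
Your approach is essentially the paper's: compute $\pi_E\pi^*\omega$ from the formula $\pi_E = I - Pd_0^{-1}d$ (the term $dPd_0^{-1}$ vanishes on 1-forms) and track how $\pi^*$ fails to commute with $d_0^{-1}$ and $P$. A few points need correction, though.

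First, a minor slip: $\pi^*\omega$ \emph{does} lie in $E_0^1(\pi^{-1}(U))$, since for 1-forms $E_0^1$ is exactly the horizontal forms and $\pi_*$ is graded. This is harmless for the argument but worth noting.

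Second, and more substantively, your Step~3 identification of the coefficient as $\langle d\omega,\rho^j\rangle$, together with the claim $\langle d\omega,\rho^j\rangle = \langle \pi_{E_0}\pi_E d\omega,\rho^j\rangle$, does not hold. The projection $\pi_E$ is not orthogonal, so $\rho^j\in E_0\cap E$ does not give $\langle d\omega,\rho^j\rangle = \langle \pi_E d\omega,\rho^j\rangle$. In the paper's computation the coefficient that actually emerges is $\langle d\pi_E\omega,\rho^j\rangle$, and it arises from a \emph{cancellation}: applying the $P$-formula to $\pi^*(d_0^{-1}d\omega)$ produces a term $\langle dPd_0^{-1}d\omega,\rho^j\rangle$, while applying the $d_0^{-1}$-formula directly to $\pi^*d\omega$ produces $-\langle d\omega,\rho^j\rangle$; their sum is $\langle d(Pd_0^{-1}d - I)\omega,\rho^j\rangle = -\langle d\pi_E\omega,\rho^j\rangle$. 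Only then does $\rho^j\in E_0$ (and $\omega\in E_0^1$) convert this to $\langle d_c\omega,\rho^j\rangle$.

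This also means your ``main obstacle'' paragraph mislocates the difficulty. It is \emph{not} the case that only the first-order term in the $P$-expansion contributes: every power $D^k\pi^*\omega$ picks up a fresh $\widetilde\theta^j$-correction with coefficient $c_j^k = \langle dD^{k-1}\omega,\rho^j\rangle$, and these accumulate to $\langle dP\omega,\rho^j\rangle$. The computational core is therefore (i)~an explicit formula for $d_0^{-1}$ on 2-forms in $G$ in terms of $d_0^{-1}$ on $H$ plus $\widetilde\theta^j$-terms (this is the structural input you allude to but do not state), followed by (ii)~an induction showing $D^k\pi^*\omega = \pi^*D^k\omega - \sum_j(c_j^k\circ\pi)\widetilde\theta^j$. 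The fact that $D$ annihilates an existing $\widetilde\theta^j$-correction is indeed used, but its role is to prevent higher-order cross terms from appearing in the induction, not to truncate the sum.
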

\begin{proof}
    Denote $ \tilde{U}\coloneqq  \pi^{-1}( U)$. By construction of the central extension, the left-invariant 1-forms of $ G$ are linear combinations of the pullbacks of left-invariant 1-forms in $ H$ and the 1-forms $ \theta^j$. As $ \pi^{*}  \rho = -d_0 \theta$, the image of $d_0\colon\Omega^1( \tilde{U})\to\Omega^2( \tilde{U})$ is the $\mathcal{C}^{\infty}( \tilde{U})$-module spanned by $ \pi^*d_0\Omega^1( U)$ and the forms $ \pi^* \rho^1,\ldots, \pi^* \rho^m$. 
    Recall that the inner product in $ \mathfrak{g}$ is such that $ \pi_{*} \colon  \mathfrak{g} \rightarrow  \mathfrak{h}$ is a submersion. It follows that for any $ \omega\in\Omega^1( \tilde{U})$, the 2-form $ \omega \wedge  \theta^j$ is in $ \operatorname{im}(d_0)^{\perp}$.

    Any 2-form $\kappa \in \Omega^{2}(  \tilde{U} )$ has a representation
    \begin{align*}
        \kappa
        =
        c  \pi^{*}\gamma
        +
        \sum_{i = 1}^{m} c_i  \pi^{*}\gamma_i \wedge  \theta^i
        +
        \sum_{ i < j } c_{i,j}  \theta^{i} \wedge  \theta^{j},
    \end{align*}
    for some $\gamma \in\Omega^2( U)$, $\gamma_i\in\Omega^1( U)$, and $c,c_i,c_{i,j}\in\mathcal{C}^{\infty}( \tilde{U})$. Since the last two sums are orthogonal to $\operatorname{im}(d_0)$, and by assumption the forms $ \rho^j$ are pairwise orthogonal, the orthogonal projection of $\kappa$ to $\operatorname{im}(d_0)$ satisfies
    \begin{equation}\label{eq-imd0-projection}
         \pi_{ \mathrm{im}(d_0) } \kappa
        =  \pi_{ \mathrm{im}(d_0) } (c \pi^*\gamma)
        =
        c  \pi^{*}(  \pi_{\mathrm{im}(d_0)}\gamma) 
        +
        \sum_{ j = 1 }^{ m }
            \langle c \pi^*\gamma,  \pi^{*} \rho^j \rangle
             \pi^{*} \rho^j.
    \end{equation}

    For 1-forms, we have $E_0^1 = \ker d_0$. Denote $\widetilde{ \theta}^{j} = \pi_{E_0^{\perp}} \theta^j$, for $1 \leq j \leq m$, so that we have $\widetilde{ \theta}^{j} = d_0^{-1}d_0 \theta^j = d_0^{-1}(- \pi^* \rho^j)$.
    From \eqref{eq-imd0-projection}, it follows that
    \begin{equation}\label{eq-d0-inverse}
        d_0^{-1} \kappa
        = d_0^{-1} (c \pi^*\gamma)
        =
        c  \pi^{*}( d_{0}^{-1}\gamma )
        -
        \sum_{ j = 1 }^{ m }
            c\left( \langle \gamma,  \rho^j \rangle \circ \pi\right)
            \widetilde{ \theta}^j.
    \end{equation}
    Let $ \omega\in \Omega^1( U)$ be a 1-form. To compute $\pi_E \pi^* \omega$ for the projection $\pi_E$ given by \eqref{eq-pi-E-formula}, we first compute the powers $D^k \omega$ for the operator $D=d_0^{-1}(d-d_0)$.
    Denote $c_j^0 = 0$ and 
    \begin{equation*}
        c_j^k = \langle dD^{k-1} \omega ,  \rho^j \rangle
    \end{equation*}
    for $1\leq j\leq m$ and $k\geq 1$.
    We claim that
    \begin{equation}\label{eq-D-power-formula}
        D^k \pi^* \omega =  \pi^*D^k \omega - \sum_{ j = 1 }^{ m } ( c_j^{k} \circ  \pi ) \widetilde{ \theta}^j
        \quad\text{for $k \geq 0$.}
    \end{equation}
    
    The claim clearly holds for $k = 0$, so suppose inductively that the claim has been verified for some $k \geq 0$. Since $ \pi^*$ commutes with $d$ and $d_0$, and since $(d-d_0)\widetilde{ \theta}^j=0$, we have
    \begin{equation*}
        ( d - d_0 ) D^k \pi^* \omega
        =
         \pi^{*} ( d - d_0 )D^k \omega
        -
        \sum_{ j = 1 }^{ m }  \pi^{*}( (d-d_0) c_j^{k} ) \wedge \widetilde{ \theta}^j.
    \end{equation*}
    Applying \eqref{eq-d0-inverse} for $\kappa = ( d - d_0 ) D^k \pi^* \omega$ gives
    \begin{equation*}
        D^{k+1} \pi^* \omega
        =
         \pi^{*} D^{k+1} \omega
        -
        \sum_{j = 1}^{m} \left(\langle ( d - d_0 )D^k \omega,  \rho^j \rangle\circ \pi\right)\widetilde{ \theta}^j.
    \end{equation*}
    Since $ \rho^j \in \operatorname{im}( d_0 )^{\perp}$, we have 
    \begin{equation*}
        \langle ( d - d_0 )D^k \omega,  \rho^j \rangle
        = \langle dD^k \omega,  \rho^j \rangle = c_j^{k+1}
    \end{equation*}
    and the claim \eqref{eq-D-power-formula} follows.

    Next, we consider the operator $P = \sum_{ k \geq 0 }( -1 )^{k} D^{k}$. Using \eqref{eq-D-power-formula}, we obtain
    \begin{equation}\label{eq-P-formula}
        P  \pi^{*} \omega
        =
         \pi^{*} P \omega
        +
        \sum_{ j = 1 }^{ m } 
        \left(\langle d P \omega,  \rho^j \rangle\circ \pi\right)\widetilde{ \theta}^j.
    \end{equation}

    Since $d_0^{-1}$ is the zero map for 1-forms, formula \eqref{eq-pi-E-formula} simplifies down to $\pi_E = I - P d_0^{-1} d$ for 1-forms. Applying \eqref{eq-d0-inverse} for $\kappa=d \pi^{*} \omega$ yields
    \begin{align}\label{eq-d-0-inverse-d}
        d_0^{-1} d  \pi^{*} \omega
        =
         \pi^{*} d_{0}^{-1} d \omega
        -
        \sum_{ j = 1 }^{ m }
        \left(\langle d \omega,  \rho^j \rangle\circ \pi\right)\widetilde{ \theta}^j.
    \end{align}
    We apply the operator $P$ to \eqref{eq-d-0-inverse-d} and then use \eqref{eq-P-formula} for the form $d_0^{-1}d \omega$. This implies that
    \begin{align*}
        Pd_0^{-1} d  \pi^{*} \omega
        =
         \pi^{*} P d_{0}^{-1}d \omega
        &+
        \sum_{ j = 1 }^{ m } 
        \left(\langle d P d_0^{-1} d \omega,  \rho^j \rangle\circ \pi\right)\widetilde{ \theta}^j
        \\&-        
        \sum_{ j = 1 }^{ m }
        P\left( \left(\langle d \omega,  \rho^j \rangle \circ \pi\right)\widetilde{ \theta}^j \right).
    \end{align*}
    In the earlier computation for the powers $D^k \pi^{*} \omega$, we already saw the fact that
    \begin{align*}
        D\left( \left(\langle d \omega,  \rho^j \rangle \circ \pi\right)\widetilde{ \theta}^j \right)
        =
        d_0^{-1}\left(  \pi^*( d - d_0 )\langle d \omega,  \rho^j \rangle  \wedge \widetilde{ \theta}^j\right)
        = 0,
    \end{align*}
    which implies that $P\left( \left( \langle d \omega,  \rho^j \rangle \circ \pi\right) \widetilde{ \theta}^j \right) = \left( \langle d \omega,  \rho^j \rangle \circ  \pi\right) \widetilde{ \theta}^j$. Therefore
    \begin{align*}
        Pd_0^{-1} d  \pi^{*} \omega
        &=
         \pi^{*} P d_{0}^{-1}d \omega
        +
        \sum_{ j = 1 }^{ m } 
        \left( \langle d( P d_{0}^{-1}d - I ) \omega,  \rho^j \rangle\circ \pi\right)\widetilde{ \theta}^j
        \\
        &=
        \pi^{*} P d_{0}^{-1}d \omega
        -
        \sum_{ j = 1 }^{ m } 
        \left( \langle d\pi_{E} \omega,  \rho^j \rangle\circ \pi\right)\widetilde{ \theta}^j.
    \end{align*}
    We conclude that
    \begin{equation*}
        \pi_{E}  \pi^{*} \omega
        =
         \pi^{*} \pi_{E} \omega
        +
        \sum_{ j = 1 }^{ m } 
        \left( \langle d\pi_{E} \omega,  \rho^j \rangle \circ \pi\right)\widetilde{ \theta}^j.
    \end{equation*}
    Recalling $ \rho^j\in E_0(H)$, we see that
    \begin{equation*}
        \langle d\pi_{E} \omega,  \rho^j \rangle = \langle \pi_{E_0}d\pi_{E} \omega,  \rho^j \rangle.
    \end{equation*}
    Since $d\pi_E=\pi_Ed$, by the definition \eqref{eq-rumin-differential} of the Rumin differential, we have $d_c \omega = \pi_{E_0}d\pi_{E} \omega$ whenever $ \omega \in E_{0}^{1}(  U )$, concluding the proof.
\end{proof}

In the following lemma, we apply \Cref{lemm:piE:projection} to horizontal exactness on simply connected domains.
\begin{lemma}\label{lemma-rumin-exactness}
Let $ V\overset{\iota}{\to} G\overset{ \pi}{\to} H$ be a central extension of Carnot groups by $\rho$, and let $v_1, \dots, v_m$ be a basis of $ V$ and decompose $ \rho = \sum_{j=1}^mv_j \rho^j$. Let $ \omega \in \Omega^{1}( U; W )$ be a $1$-form in a simply connected domain $ U \subset  H$ for a finite-dimensional vector space $W$. Then $ \pi^{*} \omega$ is horizontally exact in $ \pi^{-1}(  U )$ if and only if $d_c \omega = \sum_{j = 1 }^{ m } c_j \pi_{E_0} \rho^j$ in $ U$ for some constant vectors $c_j\in W$, $1\leq j\leq m$.
\end{lemma}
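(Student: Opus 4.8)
The plan is: first reduce, by passing to an isomorphic central extension, to the normalized situation handled by \cref{lemm:piE:projection}; then rephrase horizontal exactness on a simply connected domain as the vanishing of a Rumin differential; and finally compute that Rumin differential with \cref{lemm:piE:projection} and read off the algebraic condition. For the reduction, I would first note that both conditions in the statement are unchanged if $ V\to G\to H$ is replaced by an isomorphic central extension of Carnot groups in the sense of \cref{lemma-carnot-central-extension-isomorphism} and \cref{remark-carnot-central-extension-isomorphism-tilting}: such an isomorphism $ \psi$ is an isometric contact diffeomorphism with $\tilde{ \pi}\circ \psi= \pi$, so $ \psi^{*}\tilde{ \pi}^{*} \omega= \pi^{*} \omega$ and horizontal exactness of the pullback is preserved; and since $\pi_{E_0}$ annihilates $\operatorname{im}(d_0)$, the span $\operatorname{span}_{\RR}\{\pi_{E_0} \rho^{1},\dots,\pi_{E_0} \rho^{m}\}$, and hence the condition ``$d_c \omega$ is a $W$-combination of the $\pi_{E_0} \rho^{j}$'', depends only on the isomorphism class. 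Writing each left-invariant cocycle as $ \rho^{j}=\pi_{E_0} \rho^{j}+d_0 \omega^{j}$ with $ \omega^{j}$ left-invariant and invoking \cref{lemma-carnot-central-extension-isomorphism}, one may therefore assume the $ \rho^{j}$ are pairwise orthogonal, lie in $E_0^{2}( H)$, and have norm in $\{0,1\}$; then $\pi_{E_0} \rho^{j}= \rho^{j}$.

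Next, from horizontal exactness to $d_c$: in degree one $E_0^{1}$ is exactly the space of horizontal $1$-forms, so on $1$-forms $\pi_{E_0}$ is the horizontal projection, it commutes with $ \pi^{*}$, and $d_c x=\pi_{E_0}dx$ for a function $x$. Hence a $1$-form $ \eta$ is horizontally exact if and only if $\pi_{E_0} \eta$ lies in $\operatorname{im}(d_c|_{\Omega^{0}})$; since $ \pi^{-1}( U)$ is diffeomorphic to $ U\times V$ and therefore simply connected, and Rumin cohomology agrees with de Rham cohomology by \cref{prop:cohomology}, this is equivalent to $d_c \eta=0$. Applying this to $ \eta= \pi^{*} \omega$ and setting $ \beta\coloneqq\pi_{E_0} \omega\in E_0^{1}( U)$, one has $d_c \pi^{*} \omega=d_c(\pi^{*} \beta)$ and $d_c \omega=d_c \beta$, so it remains to prove that $d_c(\pi^{*} \beta)=0$ in $ \pi^{-1}( U)$ if and only if $d_c \beta=\sum_{j} c_{j} \rho^{j}$ in $ U$ for constant vectors $c_{j}\in W$.

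For the computation, since $ \pi^{*} \beta\in E_0^{1}$ and $d\pi_E=\pi_E d$ we get $d_c(\pi^{*} \beta)=\pi_{E_0}\,d(\pi_E \pi^{*} \beta)$, and \cref{lemm:piE:projection} gives $\pi_E \pi^{*} \beta= \pi^{*}\pi_E \beta+\sum_{j}(\langle d_c \beta, \rho^{j}\rangle\circ \pi)\,\pi_{E_0^{\perp}} \theta^{j}$. Differentiating and applying $\pi_{E_0}$, the contributions $(\langle d_c \beta, \rho^{j}\rangle\circ \pi)\,d \theta^{j}=-(\langle d_c \beta, \rho^{j}\rangle\circ \pi)\, \pi^{*} \rho^{j}$ vanish because $ \pi^{*} \rho^{j}=-d_0 \theta^{j}\in\operatorname{im}(d_0)$ and $\pi_{E_0}$ is $\mathcal{C}^{\infty}$-linear and kills $\operatorname{im}(d_0)$; and $\pi_{E_0} \pi^{*}d\pi_E \beta$ reduces to $ \pi^{*}$ of the component of $d_c \beta$ orthogonal to $\operatorname{span}\{ \rho^{j}\}$, using the block structure of $d_0$ on the extension (namely $\operatorname{im}(d_0)\subset \pi^{*}(\bigwedge^{2} \mathfrak{h}^{*})$, and $ \pi^{*}\gamma\in E_0( \pi^{-1}( U))$ precisely when $\gamma\in E_0( U)$ is orthogonal to $\operatorname{span}\{ \rho^{j}\}$). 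The outcome is a formula expressing $d_c(\pi^{*} \beta)$ as $ \pi^{*}$ of the $\operatorname{span}\{ \rho^{j}\}$-orthogonal part of $d_c \beta$ plus terms of the shape $\pi_{E_0}\big( \pi^{*}(d\langle d_c \beta, \rho^{j}\rangle)\wedge \theta^{j}\big)$. From this one reads off that $d_c(\pi^{*} \beta)=0$ forces both the $\operatorname{span}\{ \rho^{j}\}$-orthogonal part of $d_c \beta$ to vanish and each function $\langle d_c \beta, \rho^{j}\rangle$ to have vanishing horizontal differential, hence to be constant since $ U$ is connected, thus horizontally connected; and conversely these constraints make the formula vanish. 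Together with the previous step this yields $d_c \beta=\sum_{j} c_{j} \rho^{j}$ with $c_{j}=\langle d_c \beta, \rho^{j}\rangle$, proving the lemma.

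I expect the last computation to be the main obstacle: organizing the application of \cref{lemm:piE:projection}, and above all checking that the resulting expression for $d_c(\pi^{*} \beta)$ vanishes \emph{exactly} when $d_c \beta\in\operatorname{span}_{\RR}\{ \rho^{j}\}$ — i.e.\ that the pulled-back term and the $ \theta^{j}$-terms cannot cancel each other, and that $\pi_{E_0}\big( \pi^{*}(dh)\wedge \theta^{j}\big)=0$ forces the horizontal differential of $h$ to be zero. This requires a weight-by-weight analysis of how $E_0^{2}( \pi^{-1}( U))$ sits relative to pulled-back forms and the left-invariant fibre forms $ \theta^{j}$. (A more conceptual alternative would run through \cref{lemma-explift-of-horizontal-curve} and the path-lifting argument of \cref{lem-lift-iff-loops-to-loops}, reducing the claim to ``$\int_{c} \omega=0$ whenever $\int_{c}\alpha^{j}=0$ for all $j$'' over closed horizontal curves $c$, where $\alpha^{j}$ are potentials of $ \rho^{j}$ satisfying $d_c\alpha^{j}=\pi_{E_0} \rho^{j}$; but this route still needs a justification that vanishing on the common kernel of the period functionals $c\mapsto\int_{c}\alpha^{j}$ produces an actual linear combination.)
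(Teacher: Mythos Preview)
Your overall strategy matches the paper's exactly: reduce to normalized cocycles via \cref{lemma-carnot-central-extension-isomorphism} and \cref{remark-carnot-central-extension-isomorphism-tilting}, rephrase horizontal exactness on the simply connected cylinder as vanishing of a Rumin differential, and compute with \cref{lemm:piE:projection}. The one place where the paper does something cleaner is your ``main obstacle''. Instead of applying $\pi_{E_0}$ to the expression coming from \cref{lemm:piE:projection} and then trying to understand $E_0^2(\pi^{-1}(U))$ relative to pulled-back forms and the $\theta^j$, the paper uses that $\pi_{E_0}\colon E\to E_0$ is a bijection (\cref{prop:cohomology}), so $d_c(\pi^{*}\beta)=0$ is equivalent to $d\,\pi_{E}\pi^{*}\beta=0$. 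Differentiating the formula from \cref{lemm:piE:projection} directly gives
\[
d\,\pi_{E}\pi^{*}\beta
=\pi^{*}\Big(d\,\pi_{E}\beta-\textstyle\sum_j c_j\,\widehat{\rho}^{\,j}\Big)
+\sum_j \pi^{*}(dc_j)\wedge\pi_{E_0^{\perp}}\widehat{\theta}^{\,j},
\qquad c_j=\langle d_c\beta,\widehat{\rho}^{\,j}\rangle,
\]
as an identity in $\Omega^{2}$ rather than in $E_0^{2}$. Here the two summands live in the orthogonal pieces $\pi^{*}\Lambda^{2}\mathfrak{h}^{*}$ and $\pi^{*}\mathfrak{h}^{*}\wedge V^{*}$, so non-cancellation and the implication ``$dc_j=0$'' are immediate from linear independence of the $\pi_{E_0^{\perp}}\widehat{\theta}^{\,j}$. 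One then applies $\pi_{E_0}$ on $H$ to pass from $d\,\pi_{E}\beta=\sum_j c_j\widehat{\rho}^{\,j}$ to $d_c\beta=\sum_j c_j\widehat{\rho}^{\,j}$ (and back, since $\widehat{\rho}^{\,j}\in E_0\cap E$). This bypasses entirely the weight-by-weight analysis you anticipate, and in particular avoids having to identify $\pi_{E_0}\big(\pi^{*}(dh)\wedge\theta^{j}\big)$. The paper also reduces to $W=\RR$ at the outset, which is harmless but keeps the bookkeeping lighter.
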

\begin{proof}
Consider a basis $w_1, \dots, w_l$ for $W$ and denote $ \omega = \sum_{ j = 1 }^{ l } w_j  \omega^{j}$. Then $ \omega$ is horizontally exact if and only if each $ \omega^{j}$ is horizontally exact for $1 \leq j \leq l$. Hence the general claim follows from the real-valued case. So we assume $W = \mathbb{R}$ from this point onwards.

Consider orthonormal and left-invariant $\widehat{ \rho}^1, \dots, \widehat{ \rho}^k$ spanning the same subspace as $\{ \pi_{E_0} \rho^1, \dots, \pi_{E_0} \rho^{m} \}$. In case $k < m$, let $\widehat{ \rho}^{j} = 0$ for $k+1 \leq j \leq m$. We denote $\widehat{ \rho} = \sum_{j = 1}^{m} v_j \widehat{ \rho}^j$. Consider the central extension of Lie groups $ V\to\widehat{ G}\overset{\widehat{ \pi}}{\to} H$ by $\widehat{ \rho}$ and the Lie group isomorphism from $ G$ to $\widehat{ G}$ obtained from \Cref{lemma-carnot-central-extension-isomorphism}. We equip $\widehat{ G}$ with a Carnot structure making the Lie group isomorphism an isomorphism of Carnot groups, cf. \Cref{remark-carnot-central-extension-isomorphism-tilting}. It follows that $ \pi^{*} \omega$ is horizontally exact if and only if $\widehat{ \pi}^{*} \omega$ is horizontally exact.

By definition, horizontal exactness means exactness for the Rumin differential. Since $\widehat{ \pi}^{-1}( U)$ is simply connected, it holds that $\widehat{ \pi}^* \omega$ is horizontally exact if and only if $d_c\widehat{ \pi}^* \omega=0$. Since $\pi_{E_0}\widehat{ \pi}^* \omega=\widehat{ \pi}^*\pi_{E_0} \omega$ as $\pi_{E_0}$ is the horizontal projection for $1$-forms, it follows that $d_c\widehat{ \pi}^{*} \omega = 0$ is equivalent to $d\pi_{E}\widehat{ \pi}^*\pi_{E_0} \omega = 0$.

Consider the functions
\begin{equation*}
    c_{j} \coloneqq \langle d_c \omega, \widehat{ \rho}^j \rangle,\quad 1\leq j\leq m.
\end{equation*}
Using \cref{lemm:piE:projection}, we find that
\begin{equation}\label{eq:derivedidentity}
    d\pi_E\widehat{ \pi}^*\pi_{E_0} \omega
    = \widehat{ \pi}^*\left( d\pi_E\pi_{E_0} \omega - \sum_{j=1}^{m} c_j\widehat{ \rho}^j\right) 
    + \sum_{j=1}^{m}\widehat{ \pi}^*dc_j\wedge\pi_{E_0^{\perp}}\widehat{ \theta}^j.
\end{equation}
The collection $\pi_{E_0^{\perp}}\widehat{ \theta}^1, \dots, \pi_{E_0^{\perp}}\widehat{ \theta}^k$ is linearly independent by the linear independence of $\widehat{ \rho}^1, \dots, \widehat{ \rho}^k$. Combining this with the equality \eqref{eq:derivedidentity}, we deduce that $d\pi_E\widehat{ \pi}^*\pi_{E_0} \omega = 0$ if and only if
\begin{equation}\label{eq:derivedidentity:onecomponent}
    d\pi_E\pi_{E_0} \omega = \sum_{j=1}^{m}c_j\widehat{ \rho}^j
    \quad\text{for constants $\{ c_j \}_{ j = 1 }^{ m }$.}
\end{equation}
Applying $\pi_{E_0}$ to \eqref{eq:derivedidentity:onecomponent} leads to the equivalent equality
\begin{align}\label{eq:derivedidentity:onecomponent:rumin}
    d_c  \omega = \sum_{j = 1}^{m} c_j \widehat{ \rho}^j
    \quad\text{for constants $\{ c_j \}_{ j = 1 }^{ m }$.}
\end{align}
Hence $ \pi^{*} \omega$ is horizontally exact if and only if \eqref{eq:derivedidentity:onecomponent:rumin} holds. Since the $\mathbb{R}$-linear spans of $\{ \widehat{ \rho}^1, \dots, \widehat{ \rho}^{m} \}$ and $\{ \pi_{E_0} \rho^{1}, \dots, \pi_{E_0} \rho^{m} \}$ are equal, the claim follows.
\end{proof}

\begin{proof}[Proof of \cref{thm-lift-iff-potential-pullback-rumin-differential}]
By \cref{prop-lift-iff-potential-pullback-lift-exact}, the smooth contact map $ f\colon U_1\to H_2$ admits a contact lift $ F\colon \pi^{-1}( U_1)\to G_2$ if and only if $ \pi^* f^* \alpha_2\in\Omega^1( \pi^{-1}( U_1); V_2)$ is horizontally exact. 

Let $v_1,\ldots,v_{m}$ be a basis of $ V_1$. Consider the decomposition $ \rho_1 = \sum_{j=1}^{m} v_j \rho^j_{1}$ of the 2-cocycle $ \rho_1\in\Omega^2( H_1; V_1)$. By applying \cref{lemma-rumin-exactness} to $ \pi^* f^* \alpha_2$, we find that a contact lift exists if and only if $d_c f^{*}  \alpha_2 = \sum_{ j = 1 }^{ m } c_j \pi_{E_0} \rho^j_{1}$ for some constant vectors $c_j\in V_2$, $1 \leq j \leq m$. Therefore  
\begin{equation*}
    L\colon  V_1\to V_2,\quad L(v_j)=c_j, \quad\text{for $j = 1,\dots,m$},
\end{equation*}
defines a linear map for which $L \circ \pi_{E_0} \rho_1 = d_c f^{*} \alpha_2$. The converse is immediate as well.
\end{proof}

\section{Lifts and Lie algebra cohomology}\label{sec-common-extension}
In this section, we prove \cref{thm-if-lift-cohomology-constant,thm-max-weight-implies-lift}. First, we compare the Pansu derivatives of a smooth contact map and its lift.

\begin{lemma}\label{lemma-common-derivative-extension-is-necessary}
    Let $ V_1\to G_1\to H_1$ and $ V_2\to G_2\to H_2$ be central extensions of Carnot groups, and let $ U_1\subset H_1$ be a domain.
    Let $ F\colon \pi_1^{-1}( U_1)\to G_2$ be a smooth contact lift of a smooth contact map $ f\colon U_1\to H_2$.
    Then there exists a graded linear map $ \varphi\colon  V_1\to V_2$ such that, for every $h\in U_1$, there exists a graded Lie group homomorphism $ \psi_{h}\colon  G_1\to  G_2$ for which the following diagram commutes:
	\begin{center}
		\begin{tikzcd}
			 V_1 \rar\dar{ \varphi}
            &  G_1 \rar\dar{ \psi_{h}}
            &  H_1\dar{ d_Pf(h)}
            \\
			 V_2 \rar
            &  G_2 \rar
            &  H_2
		\end{tikzcd}
	\end{center}
\end{lemma}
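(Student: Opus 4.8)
The idea is to extract the graded linear map $\varphi$ from the fiber behavior of $F$ recorded in \Cref{lemma-lipschitz-lift-is-a-homomorphism-on-fibers}, and to produce each $\psi_h$ as the Pansu differential of $F$ at a point in the fiber over $h$. First I would reduce to the rank-preserving case on the codomain: by \Cref{lemma-lift-image-abelian-component} we may factor $F=\tilde F\cdot k$ with $\tilde F$ a contact lift into $\tilde G_2$ of rank equal to $H_2$, and the abelian factor $W_2$ only contributes a graded linear piece to $\varphi$ that can be prescribed arbitrarily, so it suffices to build the diagram for $\tilde F$ and then absorb $W_2$. After this reduction, \Cref{lemma-lipschitz-lift-is-a-homomorphism-on-fibers} gives a Lie group homomorphism $\Phi\colon V_1\to V_2$ with $F(gk)=F(g)\Phi(k)$; set $\varphi\coloneqq\Phi_*\colon V_1\to V_2$. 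The homomorphism $\Phi$ intertwines the dilations of $V_1$ and $V_2$ because $F$ is a contact lift and contact lifts are rank-preserving on the codomain, which forces $\varphi$ to be graded — this is the point where I would be most careful, since a priori a Lie group homomorphism between the abelian groups $V_1,V_2$ need not respect the gradings; the contact condition is exactly what rules out the non-graded possibilities (cf. the argument that $g\mapsto\tilde\Phi(g,k)$ must be constant in \Cref{lemma-lipschitz-lift-is-a-homomorphism-on-fibers}).

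Next, fix $h\in U_1$ and pick any point $g\in\pi_1^{-1}(h)\subset G_1$. Since $F$ is smooth and contact, it is locally Lipschitz, so by the Pansu--Rademacher theorem $F$ has a Pansu differential at almost every point; but more is true here — because $F$ is a smooth contact map, the rescalings $\delta_{1/\lambda}\circ L_{F(g)}^{-1}\circ F\circ L_g\circ\delta_\lambda$ converge as $\lambda\to0^+$ to a graded homomorphism $\psi_h\coloneqq d_PF(g)\colon G_1\to G_2$ at \emph{every} $g$, and this limit is independent of the choice of $g$ in the fiber, since two such choices differ by right multiplication by an element of $V_1$ which the Pansu differential does not see (this uses $F(gk)=F(g)\Phi(k)$ and centrality of $V_1$, so conjugating the rescaling by $L_{\Phi(k)}$, which commutes with everything, leaves the limit unchanged). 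Then I would verify commutativity of the left square: applying $\psi_h$ to the fiber direction, $\psi_h|_{V_1}$ is the Pansu differential of the fiberwise homomorphism $k\mapsto F(g)\Phi(k)F(g)^{-1}$, which (being already a graded homomorphism between the Carnot groups $V_1,V_2$) equals $\Phi=\varphi$; so $\psi_h\circ\iota_1=\iota_2\circ\varphi$. Commutativity of the right square follows by projecting: $\pi_2\circ F=f\circ\pi_1$, and taking Pansu differentials of both sides at $g$ — using that $\pi_1,\pi_2$ are graded homomorphisms, so their Pansu differentials are themselves, and that the Pansu differential is functorial under composition — yields $\pi_{2,*}\circ(\psi_h)_*=(d_Pf(h))_*\circ\pi_{1,*}$, i.e. $\pi_2\circ\psi_h=d_Pf(h)\circ\pi_1$.

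The main obstacle, as indicated above, is establishing that the limits defining $\psi_h$ exist everywhere and are graded Lie group homomorphisms, rather than merely existing a.e.\ as abstract graded homomorphisms. Here I would invoke that a smooth contact map between Carnot groups has a Pansu differential at every point — this is a local statement following from Taylor expansion in exponential coordinates together with the contact condition constraining the low-order terms — so $d_PF(g)$ is well-defined for all $g$, and the convergence is uniform on compacta by the general properties of the Pansu differential recorded in \Cref{sec-preliminary-pansu-pullback}. Independence of $g$ within the fiber and the two square-commutativity checks are then, as sketched, routine consequences of functoriality of $d_P$ under composition with graded homomorphisms and the identities $\pi_2\circ F=f\circ\pi_1$ and $F(g\cdot k)=F(g)\cdot\Phi(k)$.
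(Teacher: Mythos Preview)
Your proposal is correct and follows essentially the same route as the paper: reduce to the equal-rank codomain case, invoke \Cref{lemma-lipschitz-lift-is-a-homomorphism-on-fibers} to obtain $\Phi$, take $\psi_h$ to be the Pansu differential $d_PF(g)$ for $g\in\pi_1^{-1}(h)$, and verify the two squares via functoriality of $d_P$ under the graded projections.

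The one point where your exposition diverges from the paper is the definition of $\varphi$. You set $\varphi=\Phi$ and then argue that $\Phi$ itself is graded; the paper instead \emph{defines} $\varphi h\coloneqq\lim_{\lambda\to 0^+}\delta_{1/\lambda}\Phi(\delta_\lambda h)$, i.e.\ as $d_PF(g)|_{V_1}$, which is graded automatically since Pansu differentials are graded homomorphisms. Your claim that $\Phi$ is graded is in fact true---for $k\in V_1^{[j]}$ a nonzero component of $\Phi(k)$ in $V_2^{[l]}$ with $l\neq j$ would force $d_{G_2}(e,\exp(t\Phi(k)))\asymp |t|^{1/l}$ to violate the local Lipschitz bound $\lesssim d_{G_1}(e,\exp(tk))\asymp|t|^{1/j}$ either as $t\to 0$ (if $l>j$) or as $t\to\infty$ along the fiber (if $l<j$)---so the two definitions of $\varphi$ coincide. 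The paper's formulation is slightly cleaner because it sidesteps this argument entirely; your sketch (``the contact condition rules out the non-graded possibilities'') is pointing at exactly this Lipschitz obstruction but would benefit from being made explicit.
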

\begin{proof}
    Consider first the case where $\operatorname{rank}( G_2)=\operatorname{rank}( H_2)$. In this case, we may apply \cref{lemma-lipschitz-lift-is-a-homomorphism-on-fibers}, and obtain a homomorphism $ \Phi\colon V_1\to V_2$ such that $ F(gk) =  F(g) \Phi(k)$ for all $g\in  \pi_1^{-1}(  U_1 )$ and $k\in V_1$.

    Let $g\in \pi_1^{-1}(  U_1 )$.
    Computing the Pansu derivative in a direction $h\in  V_1$, we obtain
    \begin{align*}
         d_P{ F}(g)h 
        &= \lim_{\lambda\to 0^{+}} \delta_{1/\lambda}\Big( F(g)^{-1} F(g \delta_{\lambda}h)\Big)
        \\&= \lim_{\lambda\to 0^{+}} \delta_{1/\lambda}\Big( F(g)^{-1} F(g) \Phi( \delta_{\lambda}h)\Big)
        \\&= \lim_{\lambda\to 0^{+}} \delta_{1/\lambda}\Big( \Phi( \delta_{\lambda}h)\Big) \eqqcolon  \varphi h.
    \end{align*}
    Since the projections $ \pi_1$ and $ \pi_2$ are homomorphisms, we have $ \pi_2\circ d_P{ F}(g) =  d_P{ f}( \pi_1 g)\circ \pi_1$. This gives us the commutative diagram
    \begin{center}
		\begin{tikzcd}
             V_1 \rar\dar{ \varphi}
            &  G_1\rar{ \pi_1}\dar{ d_P{ F}(g)}
            &  H_1\dar{ d_P{ f}( \pi_1 g)}
            \\
             V_2 \rar
            &  G_2\rar{ \pi_2}
            &  H_2
		\end{tikzcd}
	\end{center}
    Since $ \varphi$ is a graded linear map independent of the point $g$, the claim is proved in the equal rank case.

    If instead $\operatorname{rank}( G_2)>\operatorname{rank}( H_2)$, then, as in \cref{lemma-lift-image-abelian-component}, we may decompose $ G_2$ as a direct product of Carnot groups $ G_2\simeq\tilde{ G }_2\times  W_2$, with $ W_2\subset  V_2$ and $\operatorname{rank}( \tilde{G}_2 ) = \operatorname{rank}(H_2)$, and consider the reduced central extension $ V_2/ W_2\to\tilde{ G }_2 \to  H_2$ and reduced lift $\tilde{ F}= \pi\circ F\colon \pi_1^{-1}( U_1)\to\tilde{G}_2$, where $ \pi\colon  G_2\to\tilde{G}_2$ is the quotient projection.

    By construction $\operatorname{rank}(\tilde{G}_2)=\operatorname{rank}( H_2)$, so we may apply the previous argument to the reduced lift $\tilde{ F}$. We obtain the commutative diagram
    \begin{center}
		\begin{tikzcd}
             V_1 \rar\dar{\tilde{ \varphi}}
            &  G_1\rar\dar{ d_P{\tilde{ F}}(g)}
            &  H_1\dar{ d_P{ f}( \pi_1 g)}
            \\
             V_2/ W_2 \rar\dar[hook]
            & \tilde{G}_2\rar\dar[hook]
            &  H_2\dar{\operatorname{id}}
            \\
             V_2 \rar
            &  G_2\rar
            &  H_2
		\end{tikzcd}
	\end{center}
    The composition of $\tilde{ \varphi}\colon  V_1\to V_2/ W_2$ and the inclusion $ V_2/ W_2\to V_2$ gives the required graded linear map $ \varphi$.
\end{proof}

\begin{proof}[Proof of \cref{thm-if-lift-cohomology-constant}]
    By \cref{lemma-common-derivative-extension-is-necessary}, there exists a graded linear map $ \varphi\colon V_1\to V_2$ and a family of graded homorphisms $\{  \psi_{h} \}_{ h \in  U_1 }$ with the commutative diagram
    \begin{center}
		\begin{tikzcd}
             V_1 \rar\dar{ \varphi}
            &  G_1\rar\dar{ \psi_{h}}
            &  H_1\dar{ d_P{ f}(h)}
            \\
             V_2 \rar
            &  G_2\rar
            &  H_2
		\end{tikzcd}
	\end{center}
    By \cref{lemma-homomorphism-central-extension}, such a diagram implies that the corresponding Lie algebra homomorphisms satisfy 
    \begin{equation}\label{eq-lie-algebra-pansu-pullback-identity}
         \varphi\circ \rho_1 -  d_P{ f}(h)^* \rho_2 = d_0\mu_h
    \end{equation}
    for some linear map $\mu_h\colon \mathfrak{h}_1\to V_2$.
    
    Defining $ \omega\in\Omega^1( H_1; V_2)$ by $ \omega = d_0^{-1}(  \varphi\circ \rho_1 -  { f}_P^* \rho_2 )$, the identity \eqref{eq-lie-algebra-pansu-pullback-identity} may be restated as
    \begin{equation*}
         \varphi\circ \rho_1 -  f^{*}_{P} \rho_2 = d_0 \omega,
    \end{equation*}
    thereby proving the claim.
\end{proof}

    \Cref{thm-if-lift-cohomology-constant} and the following lemma allow us to reduce the existence of smooth contact lifts to a simpler situation.
\begin{lemma}\label{lemma-existence-of-lift-in-intermediate-extension-implies-lift}
	Let $ V_1\to G_1\to H_1$ and $ V_2\to G_2\to H_2$ be central extensions of Carnot groups by $2$-cocycles $ \rho_1$ and $ \rho_2$, respectively, and let $ f\colon H_1\supset U_1\to H_2$ be a smooth contact map, where $ U_1$ is an open set. Suppose that $ \varphi\colon V_1\to V_2$ is a graded linear map. Consider the central extension of Carnot groups $\operatorname{im}( \varphi)\to  \hat{G}_1 \to  H_1$ by $ \hat{\rho}_1 =  \varphi \circ  \rho_1$ and the graded homomorphism $ \psi \colon  G_1 \to  \hat{G}_1$ from \Cref{lemm:centralextension:morphism}. Then, if $ f$ admits a smooth contact lift $\hat{ F}\colon  \hat{G}_1\supset  \hat{\pi}_1^{-1}( U_1)\to  G_2$, the map $ F = \hat{ F}\circ  \psi|_{ \pi_1^{-1}( U_1)}$ is a smooth contact lift of $ f$.
\end{lemma}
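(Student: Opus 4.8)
The plan is to verify directly that $F = \hat{F}\circ\psi|_{\pi_1^{-1}(U_1)}$ satisfies the four defining properties of a smooth contact lift of $f$: it is well-defined on $\pi_1^{-1}(U_1)$, it is smooth, it is contact, and it satisfies $\pi_2\circ F = f\circ\pi_1$. There is essentially no serious obstacle here; the statement is a formal consequence of the explicit description of $\psi$ in \Cref{lemm:centralextension:morphism}, and the only point requiring (minor) care is the bookkeeping of domains.

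The key observation to record first is the compatibility $\hat{\pi}_1\circ\psi = \pi_1$. By \Cref{lemm:centralextension:morphism}, $\psi$ is the graded Lie group homomorphism whose differential is $\psi_{*}(X+Y) = X + \varphi(Y)$ for $X+Y\in\mathfrak{h}_1\oplus V_1 = \mathfrak{g}_1$, while the projection $\hat{\pi}_{1*}\colon\hat{\mathfrak{g}}_1 = \mathfrak{h}_1\oplus\operatorname{im}(\varphi)\to\mathfrak{h}_1$ kills the second summand. Hence $\hat{\pi}_{1*}\circ\psi_{*} = \pi_{1*}$ as Lie algebra homomorphisms, and since all the groups are simply connected, this integrates to the identity $\hat{\pi}_1\circ\psi = \pi_1$. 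Consequently $\psi\bigl(\pi_1^{-1}(U_1)\bigr) = \psi\bigl((\hat{\pi}_1\circ\psi)^{-1}(U_1)\bigr)\subset\hat{\pi}_1^{-1}(U_1)$, so $F$ is well-defined, and it is smooth as a composition of the smooth maps $\psi$ (a Lie group homomorphism) and $\hat{F}$.

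Next I would check the lift property: using $\hat{\pi}_1\circ\psi = \pi_1$ together with the lift property $\pi_2\circ\hat{F} = f\circ\hat{\pi}_1$ of $\hat{F}$ on $\hat{\pi}_1^{-1}(U_1)$, we obtain $\pi_2\circ F = \pi_2\circ\hat{F}\circ\psi = f\circ\hat{\pi}_1\circ\psi = f\circ\pi_1$ on $\pi_1^{-1}(U_1)$. For the contact property, $\psi$ is a graded Lie group homomorphism between Carnot groups, so it maps the horizontal layer $\mathfrak{g}_1^{[1]}$ into the horizontal layer $\hat{\mathfrak{g}}_1^{[1]}$ and is therefore a contact map; $\hat{F}$ is contact by assumption; and a composition of contact maps is contact, so $F$ is contact. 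This completes all the required verifications.

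The main (and only) obstacle is purely notational: one must be careful that the composition $\hat{F}\circ\psi$ is considered on the correct cylinder, i.e. that $\psi$ carries $\pi_1^{-1}(U_1)$ into $\hat{\pi}_1^{-1}(U_1)$, which is exactly the content of the identity $\hat{\pi}_1\circ\psi = \pi_1$ established above.
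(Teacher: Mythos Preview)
Your proposal is correct and is essentially the approach the paper has in mind: the paper simply states that ``the proof is direct from the definitions'' without spelling out any details. Your write-up makes explicit the key identity $\hat{\pi}_1\circ\psi=\pi_1$ and the routine verifications (well-definedness, smoothness, contact, lift), which is exactly what ``direct from the definitions'' amounts to here.
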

    The proof is direct from the definitions. We are ready to prove \Cref{thm-max-weight-implies-lift}.

\begin{proof}[Proof of \cref{thm-max-weight-implies-lift}]
	By assumption, $ { f}_P^* \rho_2 =  \varphi\circ \rho_1 + d_0 \omega$ for some $ \omega \in \Omega^{1}(  U_1;  V_2 )$ and some graded linear map $ \varphi\colon V_1\to V_2$. By applying \cref{lemma-existence-of-lift-in-intermediate-extension-implies-lift}, we may assume that $ V_1 \subset  V_2$ and $ \varphi$ is the inclusion map. We suppress the notation for the inclusion map in the subsequent argument.
    
    We claim that it suffices to prove that 
	\begin{equation}\label{eq-commuting-pr}
		d_c  f_{P}^{*} \alpha_2 = \pi_{E_0}\pi_{E}  f_{P}^{*} \rho_2
	\end{equation}
    for a potential $ \alpha_2$ of $ \rho_2$.    
    
    We reduce to \Cref{thm-lift-iff-potential-pullback-rumin-differential} from \eqref{eq-commuting-pr} by claiming $d_c  f^{*} \alpha_2 = \pi_{E_0} \rho_1$. First, observe that the left-hand side of \eqref{eq-commuting-pr} can be replaced by $d_c f^{*} \alpha_2$ because the horizontal components of $ f_{P}^{*} \alpha_2$ and $ f^{*} \alpha_2$ coincide.

    We recall that $ \rho_1 = \pi_{E_0} \rho_1 + d_0\mu$ for a left-invariant $\mu \in \ker(d_0)^{\perp}$. We may also assume that $ \omega \in  \ker(d_0)^{\perp}$. We denote $\tilde{ \omega} = \mu +  \omega$. 
    We observe that in the identity 
    \begin{equation*}
         { f}_P^* \rho_2 = \pi_{E_0} \rho_1 + d_0\tilde{ \omega},
    \end{equation*}
    there cannot be any cancellation on the right-hand side, since $E_0\subset(\operatorname{im} d_0)^\perp$. By assumption, the weight of $f_P^{*}\rho_2$ is at least $\max\{ \operatorname{wt}(\tau): 0\ne \tau\in E_0^2 \}$, so the same must be true for $d_0\tilde{ \omega}$.
    
    On the other hand, we have $\pi_{E} \tilde{\omega} = 0$ since $\pi_E = I - P d_0^{-1} d$ for $1$-forms and $P$ is a left-inverse of $d_0^{-1} d|_{ \mathrm{im}( d_0^{-1} ) }$ by \cite[Lemma]{Rumin-1999-differential_geometry_on_cc_spaces}. Thus $\pi_Ed\tilde{ \omega} = d\pi_E\tilde{ \omega} = 0$. We may then rewrite
    \begin{equation*}
        \pi_{E_0}\pi_{E}d_0 \tilde{\omega} = -\pi_{E_0}\pi_{E}(d-d_0) \tilde{\omega}.
    \end{equation*}
    By the assumption that $\tilde{ \omega}\in (\ker d_0)^\perp$, the weights of $\tilde{\omega}$ and $d_0\tilde{\omega}$ are equal, and thus the weight of $( d- d_0 )\tilde{\omega}$ is strictly larger than $\max\{ \operatorname{wt}(\tau): 0\ne \tau\in E_0^2 \}$. Since $\pi_{E_0}\pi_{E}$ does not decrease weights, we find that $\pi_{E_0}\pi_{E}d_0 \omega = 0$. We obtain
    \begin{equation}\label{eq-commuting-pr-parttwo}
        \pi_{E_0}\pi_{E} f_{P}^{*} \rho_2 = \pi_{E_0} \rho_1,
    \end{equation}
    showing that once we prove \eqref{eq-commuting-pr}, we may apply \cref{thm-lift-iff-potential-pullback-rumin-differential} and \eqref{eq-commuting-pr-parttwo} to conclude the proof.

    For each $\varepsilon \in (0,1]$, let $ U_{1,\varepsilon} = \left\{ g \in  U_1 \colon d( g,  H_1 \setminus  U_1 ) > \varepsilon \right\}$. To prove \eqref{eq-commuting-pr}, we apply the center of mass mollification $ f_{\varepsilon} \colon  U_{1,\varepsilon} \rightarrow  H_2$ from \cite{Kleiner-Muller-Xie-2022-convolution_trick} to $ f$. The approximation theorem \cite[Theorem 1.3]{Kleiner-Muller-Xie-2022-convolution_trick} implies that
	\begin{equation}\label{eq-kmx-approximation}
		 f_{\varepsilon}^{*}\tau \wedge \nu
		\rightarrow
		 f^{*}_P\tau \wedge \nu
		\quad\text{in $L^{1}( \Omega^{n}( U_1) )$}
	\end{equation}
	whenever $\tau \in \Omega^{k}(  H_2 )$ and $\nu \in \Omega^{n-k}_c(  U_1 )$ satisfy
	\begin{equation*}
		\mathrm{wt}( \tau ) + \mathrm{wt}( \nu ) \geq  Q_{ H_1}.
	\end{equation*}        
	Fix a compactly supported form $\eta\in E_0^{n-2}( U_1)$. To prove \eqref{eq-commuting-pr}, it suffices to prove that
	\begin{equation}\label{eq-commuting-pr-integral-version}
		\int_{ U_1} d_c  f_{P}^{*} \alpha_2\wedge \eta = \int_{ U_1}\pi_{E_0}\pi_{E}  f_{P}^{*} \rho_2\wedge \eta.
	\end{equation}
	
	Observe that, for $\pi_E\eta\in E^{n-2}$, since $E \subset \mathrm{ker}( d_0^{-1} )$ and $d_0$ is the zero map on codimension one forms, we have $\pi_{E_0}\partial \pi_E\eta = \partial \pi_E\eta$. Expanding out the Rumin differential $d_c=\pi_{E_0}d\pi_E\pi_{E_0}$, this implies that $\partial_c\eta = \partial\pi_E\eta$. Let $\varepsilon>0$ be small enough that the support of $\eta$ is contained in $ U_{1,\varepsilon}$. Then we have
	\begin{equation}\label{eq-commuting-pr-for-mollification}
		\int_{ U_{1,\varepsilon}}  f_{\varepsilon}^{*} \alpha_2 \wedge \partial_c\eta
		=\int_{ U_{1,\varepsilon}}  f_{\varepsilon}^{*} \rho_2 \wedge \pi_E\eta.
	\end{equation}
	This implies that an analogue of \eqref{eq-commuting-pr-integral-version} holds for the mollification pullback $ f_{\varepsilon}^*$. We claim that the weight assumption on $ \rho_2$ allows us to apply the approximation result \eqref{eq-kmx-approximation} to take the limit as $\varepsilon\to 0$ and obtain \eqref{eq-commuting-pr-integral-version}.
	
	For the left-hand side, we do not need the weight assumption. Indeed, since any horizontal 1-form has weight 1, all forms in $E_0^1$ have weight 1; by duality all forms in $E_0^{n-1}$ have weight $ Q_{ H_1}-1$. Hence
    \begin{equation*}
		\lim\limits_{\varepsilon\to 0}\int_{ U_{1,\varepsilon}}  f_{\varepsilon}^* \alpha_2 \wedge \partial_c\eta
		= \int_{ U_1}  f^{*}_P \alpha_2 \wedge \partial_c\eta = \int_{ U_1} d_c f_P^* \alpha_2 \wedge \eta.
    \end{equation*}

	For the right-hand side, we use the weight assumption that $\operatorname{wt}( \rho_2)\geq \max\{ \operatorname{wt}(\tau): 0\ne \tau\in E_0^2 \}$. By duality, for $\eta\in E_0^{n-2}$, we have $\operatorname{wt}(\eta) \geq  Q_{ H_1}-\operatorname{wt}( \rho_2)$. Since $\pi_E\colon E_0\to E$ does not decrease weights, we conclude that
	\begin{equation*}
		\lim\limits_{\varepsilon\to 0}\int_{ U_{1,\varepsilon}}  f_{\varepsilon}^{*} \rho_2 \wedge \pi_E\eta
		= \int_{ U_1}  f^{*}_P \rho_2 \wedge \pi_E\eta
		= \int_{ U_1} \pi_{E_0}\pi_E { f}_P^* \rho_2 \wedge \eta.
	\end{equation*}
	Thus taking the limit of \eqref{eq-commuting-pr-for-mollification} as $\varepsilon\to 0$ proves \eqref{eq-commuting-pr-integral-version}.
\end{proof}

\section{Lifts in Lipschitz 1-connected spaces}\label{sec:lip1-connected}
In this section, we prove \cref{thm-trivial-homotopy-implies-lift}. We begin by formulating a version of Stokes' theorem for the Pansu pullback.
\begin{lemma}\label{lemma-stokes}
    Let $ H$ be a Carnot group and let $ \alpha, \omega\in \Omega^1( H; V)$. If $u\colon \mathbb{D} \to  H$ is Lipschitz with boundary trace $ \gamma \colon \mathbb{S}^1 \to  H$, then
    \begin{align}\label{eq:Pansupullback:Stokes}
        \int_{\mathbb{D}} u_{P}^{*}( d \alpha + d_0 \omega ) = \int_{ \gamma}  \alpha.
    \end{align}
\end{lemma}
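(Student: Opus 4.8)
The plan is to reduce the Pansu pullback on the left-hand side to the ordinary pullback, apply the classical Stokes theorem, and dispose of the extra $d_0\omega$-term by a weight count. Since the Pansu pullback, the de Rham differential, and $d_0$ all act componentwise on $V$-valued forms, I first reduce to the case $V=\RR$.

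\textbf{Step 1: $D_P u = du$ almost everywhere.} Fixing exponential coordinates identifies $H$ with $\RR^n$, and on the compact set $u(\overline{\mathbb{D}})$ the sub-Riemannian distance dominates the Euclidean one, so $u\colon\mathbb{D}\to\RR^n$ is Lipschitz for the Euclidean metric; by Rademacher's theorem it is differentiable a.e. At a.e.\ point the Pansu differential $d_P u(x)$ also exists by the Pansu--Rademacher theorem. At a point $x$ where both exist, writing out the defining limit $\delta_{1/\lambda}\circ L_{u(x)}^{-1}\circ u\circ L_x\circ\delta_\lambda$ and using that $\delta_{1/\lambda}$ blows up every layer of weight $\geq 2$, convergence of the limit forces the left-trivialized differential $(L_{u(x)^{-1}})_*\circ du(x)$ to take values in the horizontal layer $\mathfrak{h}^{[1]}$, and then the limit is exactly this left-trivialized differential; hence $D_P u(x) = du(x)$. (Alternatively, horizontality of $du(x)$ a.e.\ follows by restricting $u$ to lines, which are Lipschitz, hence horizontal, curves in $H$, and applying Fubini.) Consequently $u_P^*\eta = u^*\eta$ a.e.\ for every smooth form $\eta$ on $H$, where $u^*$ denotes the a.e.-defined ordinary pullback.

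\textbf{Step 2: Stokes and the weight count.} By Step 1, $\int_{\mathbb{D}} u_P^*(d\alpha + d_0\omega) = \int_{\mathbb{D}} u^* d\alpha + \int_{\mathbb{D}} u^* d_0\omega$. For the first integral, $u^*\alpha$ is a $1$-form with $L^\infty$ coefficients whose distributional exterior derivative is $u^* d\alpha$ and whose boundary trace is $\gamma^*\alpha$, so Stokes' theorem for Lipschitz maps gives $\int_{\mathbb{D}} u^* d\alpha = \int_\gamma\alpha$. For the second integral, note that since $du(x)$ is horizontal a.e., the pullback $u^*$ kills every component of weight $\neq 2$ of a $2$-form: a left-invariant pure-weight $2$-form of weight $\neq 2$ involves at least one covector of weight $\geq 2$ and hence annihilates $\bigwedge^2\mathfrak{h}^{[1]}$. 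Let $\bar\omega = \omega - \pi_{E_0}\omega$ be the non-horizontal part of $\omega$; it has weight $\geq 2$. Since horizontal $1$-forms are $d_0$-closed, $d_0\bar\omega = d_0\omega$; and $(d-d_0)\bar\omega$ has weight $\geq 3$ because $d-d_0$ strictly raises weight, so $d\bar\omega$ and $d_0\omega$ have the same weight-$2$ part and therefore $u^* d_0\omega = u^* d\bar\omega$ a.e. Hence $\int_{\mathbb{D}} u^* d_0\omega = \int_{\mathbb{D}} d(u^*\bar\omega) = \int_\gamma\bar\omega$, which vanishes because $\gamma$ is horizontal (being Lipschitz into $H$) while $\bar\omega$, being of weight $\geq 2$, annihilates horizontal vectors. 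Combining, $\int_{\mathbb{D}} u_P^*(d\alpha + d_0\omega) = \int_\gamma\alpha$.

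\textbf{Main obstacle.} The delicate part is Step 1: making the identification $D_P u = du$ a.e.\ rigorous relies on the Pansu--Rademacher theorem together with the description of the blow-up limit, and one must also be careful that the a.e.-defined pullback $u^*\eta$ of a smooth form behaves well under integration, i.e.\ that Stokes' theorem applies to the Lipschitz map $u$. The weight bookkeeping in Step 2 that eliminates the $d_0\omega$-term is the actual content of the lemma — ordinary Stokes only handles $d\alpha$ — but it is routine once Step 1 is in hand.
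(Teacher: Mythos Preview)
Your argument is correct, and Step~1 matches the paper's observation that the left-trivialized differential of $u$ coincides with the Pansu differential at every point where the latter exists.  The difference lies in how you kill the $d_0\omega$-term.  The paper does this in one line: since the Pansu differential is a Lie algebra homomorphism, $u_P^*$ commutes with $d_0$, and because the Lie algebra differential $d_0$ is identically zero on $\RR^2$, one has $u_P^*(d_0\omega)=d_0(u_P^*\omega)=0$ pointwise a.e.; there is nothing to integrate.  Your route instead replaces $d_0\omega$ by the weight-$2$ part of $d\bar\omega$, applies Stokes a second time, and then uses horizontality of $\gamma$ to kill the boundary integral $\int_\gamma\bar\omega$.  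This is valid but introduces an unnecessary boundary term and a second application of Stokes for Lipschitz maps.  The paper's approach has the advantage of explaining \emph{why} the $d_0$-term is irrelevant in a way that generalizes: it is not about weights on the target but about the domain being abelian.
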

\begin{proof}
We have that $u_{P}^{*}( d \alpha + d_0 \omega ) = u^{*}(d \alpha)$ almost everywhere. Indeed, $u_{P}^{*}(d \alpha) = u^{*}(d \alpha)$ follows from the fact that, in this case, the left-trivialized differential of $u$ coincides with the Pansu differential whenever the Pansu differential exists. The equality $u_{P}^{*}(d \alpha) = u_{P}^{*}( d \alpha + d_0 \omega )$ follows from the fact that $d_0$ commutes with the Pansu pullback, and the fact that $d_0$ is the zero map in the Euclidean space.

With these observations, Stokes' equation implies
\begin{align*}
    \int_{  \gamma }  \alpha
    =
    \int_{ \mathbb{D} } u^{*}(d \alpha).
\end{align*}
To see this, we may reduce the claim to exponential coordinates by recalling that $\log_{ H} \circ u$ has a Lipschitz extension to an open neighbourhood of $\overline{\mathbb{D}}$. Then the claimed equality follows, e.g., by considering $\mathbb{D}$ as an integral current $[\mathbb{D}]$ with oriented boundary $[\mathbb{S}^1]$ and using standard pushforward results for Lipschitz maps, see \cite[4.1.14]{Fed:69} or \cite[Section 2]{AK:00:current}. So \eqref{eq:Pansupullback:Stokes} follows.
\end{proof}

\begin{proof}[Proof of \cref{thm-trivial-homotopy-implies-lift}]
By assumption $ { f}_P^* \rho_2 =  \varphi\circ \rho_1 + d_0 \omega$ for some $ \omega \in \Omega^{1}(  U_1;  V_2 )$ and some graded linear map $ \varphi\colon V_1\to V_2$.  By applying \cref{lemma-existence-of-lift-in-intermediate-extension-implies-lift}, we may assume that $ V_1 \subset  V_2$ and $ \varphi$ is the inclusion map. We suppress the notation for the inclusion map in the subsequent argument.

Let $h\in  U_1$ and $r>0$ be such that the open ball $B( h, \tfrac{3\lambda r}{2} )$ is contained in $ U_1$. Since such balls cover the simply connected open domain $ U_1$, by \cref{lemma-local-lift-to-global-lift}, it suffices to show that there exists a smooth contact lift $ F\colon \pi_1^{-1}(B(h,r))\to G_2$ of $ f$. Furthermore, by \cref{lemma-vertical-invariance-of-horizontal-exactness}, it suffices to show that there exists a smooth contact lift $ F\colon \tilde{U}_1\to G_2$, for any smaller open subset $ \tilde{U}_1\subset \pi_1^{-1}(B(h,r))$ such that $ \pi_1( \tilde{U}_1)=B(h,r)$.

Fix a basepoint $g\in \pi_1^{-1}(h)$ and let $ \tilde{U}_1 = B( g, r )\subset G_1$ be the open ball. We will apply \cref{lem-lift-iff-loops-to-loops} to prove the existence of a lift, so let $ \gamma\in \Gamma_{\mathrm{LIP}}(g, \tilde{U}_1)$ be an arbitrary closed horizontal curve. We write $ \gamma$ as a concatenation $ \gamma = \sigma_1 \star \dots \star \sigma_m$, where each $\sigma_j \colon [0,1] \rightarrow  \tilde{U}_1$ has length $\ell( \sigma_j ) \leq r$. We form a new closed horizontal curve $ \gamma_j \colon [0,1] \rightarrow  \tilde{U}_1$ by first joining $g$ to $\sigma_j(0)$ and $\sigma_{j}(1)$ to $g$ by a curve of length $\leq r$ and concatenating these curves with $\sigma_j$ to form $ \gamma_j$. We lose no generality in assuming that the curve joining $\sigma_{j}(1)$ to $g$ and the curve joining $g$ to $\sigma_{j+1}(0)$ are inversions of one another. We prove that
\begin{align*}
    \int_{  f \circ  \pi_1 \circ  \gamma }  \alpha_2
    =
    \sum_{ j = 1 }^{ m } \int_{  f \circ  \pi_1 \circ  \gamma_j }  \alpha_2
    =
    \sum_{ j = 1 }^{ m } \int_{  \pi_1 \circ  \gamma_j }  \alpha_1
    =
    \int_{  \pi_1 \circ  \gamma }  \alpha_1
\end{align*}
for potentials $ \alpha_1$ and $ \alpha_2$ of $ \rho_1$ and $ \rho_2$, respectively. Observe that this suffices for the claim. Indeed, since $ \gamma\in \Gamma_{\mathrm{LIP}}(g, \tilde{U}_1)$, we have $\int_{  \pi_1 \circ  \gamma }  \alpha_1=0$, implying that condition \eqref{eq-loops-to-loops} holds. Then \cref{lem-lift-iff-loops-to-loops} implies that a smooth contact lift of $ f$ exists.

To finish, without loss of generality $ \gamma = \gamma_j$ and $\ell(  \gamma ) \leq 3r$. We may reparametrize it as a $\tfrac{3r}{4}$-Lipschitz curve $ \gamma \colon \mathbb{S}^1\to \tilde{U}_1$. By the Lipschitz $1$-connected assumption, there exists a $\tfrac{3\lambda r}{4}$-Lipschitz map $u\colon\mathbb{D}\to H_1$, with boundary trace $ \pi_1\circ  \gamma$, satisfying
\begin{align*}
    u( \mathbb{D} ) \subset B( h, \tfrac{3\lambda r}{2} ) \subset   U_1.
\end{align*}
By assumption, $ { f}_P^* \rho_2 =  \rho_1+ d_0 \omega$. From the chain rule, we obtain that
\begin{align*}
    (  f \circ u )_{P}^{*} \rho_2
    =
    u^{*}_{P}(  f^{*}_{P} \rho_2 )
    =
    u^{*}_{P}(  \rho_1 + d_0\omega )
\end{align*}
almost everywhere. \cref{lemma-stokes}, therefore, implies that
\begin{align*}
    \int_{  f \circ  \pi_1 \circ  \gamma }  \alpha_2
    &=
    \int_{\mathbb{D}} (  f \circ u )^{*}_{P} \rho_2
    =
    \int_{\mathbb{D}} u_{P}^{*}(  \rho_1 + d_0\omega ) = \int_{  \pi_1 \circ  \gamma }  \alpha_1.
\end{align*}
The proof is complete.
\end{proof}

\section{Examples}\label{sec:examples}

    In Examples \ref{ex:pathlifting}, \ref{ex:lagrangian-legendrian}, and \ref{rem-factorization-of-lifts-filiform}, we discuss connections between existing literature on contact lifts and our main results. \Cref{ex:largeweight} shows a simple application of our main results. Examples \ref{rem:windingmap} and \ref{rem:non-simply-connected} are concrete examples of the lifting problem. Lastly, \Cref{rem:contact-equation} reformulates the contact equations in terms of differential forms. 
    
    The following example shows that the lifting problem of horizontal curves can be understood as a special case of our main results.
\begin{example}\label{ex:pathlifting}
    Consider Carnot groups $ G$ and $ H$ with $\operatorname{rank}( G) = \operatorname{rank}( H)$, together with a graded homomorphism $ \pi \colon  G \to  H$ for which $ \pi_{*}$ is a submetry. The existence of lifts of horizontal curves $ \gamma\colon[0,1]\to  H$ to horizontal curves $\tilde{ \gamma}\colon[0,1]\to G$, with $ \pi \circ \tilde{ \gamma} =  \gamma$, can also be viewed as an application of \cref{thm-lift-iff-potential-pullback-rumin-differential}. 
    
    Indeed, decomposing the ideal $\ker \pi$ as $\ker \pi =   V^{[2]}\oplus\dots\oplus  V^{[s]}$ with $V^{[k]}\subset \mathfrak{g}^{[k]}$, we may consider $ G$ as an iterated central extension of Carnot groups $ G= G_s\to G_{s-1}\to\dots\to G_1= H$ with $\ker( G_{k}\to G_{k-1})\simeq   V^{[k]}$. Since $d_c=0$ for 1-forms on $\RR$, the criterion of \cref{thm-lift-iff-potential-pullback-rumin-differential} trivially holds for each central extension in the sequence, and we obtain the existence of lifts of horizontal curves.
\end{example}

    We next discuss the Lagrangian--Legendrian correspondence.
\begin{example}\label{ex:lagrangian-legendrian}
    Let $ \rho \in \bigwedge^{2}( \mathbb{R}^{2n})^{*}$ be the standard symplectic form $ \rho = \sum_{i = 1}^{n} dx_i \wedge dy_i$ for $z = (x,y) \in \mathbb{R}^{2n}$. The central extension $\mathbb{R} \rightarrow  G \rightarrow \mathbb{R}^{2n}$ by $ \rho$ is isomorphic to the $n$'th Heisenberg group $\mathbb{H}_n$. In Allcock's proof of the isoperimetric inequality in $\mathbb{H}_n$, for $n \geq 2$, Allcock lifted isotropic maps $ f \colon \mathbb{R}^2 \supset \mathbb{D} \rightarrow \mathbb{R}^{2n}$ to contact maps $ F \colon \mathbb{D}\rightarrow \mathbb{H}_{n}$ via a lifting construction similar to \Cref{sec:lifts}. Here, the isotropic condition is $ f_{P}^{*} \rho = 0$. It is also called the Lagrangian condition (see e.g.\ \cite{Schoen:Wolfson:01}). Since the Lie algebra differential $d_0$ is zero in Euclidean spaces, the Lagrangian condition is necessary for the existence of a lift $ F$ as above, recall \Cref{thm-if-lift-cohomology-constant}. The fact that it is sufficient follows either from \Cref{thm-trivial-homotopy-implies-lift} or \Cref{thm-max-weight-implies-lift}. We also note that Magnani extended Allcock's construction to the Allcock groups in \cite[Theorem 1.3]{Magnani-2010-contact-equations}. 
\end{example}

    If the cocycle defining the central extension on the image side has large enough weight, the lift condition holds automatically.
\begin{example}\label{ex:largeweight}
    Consider a central extension of Carnot groups $ V_2 \to  G_2 \to  H_2$ by $ \rho_2$ with $\operatorname{wt}( \rho_2) \geq s+2$ and a step $s$-Carnot group $H_1$. We claim that if $f \colon H_1 \supset U_1 \rightarrow H_2$ is a smooth contact map on a simply connected domain $U_1$, then there exists a smooth contact lift $F \colon  U_1 \rightarrow  G_2$ of $f$. We show this as a simple application of \Cref{thm-max-weight-implies-lift}.

    We first observe that $\mathrm{ker}(d_0 \colon \Omega^{2}( U_1 ) \rightarrow \Omega^{3}( U_1 ) )$ (resp. $E_0^2(U_1)$) has a left-invariant basis of pure weight elements whose maximal weight is at most $s+1$. To this end, given a $2$-cocycle $\rho$, the cocycle condition $\rho \in\ker (d_0)$ implies that if $i+j \geq s+2$, then $\rho(\mathfrak{h}_1^{[i]},\mathfrak{h}_1^{[j]})\subset\operatorname{span} \rho( \mathfrak{h}_1^{[i-1]}, \mathfrak{h}^{[j+1]}_1)$. Recursion leads to the conclusion $\rho(\mathfrak{h}_1^{[i]},\mathfrak{h}_1^{[j]})\subset\operatorname{span}\rho(\mathfrak{h}_1^{[1]},\mathfrak{h}_1^{[j+i-1]}) = \{0\}$. The first observation follows. To finish, it holds that $f_P^{*}\rho_2 = 0$ because $d_0$ commutes with the Pansu pullback and the weight of $f_{P}^{*}\rho_2$ is at least $s+2$. Consequently, the assumptions of \Cref{thm-max-weight-implies-lift} apply for the trivial extension $\{0\} \to  H_1 \to  H_1$. Thus there exists a smooth contact lift $F \colon  U_1 \rightarrow  G_2$ of $f$.
\end{example}

    For the following examples, we specialize the discussion to the first Heisenberg group $\mathbb{H}_1$ and the higher filiform groups. We recall that these groups are not Lipschitz $1$-connected (see e.g.\ \cite{Wenger-Young-2014-lipschitz-homotopy-groups-heisenberg}) and thus \Cref{thm-trivial-homotopy-implies-lift} is not applicable.
\begin{example}\label{rem-factorization-of-lifts-filiform}
    Consider the real filiform Carnot group $F^{s}$ of step $s$. The corresponding Lie algebra is given by
    \begin{align*}
        \mathfrak{f}^{s} = \mathrm{span}( X, Y ) \oplus \mathrm{span}( Z_2 ) \oplus \dots \oplus \mathrm{span}( Z_{s} ),
    \end{align*}
    where $[X, Y] = Z_2$ and $[X, Z_k] = Z_{k+1}$ are the only non-trivial relations. Consider the corresponding dual basis $X^{*}, Y^{*}, Z_2^{*}, \dots, Z_{s}^{*}$.
    
    Observe that $\mathbb{R} \to F^{s+1} \overset{ \pi_s}{\to} F^{s}$ is a central extension of Carnot groups by the cocycle $ \rho_s = X^{*} \wedge Z_{s}^{*}$. Here $F^{1}$ is isomorphic to $\mathbb{R}^2$ and $F^{2}$ is isomorphic to the first Heisenberg group $\mathbb{H}_1$. In fact, $\mathbb{R} \rightarrow F^{2} \overset{ \pi_1}{\to} \mathbb{R}^2$ is a central extension of Carnot groups by the volume form $dx \wedge dy$.

    Our results can be used to classify all smooth contact maps $ f \colon F^{s} \supset  U_1 \rightarrow F^{s}$, for simply connected $ U_1$, that lift to a smooth contact map $ F \colon F^{s+1} \supset  \pi_{s}^{-1}( U_1) \rightarrow F^{s+1}$. Indeed, we claim that it is necessary and sufficient that
    \begin{equation}\label{eq:liftingcondition:filiform}
         f_{P}^{*} \rho_s = \lambda  \rho_s \quad\text{for some $\lambda \in \mathbb{R}$.}
    \end{equation}
    Note that the weight of $ \rho_s$ (and thus also $ f_{P}^{*} \rho_s$) is $s+1$ while the maximum weight in the image of $d_0$ is $s$. Hence \eqref{eq:liftingcondition:filiform} is necessary (\Cref{thm-if-lift-cohomology-constant}) and sufficient (\Cref{thm-max-weight-implies-lift}) for the existence of a contact lift $ F$.

    When we identify $\mathbb{R}^2$ and $F^1$, the condition \eqref{eq:liftingcondition:filiform} is equivalent to $f$ having a constant Jacobian determinant, thereby relating to the results of Capogna--Tang \cite{Capogna-Tang-1995-quasiconformal_mappings_heisenberg} and Balogh--Hoefer-Isenegger--Tyson \cite{Balogh-Hoefer-Isenegger-Tyson-2006-horizonta_fractals_heisenberg}.

    The higher step case has also been considered previously. Indeed, comparing \eqref{eq:liftingcondition:filiform} to \cite[Theorem 1.1]{Xie-2015-quasiconformal_maps_on_filiform_groups} (see also Warhust \cite{War-contact-qc-real-filiform}) leads to the following: if $s \geq 3$ and $ f \colon F^{s} \rightarrow F^{s}$ is bi-Lipschitz (and smooth), it is an iterated lift of a (smooth) bi-Lipschitz map $\mathbb{R}^2 \to \mathbb{R}^2$. By the constructions of Korányi--Reimann \cite{koranyi-reimann-1985-qc-maps-on-the-heisenberg-group,koranyi-reimann-1995-foundations-for-the-theory-of-QC-maps-in-the-heisenberg-group}, the assumption $s \geq 3$ is necessary.
\end{example}

As an example related to \Cref{rem-factorization-of-lifts-filiform}, we consider a contact lift of a planar map to the first Heisenberg group that does not lift to the higher filiform groups.

\begin{example}\label{rem:windingmap}
We consider the standard \emph{winding map} $ f \colon  U \to  U$ (of degree $k \geq 2$) for $ U = \mathbb{R}^2 \setminus \{0\}$. Recall that, in polar coordinates, the winding map satisfies $(r,t) \mapsto (r, k t )$. More precisely, let $p\colon (0,\infty)\times \mathbb{R} \to  U$ and $q \colon (0,\infty) \times \mathbb{R} \to (0,\infty) \times \mathbb{R}$ be the maps $(r,t)\mapsto (r\cos(t),r\sin(t))$ and $(r,t)\mapsto (r,kt)$, respectively. Then $ f\circ p=p\circ q$.

The map $ f$ can be presented locally as the composition $p\circ q\circ p^{-1}$ of smooth maps and is thus smooth. Further, since
\[
\det (Dp)_{(r,t)} = \det \begin{pmatrix}
    \cos(t) & -r\sin(t) \\
    \sin(t) & r\cos(t)
\end{pmatrix} = r
\]
and $\det (Dq)_{(r,t)}=k$ for $r>0$ and $t \in \mathbb{R}$, it follows that $\det (D f)\equiv k$ in $ U$.

As the first Heisenberg group is a central extension of $\mathbb{R}^2$ by the volume form $dx \wedge dy$, the determinant being constant implies that there exists a lift, at least locally. In fact, we may define $ F \colon  \pi_1^{-1}(  U ) \rightarrow \mathbb{H}_1$ as $ F(x,y,z) = ( f(x,y),kz)$ in exponential coordinates. Below we justify the contact property of $ F$ and that $ F$ does not lift to a contact map from the filiform group $F^{3}$ into itself, cf. \Cref{rem-factorization-of-lifts-filiform}.

In exponential coordinates, a standard left-invariant frame is given by
\begin{align*}
    X &= \partial_x - \frac{y}{2} \partial_z, \quad Y = \partial_y + \frac{x}{2} \partial_z, \quad\text{and}\quad Z_2 = \partial_z,
\end{align*}
where the dual frame is
\begin{align*}
    X^{*} = dx, \quad Y^{*} = dy, \quad\text{and}\quad Z_2^{*} = dz - \frac{1}{2}( x dy - y dx ).
\end{align*}

In cylindrical coordinates $(r, t, z)$, we have
\begin{align*}
    X^{*} &= \cos(t) dr - r \sin(t) dt,
    \\
    Y^{*} &= \sin(t) dr + r \cos(t) dt, \quad\text{and}
    \\
    Z_2^{*} &= dz - \frac{1}{2}r^2 d t.
\end{align*}
A direct computation shows that $ F^{*}Z_2^{*} = k Z_2^{*}$, so $ F$ is contact as claimed.

The left-trivialized differential of $F$ is of block diagonal form by the contact equations and the fact that $F^{*}X^{*}( Z_2 ) = 0 = F^{*}Y^{*}(Z_2)$. Then, if $g = (x,y,z)$ in exponential coordinates, it holds that
\begin{align*}
    (D_P F)_{(x,y,z)}( a, b ) = ( D_P f )_{ (x,y) }( a ) + k b,
    \quad\text{for $(a,b) \in \mathbb{R}^2 \oplus \mathbb{R}$,}
\end{align*}
where we identify $\mathbb{R}^2$ as a subspace of $\mathbb{R}^3 = \mathbb{R}^2 \oplus \mathbb{R}$ through the linear map $a_1 \partial_1 + a_2 \partial_2 \mapsto a_1 X + a_2 Y + 0 \cdot Z_{2}$ for $a = ( a_1, a_2 )$. Therefore the equalities
\begin{align*}
     F^{*}_{P}Z_{2}^{*} =  F^{*}Z_{2}^{*} = k Z_{2}^{*},
\end{align*}
and
\begin{align*}
     F^{*}_{P} \rho_2
    &=
    (  F_{P}^{*}X^{*} ) \wedge (  F_{P}^{*}Z_2^{*} )
    =
    ( \cos( k t ) dr - r \sin( k t ) k d t )
    \wedge
    k Z_2^{*}
    \\
    &=
    k \cos(kt) dr \wedge dz
    -
    \frac{1}{2}
    r^2 \cos(kt) k dr \wedge dt
    -
    r\sin( k t ) k^2 dt \wedge dz
\end{align*}
hold. In contrast,
\begin{align*}
     \rho_2
    =
        \cos(t) dr \wedge dz
        -
        \frac{1}{2}
        r^2 \cos(t) dr\wedge dt
        -
        r \sin(t) dt \wedge dz.
\end{align*}
By comparing the coefficients of $ F^{*}_{P} \rho_2$ and $ \rho_2$, it follows that no point $x_0 \in  U$ has an open neighbourhood $W$ and $\lambda \in \mathbb{R}$ such that $ F_{P}^{*} \rho_2 = \lambda  \rho_2$ in $W$. This implies that $ F$ does not admit a contact lift to $F^{3}$, even locally.
\end{example}

    We next give an example of a contact lift from a non-simply connected domain that does not extend to the full cylinder.
\begin{example}\label{rem:non-simply-connected}
    For non-simply connected domains $ U$, the argument of \cref{lemma-vertical-invariance-of-horizontal-exactness} to extend a lift to the entire cylinder $ \pi^{-1}( U)$ cannot always work. For instance, on the annulus $A=B(0,2)\setminus \bar{\mathbb{D}} \subset\RR^2$, consider the map $ f\colon A \to \RR^2$, $ f(x)=x/\abs{x}$, and the Heisenberg group $\mathbb{H}_1$. As in \Cref{rem-factorization-of-lifts-filiform}, recall that we have a central extension $\RR\to \mathbb{H}_1 \overset{ \pi_1}{\to}\RR^2$.

    In exponential coordinates, the horizontal lift of the curve $t\mapsto (r\cos(t),r\sin(t))$ to $\mathbb{H}_1$ is the spiral $t \mapsto (r\cos(t),r\sin(t),\frac{1}{2}r^2t)$ for $r >0$. The spirals above and their vertical translations foliate a domain $ \tilde{U}$. More precisely, identify $\mathbb{H}_1$ with $\mathbb{R}^3$ through exponential coordinates. Then $ \tilde{U}$ is the image of the diffeomorphism
    \begin{align*}
        \Psi \colon (1,2) \times \RR \times \left(-\frac{\pi}{2},\frac{\pi}{2} \right) \rightarrow  \tilde{U},
    \end{align*}
    where $(r,t,s) \mapsto ( r \cos t, r \sin t, \frac{1}{2} r^2 t + s )$. By construction, $ \pi_1( \tilde{U}) = A$. Consider the map
    \begin{equation}\label{eq-radial-projection-lift}
         F\colon \tilde{U}\to \mathbb{H}_1,\quad  F \circ \Psi(r,t,s) = (\cos t,\sin t,\frac{1}{2}t).
    \end{equation}
    It holds that $ \pi_1 \circ  F =  f \circ  \pi_1|_{  \tilde{U} }$ and that $ F$ is smooth. Using the notation from \Cref{rem:windingmap} and the formula for $ F \circ \Psi$, we obtain that $\Psi^{*}(  F^{*}Z_{2}^{*} ) = (  F \circ \Psi )^{*}Z_{2}^{*} = 0$. As $\Psi$ is a diffeomorphism, this implies that $ F^{*}Z_{2}^{*} = 0$ and thus $ F$ is a smooth contact lift of $ f$.

    If $ F$ would extend to a contact lift of $ f$ on $ \pi_1^{-1}( A )$, by \Cref{lemma-lipschitz-lift-is-a-homomorphism-on-fibers}, there would exist $k \in \mathbb{R}$ such that $ F(x,y,z+a) =  F( x,y,z ) + k (0,0,a)$ whenever $(x,y,z+a), (x,y,z) \in  \tilde{U}$. Such a $k$ does not exist. Indeed, if $r \in (1,2)$, evaluating $ F$ at $(r, 0, 0 )$ and $(r, 0, s )$ for $0\ne s \in \left(-\frac{\pi}{2}, \frac{\pi}{2}\right)$ leads to $k = 0$. Evaluating $ F$ at $(r, 0, 0)$ and $(r,0, \pi r^2)$ leads to a contradiction with $k = 0$. Thus $ F$ does not admit such an extension.
\end{example}

    As a final remark, we prove how our methods can be used to rewrite the contact equations for a contact map.
\begin{remark}\label{rem:contact-equation}
    The potential $ \alpha_2$ of the central extension 2-cocycle $ \rho_2$ appearing in \cref{thm-lift-iff-potential-pullback-rumin-differential} and our other results is directly related to the contact equation. Consider a smooth map $ f\colon H\supset  U \to G$ between Carnot groups, and consider $ G= G_s$ as an iterated central extension of Carnot groups $  \mathfrak{g}^{[k+1}\to G_{k+1]}\to G_k$ by $ \rho_{k+1}$, starting from the abelianization $ G_1= G/[ G, G]\simeq \RR^r$.

    For each $k=2,\ldots,s$, consider exponential coordinates $ G_k\to  \mathfrak{g}^{[1]}\oplus\dots\oplus \mathfrak{g}^{[k]}$ and let $x_k\colon G_k\to  \mathfrak{g}^{[k]}$ be the degree $k$-component of the coordinate map. Then, for the cocycle $ \rho_{k}\in\Omega^2( G_{k-1};  \mathfrak{g}^{[k]})$, we may consider the potential $ \alpha_{k}\in\Omega^1( G_{k-1};  \mathfrak{g}^{[k]})$ as in \cref{lemma-explift-of-horizontal-curve}. That is, $dx_k =  \theta_k +  \pi_{k}^* \alpha_k$, where $ \theta_k$ is the left-invariant extension of the projection $  \mathfrak{g}^{[1]}\oplus\dots\oplus \mathfrak{g}^{[k]}\to  \mathfrak{g}^{[k]}$, and $ \pi_{k}\colon G_k\to G_{k-1}$ is the projection.
    
    Denote by $\tau_k\colon G\to G_k$ the projection map to the quotient $ G_k$ for $k = 1,\ldots,s$. Then $\tau_{k}\circ f\colon  U\to G_{k}$ is a lift of $\tau_{k-1}\circ f\colon  U\to G_{k-1}$ for $k = 2,\ldots,s$. Computing as in the proof of \cref{prop-lift-iff-potential-pullback-lift-exact}, we see that
    \begin{equation*}
        d(x_{k}\circ \tau_k\circ f) = (\tau_k\circ f)^* \theta_{k} + (\tau_{k-1}\circ f)^* \alpha_k.
    \end{equation*}
    If $ f$ is contact, so is each $\tau_k\circ f$. Then, for a horizontal vector $Y$, we have $(\tau_k\circ f)^* \theta_{k}(Y)=0$ for $k = 2,\ldots,s$. That is, if we denote $\tilde{x}_k=x_{k}\circ \tau_k\colon  G\to  \mathfrak{g}^{[k]}$ and $\tilde{ \alpha}_k = \tau_{k-1}^* \alpha_k\in\Omega^1( G;  \mathfrak{g}^{[k]})$, then
    \begin{equation}\label{eq-contact-equation-using-cocycle-potential}
        d(\tilde{x}_k\circ f)(Y) =  f^*\tilde{ \alpha}_k(Y),
        \quad\text{for every $k = 2,\ldots,s$ and $Y \in   \mathfrak{g}^{[1]}$}.
    \end{equation}
    Conversely, if \eqref{eq-contact-equation-using-cocycle-potential} holds for every horizontal vector $Y$, then 
    \begin{equation*}
        (\tau_k\circ f)^* \theta_{k}(Y)=0,
        \quad\text{for every $k = 2,\ldots,s$ and $Y \in   \mathfrak{g}^{[1]}$}.
    \end{equation*}
    This precisely means that $ f$ maps horizontal vectors to horizontal vectors, i.e., $ f$ is contact. Thus \eqref{eq-contact-equation-using-cocycle-potential} is a form of the contact equations.
\end{remark}

\bibliographystyle{amsalpha}
\bibliography{biblio}
\end{document}